\newcommand{\precdoteq}{%
  \mathrel{\stackunder[0.1ex]{\precdot}{\rule{1.2ex}{0.1ex}}}%
}
\theoremstyle{plain}
\newtheorem{thm}{\protect\theoremname}[section]
\theoremstyle{plain}
\newtheorem{lem}[thm]{\protect\lemmaname}
\newtheorem{prop}[thm]{Proposition}
\newtheorem{cor}[thm]{Corollary}
\newtheorem{fct}[thm]{Fact}
\theoremstyle{definition}
\newtheorem{defn}[thm]{\protect\definitionname}
\newtheorem{exs}[thm]{Examples}
\newtheorem{ex}[thm]{Example}
\newtheorem{question}[thm]{\protect\questionname}
\newtheorem{que}[thm]{Question}
\newtheorem{rem}[thm]{Remark}
\newtheorem{nota}[thm]{Notation}
\crefname{prop}{Proposition}{Propositions}
\Crefname{prop}{Proposition}{Propositions}
\crefname{cor}{Corollary}{Corollaries}
\Crefname{cor}{Corollary}{Corollaries}
\crefname{fct}{Fact}{Facts}
\Crefname{fct}{Fact}{Facts}
\crefname{thm}{Theorem}{Theorems}
\Crefname{thm}{Theorem}{Theorems}
\providecommand{\set}[2]{\{#1 : #2\}}
\providecommand{\definitionname}{Definition}
\providecommand{\lemmaname}{Lemma}
\providecommand{\questionname}{Question}
\providecommand{\theoremname}{Theorem}
\providecommand{\theoremname}{Remark}
\DeclareMathOperator{\VC}{VC}
\newcommand{\dom}
{\mathord{\operatorname{dom}}}
\DeclareMathOperator{\Min}{MIN}
\newcommand{\predom}{\mathord{\operatorname{predom}}}
\DeclareMathOperator{\fin}{fin}
\DeclareMathOperator{\VCcof}{VCcof}
\DeclareMathOperator{\cof}{cof}
\DeclareMathOperator{\otp}{otp}
\DeclareMathOperator{\im}{Im}
\DeclareMathOperator{\cl}{cl}
\DeclareMathOperator{\Inf}{Inf}
\newcommand{\Ord}{\operatorname{\mathbf{Ord}}}
\newcommand{\Ff}{\mathcal{F}}
\newcommand{\precdot}{\prec\mathrel{\mkern-5mu}\mathrel{\cdot}}
\newcommand{\preceqdot}{\mathrel{\mathpalette\pr@ceqd@t\relax}}
\newcommand{\pr@ceqd@t}[2]{%
  \begingroup
  \sbox\z@{$#1\prec$}\sbox\tw@{$#1\preceq$}%
  \dimen@=\dimexpr\ht\tw@-\ht\z@\relax
  {\preceq}%
  \mkern-5mu
  \raisebox{\dimen@}{$\m@th#1\cdot$}%
  \endgroup
}
\author{Omer Ben-Neria, Itay Kaplan, George Peterzil}
\title{Cofinal families of finite VC-dimension}
\thanks{\\
The first author would like to thank the Israel Science Foundation for partial support of this research (Grant 1302/23).\\
The second author would like to thank the Israel Science Foundation
for partial support of this research (Grant no. 804/22). }
\address{Omer Ben Neria \\
The Hebrew University of Jerusalem\\
Einstein Institute of Mathematics \\
Edmond J. Safra Campus, Givat Ram\\
Jerusalem 91904, Israel}
\email{omer.bn@math.huji.ac.il}
\urladdr{https://math.huji.ac.il/~omerbn/}
\address{Itay Kaplan \\
The Hebrew University of Jerusalem\\
Einstein Institute of Mathematics \\
Edmond J. Safra Campus, Givat Ram\\
Jerusalem 91904, Israel}
\email{kaplan@math.huji.ac.il}
\urladdr{https://itaykaplan.huji.ac.il/}
\address{George Peterzil \\
The Hebrew University of Jerusalem\\
Einstein Institute of Mathematics \\
Edmond J. Safra Campus, Givat Ram\\
Jerusalem 91904, Israel}
\email{george.peterzil@mail.huji.ac.il}
\urladdr{https://sites.google.com/mail.huji.ac.il/gpeterzil
}
\subjclass[2020]{03E35, 03E55, 05D99}
\begin{document}
\maketitle
\begin{abstract}
    Given infinite cardinals $\theta\leq \kappa$, we ask for the minimal VC-dimension of a cofinal family $\Ff\subseteq[\kappa]^{<\theta}$. We show that for $\theta=\omega$ and $\kappa=\aleph_n$ it is consistent with ZFC that there exists such a family of VC-dimension $n+1$, which is known to be the lower bound. For $\theta>\omega$ we answer this question completely, demonstrating a strong dichotomy between the case of singular and regular $\theta$. We furthermore answer some relative and generalized versions of the above question for singular $\theta$, and answer a related question which appears in \cite{BBNKS}.
\end{abstract}
\section{Introduction}

Suppose that $X$ is an infinite set of size $\kappa$ and that $\Ff$ is a family of subsets, each of cardinality at most $\theta$ for $\theta\leq \kappa$. Consider two possible properties: 
\begin{enumerate}
    \item \label{enu:cofinal} The family $\Ff$ is cofinal: every set of cardinality $<\theta$ is contained in a member of $\Ff$.
    \item \label{enu:small VC} The family $\Ff$ has finite 
    VC-dimension, meaning that there is some $n<\omega$ such that given any subset $s$ of $X$ of size $n$, there is some $t\subseteq s$ such that $t$ is not of the form $s \cap A$ for some $A\in \Ff$, see \cref{VCdim}. 
\end{enumerate}

Observe that (\ref{enu:cofinal}) pulls $\Ff$ towards having many sets while (\ref{enu:small VC}) pulls $\Ff$ towards having few sets. This precise tension is the center of this paper. Generalizing (\ref{enu:cofinal}) we may introduce a third cardinal $\lambda\leq \theta$ and ask that $\Ff$ is $\lambda$-cofinal, meaning that every set of size $<\lambda$ is contained in a member of $\Ff$. This leads us to:



\begin{que}
\label{main}
    Given cardinals $\lambda\leq\theta\leq\kappa$, what is the minimal VC-dimension of a $\lambda$-cofinal family $\Ff\subseteq[\kappa]^{<\theta}$ (where $[\kappa]^{<\theta}$ is the set of all subsets of $\kappa$ of cardinality $<\theta$)?
\end{que}

Consider the family of proper initial segments of $\omega$. Every finite subset of the natural numbers
is contained in such an initial segment, and it can be seen to have VC-dimension 1. Thus, we have that for $\lambda=\kappa=\theta=\omega$, the answer to \cref{main} is 1. 

In \cite{BBNKS}, Bays, Simon and the first two authors, motivated by questions in model theory, considered a version of \cref{main}. Generalizing the example of initial segments of $\omega$, they proved:

\begin{fct}{\cite[Theorem 3.8]{BBNKS}}
\label{omega_1}
    There is an $\omega$-cofinal family $\Ff\subseteq[\omega_1]^{<\omega}$ with $\VC(\Ff)=2$.
\end{fct}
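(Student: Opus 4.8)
The plan is to realize $\Ff$ as a family of finite ``truncated initial segments'' attached to a system of enumerations of the countable ordinals, and then to read off the VC-dimension from a trace computation. Concretely, for each $\alpha<\omega_1$ I would fix an injection $e_\alpha\colon\alpha+1\to\omega$ (an enumeration of the countable set $\alpha+1$) with $e_\alpha(\alpha)=0$, and set
\[
A_{\alpha,n}=\{\gamma\le\alpha : e_\alpha(\gamma)\le n\},\qquad \Ff=\{A_{\alpha,n} : \alpha<\omega_1,\ n<\omega\}.
\]
Each $A_{\alpha,n}$ is finite since $e_\alpha$ is injective, and $\Ff$ is $\omega$-cofinal: given finite $F$, take $\alpha=\max F$ and $n=\max\{e_\alpha(\gamma):\gamma\in F\}$, so $F\subseteq A_{\alpha,n}$. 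The normalization $e_\alpha(\alpha)=0$ makes $\VC(\Ff)\ge 2$ immediate, since for any $a<b$ the four traces $\emptyset,\{a\},\{b\},\{a,b\}$ all occur (e.g. $\{b\}$ is the trace of $A_{b,0}$). All the work is in showing $\VC(\Ff)\le 2$, which is where the choice of the $e_\alpha$ matters.

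I would reduce this to a combinatorial statement via a trace analysis. It suffices to show no triple $\gamma_0<\gamma_1<\gamma_2$ is shattered. For fixed $\alpha$, the trace $A_{\alpha,n}\cap\{\gamma_0,\gamma_1,\gamma_2\}$ runs, as $n$ grows, through the chain of $e_\alpha$-initial segments of $\{\gamma_0,\gamma_1,\gamma_2\}\cap[0,\alpha]$. Splitting the admissible $\alpha$ into the ranges $[\gamma_0,\gamma_1)$, $[\gamma_1,\gamma_2)$ and $[\gamma_2,\omega_1)$, one checks that $\emptyset,\{\gamma_0\},\{\gamma_1\},\{\gamma_2\},\{\gamma_0,\gamma_1\}$ and the full set are \emph{always} realized (the singletons because $\gamma_i$ is the $e_{\gamma_i}$-least element, using $e_\alpha(\alpha)=0$). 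Hence the triple is shattered precisely when both ``non-convex'' pairs $\{\gamma_0,\gamma_2\}$ and $\{\gamma_1,\gamma_2\}$ are realized, and these can occur only from members with $\alpha\ge\gamma_2$: $\{\gamma_0,\gamma_2\}$ appears iff some $\alpha\ge\gamma_2$ has $\gamma_1$ as the $e_\alpha$-largest of the triple, and $\{\gamma_1,\gamma_2\}$ appears iff some $\alpha\ge\gamma_2$ has $\gamma_0$ as the $e_\alpha$-largest. So it is enough to build the system so that for \emph{every} triple at least one of these two ``odd-one-out'' patterns never occurs for any $\alpha\ge\gamma_2$.

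This is exactly the configuration that makes the ball family of a tree (equivalently, ultrametric balls) have VC-dimension $2$: for three branches the two pairs separating the branch-wise odd one out are never balls. The plan is thus to construct $\langle e_\alpha\rangle$ by transfinite recursion on $\alpha$ so that $\Ff$ imitates such a tree -- fixing a tree order, ladder system, or minimal walks on $\omega_1$, and enumerating each $\alpha+1$ so that the $e_\alpha$-order of any fixed triple stabilizes into an interval-like pattern whose forbidden pair persists for all $\alpha\ge\gamma_2$. The hard part, and the main obstacle, is that the trace of a triple gathers contributions from \emph{all} $\alpha\ge\gamma_2$, so a later $e_\alpha$ can resurrect a pair one has tried to forbid. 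Moreover no uniform rule can prevent this: the natural candidates (``the $\le$-largest point of a set is never $e_\alpha$-least'', or ``every point is an $e_\alpha$-minimum of its $\le$-predecessors or of its $\le$-successors'') amount to enumerating $\alpha+1$ in a $\le$-unimodal, decreasing-then-increasing, fashion, which is impossible once $\alpha$ is infinite, as a limit ordinal has infinitely many predecessors but only finitely many can receive an $e_\alpha$-value below its own. Consequently the forbidden pair must be chosen \emph{adaptively} per triple and kept globally consistent through the recursion; securing this coherence at limit stages is the heart of the argument. Finiteness, cofinality, and the lower bound then combine to give $\VC(\Ff)=2$.
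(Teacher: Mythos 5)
Your setup is sound and is essentially the standard one: the sets $A_{\alpha,n}$ are exactly the closed $2$-initial segments of a $2$-ordering system on $\omega_1$ in the sense of \cref{def:n-ordering system} (the usual order $\precdot_\emptyset$ on $\omega_1$ together with, for each $\alpha$, the order of type at most $\omega$ on $\alpha$ induced by $e_\alpha$), and your trace analysis correctly reduces $\VC(\Ff)\le 2$ to the statement that for every triple $\gamma_0<\gamma_1<\gamma_2$ at least one of the two patterns ``$\gamma_1$ is $e_\alpha$-last among the triple'' and ``$\gamma_0$ is $e_\alpha$-last among the triple'' never occurs for any $\alpha\ge\gamma_2$. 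Cofinality, finiteness of the members, and the lower bound $\VC(\Ff)\ge 2$ are all fine.

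The problem is that the argument stops exactly where the theorem begins: everything you have written is routine, and the entire content of the result is the construction of a coherent family $\langle e_\alpha\rangle_{\alpha<\omega_1}$ satisfying your triple condition, which you explicitly defer (``securing this coherence at limit stages is the heart of the argument''). You correctly observe that no uniform local rule can work and that a later $e_\alpha$ can resurrect a forbidden pair, but you supply neither an invariant to carry through the transfinite recursion nor a mechanism for the limit stages, so there is no proof here. For comparison, the construction in \cref{Theorem: closed initial segments} (which produces an even stronger such system on $\omega_1$) shows what is actually required: one carries the inductive hypothesis that for every \emph{finite $\precdot$-closed set} $S$ and distinct $\beta,\gamma\notin S$, not both $S\cup\{\beta\}\vdash\gamma$ and $S\cup\{\gamma\}\vdash\beta$ --- your triple condition is the special case $S=\emptyset$, but the strengthening to arbitrary finite closed $S$ is what makes the induction close --- and at a limit $\delta$ one writes $\delta$ as an increasing union of finite closed sets $S_n$ and enumerates $S_{n+1}\setminus S_n$ by repeatedly choosing an element minimal in the partial order ``$x\preceq y$ iff $S\cup\{y\}\vdash x$'' (antisymmetry of which is exactly the inductive hypothesis), so that every initial segment of the new enumeration is closed. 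Without an invariant of this kind and a limit-stage argument, the plan is not yet a proof. Finally, the ultrametric/tree picture should be treated only as motivation: an ultrametric on $\omega_1$ all of whose balls are finite cannot exist (the balls around a fixed point form a chain of finite sets whose union would have to be $\omega_1$), so the imitation cannot be literal.
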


The following result in their paper demonstrates that the VC-dimension for such families cannot be smaller, and more generally gives a lower bound for a possible answer to \cref{main}.

\begin{fct}{\cite[Proposition 3.3 and Remark 3.7]{BBNKS}}
\label{lower}
    For $n<\omega$, a cardinal $\kappa$ and a family $\Ff\subseteq[\kappa^{+n}]^{<\kappa}$ which is $(n+2)$-cofinal we have $\VC(\Ff)\geq n+1$. 
\end{fct}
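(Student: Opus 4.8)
The plan is to prove the statement by induction on $n$, \emph{strengthening} it so that the inductive hypothesis also records a largeness property of the witnessing members; this extra bookkeeping is exactly what makes the inductive step close. The base case $n=0$ is direct: since every member of $\Ff\subseteq[\kappa]^{<\kappa}$ has size $<\kappa=|\kappa|$, no member equals $\kappa$, so we may fix $A_0\in\Ff$ and a point $x\notin A_0$; by $2$-cofinality $\{x\}$ is contained in some $B\in\Ff$, so $x\in B$ while $x\notin A_0$, and both traces $\emptyset$ and $\{x\}$ occur on $\{x\}$. Hence $\{x\}$ is shattered and $\VC(\Ff)\geq 1$.

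For the inductive step ($n\geq 1$), fix an initial segment $Y\subseteq\kappa^{+n}$ of order type $\kappa^{+(n-1)}$ and pass to the trace family $\mathcal{G}=\set{A\cap Y}{A\in\Ff}$. Since $\Ff$ is $(n+2)$-cofinal, every $u\subseteq Y$ with $|u|\leq n$ is contained in a member of $\Ff$, so $\mathcal{G}$ is $(n+1)$-cofinal on $Y\cong\kappa^{+(n-1)}$. By the inductive hypothesis (in its strengthened form) we obtain a set $s\subseteq Y$ of size $n$ that is shattered by $\mathcal{G}$, hence by $\Ff$, and that additionally satisfies the largeness invariant described below. The remaining task is to find a single extra point $x^\ast$ such that $s\cup\{x^\ast\}$, of size $n+1$, is again shattered.

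The decisive step is the choice of $x^\ast$. For each $t\subseteq s$ set $U_t=\bigcup\set{A\in\Ff}{A\cap s=t}$ and $I_t=\bigcap\set{A\in\Ff}{A\cap s=t}$; note $|I_t|<\kappa$, as $I_t$ is contained in any single trace-$t$ member. Suppose $x^\ast\in\bigcap_{t\subseteq s}U_t$ while $x^\ast\notin s\cup\bigcup_{t\subseteq s}I_t$. Then for every $t\subseteq s$, the relation $x^\ast\in U_t$ supplies a member of trace $t$ on $s$ that contains $x^\ast$ (realizing the trace $t\cup\{x^\ast\}$ on $s\cup\{x^\ast\}$), whereas $x^\ast\notin I_t$ supplies a member of trace $t$ on $s$ avoiding $x^\ast$ (realizing $t$). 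Thus $s\cup\{x^\ast\}$ is shattered. The forbidden set $s\cup\bigcup_{t\subseteq s}I_t$ has size $<\kappa$, so such an $x^\ast$ exists the moment $\bigcap_{t\subseteq s}U_t$ is suitably large.

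This is where the strengthened hypothesis enters and where the main obstacle lies. The invariant I carry along is that the shattered set can be chosen so that each $U_t$ is co-small in $\kappa^{+n}$, i.e.\ $|\kappa^{+n}\setminus U_t|<\kappa^{+n}$; since $\kappa^{+n}$ is regular, the finitely many complements then union to a set of size $<\kappa^{+n}$, so $\bigcap_{t\subseteq s}U_t$ is co-small and in particular meets the complement of the small forbidden set. The difficulty is propagating this invariant from $s$ to $s\cup\{x^\ast\}$: adjoining $x^\ast$ refines each trace $t$ into the two traces $t$ and $t\cup\{x^\ast\}$, and I must guarantee that for a suitable $x^\ast$ \emph{both} refined trace-unions stay co-small. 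Co-smallness of the $x^\ast$-avoiding refinements follows from a Fubini-style count — a member of size $<\kappa$ can force only $<\kappa$ points, so at most $<\kappa^{+n}$ candidate points $x^\ast$ can fail, using that $\kappa^{+n}$ is regular and strictly above $\kappa$. The co-smallness of the $x^\ast$-containing refinements is the genuinely hard exclusion step, and it is precisely here that the full strength of $(n+2)$-cofinality, one more than is needed merely to shatter $s$, must be spent to guarantee trace-$t$ members that contain a prescribed pair of points while still excluding $s\setminus t$. Once the invariant is re-established, $s\cup\{x^\ast\}$ is a shattered set of size $n+1$ carrying the largeness property, completing the induction and yielding $\VC(\Ff)\geq n+1$.
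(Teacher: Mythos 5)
The paper itself does not prove this statement---it is imported verbatim from \cite{BBNKS}---so I am assessing your argument on its own terms.

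Your scaffolding (induct on $n$; obtain a shattered $s$ of size $n$; then find one additional point $x^\ast$ lying in every $U_t=\bigcup\set{A\in\Ff}{A\cap s=t}$ but outside $s\cup\bigcup_t I_t$) is a correct reduction: if such an $x^\ast$ exists, $s\cup\{x^\ast\}$ is indeed shattered, and the forbidden set really does have size $<\kappa$. The problem is that the entire content of the theorem is concentrated in showing that $\bigcap_{t\subseteq s}U_t$ is large, and at exactly that point you substitute an invariant (``each $U_t$ is co-small in $\kappa^{+n}$'') whose verification you explicitly defer as ``the genuinely hard exclusion step'' and never supply. Cofinality is a pure containment hypothesis: it produces a member containing a prescribed small set but gives no control over which points of $s\setminus t$ that member avoids, so nothing in the hypotheses directly yields co-smallness of $U_t$ for proper $t\subsetneq s$. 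Worse, the strengthened statement you propose to carry through the induction is false as stated: for $\Ff=[\kappa^{+n}]^{\leq n+1}$, which is $(n+2)$-cofinal, the only member with full trace on a set $s$ of size $n+1$ is $s$ itself, so $U_s=s$ is as far from co-small as possible; and already in the base case the $2$-cofinal family $\{\emptyset\}\cup\set{\{y\}}{y<\kappa}$ has $U_{\{x\}}=\{x\}$ for every $x$. So the induction cannot close in the form described.

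There is a second, independent break in the propagation of the invariant. You obtain $s$ by applying the strengthened hypothesis to the trace family $\mathcal{G}$ on an initial segment $Y$ of order type $\kappa^{+(n-1)}$. Whatever largeness that hypothesis delivers is largeness \emph{inside} $Y$, i.e.\ complements of size $<\kappa^{+(n-1)}$ within a set that itself has size $\kappa^{+(n-1)}$; but $x^\ast$ must be found in $\kappa^{+n}$, where every subset of $Y$ is small. Co-smallness of $U_t$ in $\kappa^{+n}$ is a statement about members of $\Ff$ with trace $t$ on $s$ covering almost all points \emph{outside} $Y$, and no such information can come from an inductive hypothesis applied inside $Y$. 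What is actually needed is a mechanism that manufactures exclusion for free---for instance, fixing a choice function $u\mapsto F(u)\in\Ff$ with $u\subseteq F(u)$ and picking the points of the would-be shattered set along the club of ordinals closed under $F$, so that covering witnesses for subsets of earlier points automatically avoid later points. An ingredient of this kind is what the cited proof relies on, and it is the piece missing from your proposal.
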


\begin{rem}
    \cref{lower} does not appear exactly in this form in \cite{BBNKS}: there, the family is required to cover every finite subset of $\kappa^{+n}$, not just those of size $n+1$. However, the proof there gives this stronger formulation. 
\end{rem}

In this paper, we attempt to continue this project. In particular, we ask when the bound of \cref{lower} is optimal. The following is the first result proved.

\begin{thm}[\cref{thm:main}(\ref{enu:regular},\ref{enu:singular})]
\label{uncountable}
    Fix infinite cardinals $\lambda\leq \theta\leq \kappa$.
    \begin{enumerate}
        \item  If $\theta$ is regular and uncountable, then there is a $\lambda$-cofinal $\Ff\subseteq[\kappa]^{<\theta}$ with $\VC(\Ff)\leq n$ if and only if $\kappa<\theta^{+n}$.
        \item If $\theta$ is singular and $\cof(\theta)<\lambda$, then every $\lambda$-cofinal $\Ff\subseteq[\kappa]^{<\theta}$ has $\VC(\Ff)=\infty$.
    \end{enumerate}
\end{thm}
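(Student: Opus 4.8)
The argument splits along the two parts of the statement. For the \textbf{only-if direction of (1)} I would argue by contraposition: suppose $\kappa\geq\theta^{+n}$ and let $\Ff\subseteq[\kappa]^{<\theta}$ be $\lambda$-cofinal. Restricting to the initial segment $\theta^{+n}$, the trace family $\Ff'=\set{A\cap\theta^{+n}}{A\in\Ff}$ is $\lambda$-cofinal in $[\theta^{+n}]^{<\theta}$, and $\VC(\Ff')\leq\VC(\Ff)$ since passing to traces on a subset can only decrease the VC-dimension. As $\lambda$ is infinite, $\Ff'$ is in particular $(n+2)$-cofinal, so \cref{lower}, applied with $\theta$ in place of the cardinal named $\kappa$ there, gives $\VC(\Ff')\geq n+1$ and hence $\VC(\Ff)\geq n+1$. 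Thus $\VC(\Ff)\leq n$ forces $\kappa<\theta^{+n}$.

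For the \textbf{converse of (1)}, since a $\theta$-cofinal family is $\lambda$-cofinal for every $\lambda\leq\theta$, it suffices to produce a $\theta$-cofinal $\Ff\subseteq[\kappa]^{<\theta}$ with $\VC(\Ff)\leq n$ whenever $\kappa\leq\theta^{+(n-1)}$. I would prove, by induction on $m\geq0$, that for every $\kappa\leq\theta^{+m}$ there is a $\theta$-cofinal $\Ff\subseteq[\kappa]^{<\theta}$ with $\VC(\Ff)\leq m+1$. The base case $m=0$ uses regularity of $\theta$: for $\kappa<\theta$ take $\Ff=\{\kappa\}$ (VC-dimension $0$), and for $\kappa=\theta$ take the initial segments $\set{[0,\alpha)}{\alpha<\theta}$, which are $\theta$-cofinal because every set of size $<\theta$ is bounded, and have VC-dimension $1$. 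For the step, write $\kappa=\mu^{+}$ with $\mu=\theta^{+m}$ and fix by induction a $\theta$-cofinal $\Hh\subseteq[\mu]^{<\theta}$ with $\VC(\Hh)\leq m+1$; since $\mu^{+}$ is regular every $s\in[\mu^{+}]^{<\theta}$ is bounded and every $\delta<\mu^{+}$ has $|\delta|\leq\mu$. The idea is to combine, inside each $\delta$, a transported copy of $\Hh$ with the VC-$1$ structure of the initial segments of $\mu^{+}$, so as to add exactly one dimension and reach $\VC\leq m+2$. The main obstacle is that the naive union $\bigcup_{\delta}\set{b_\delta[B]}{B\in\Hh}$ over arbitrary bijections $b_\delta\colon\mu\to\delta$ has uncontrolled VC-dimension, because for a fixed finite $s$ the unrelated maps $b_\delta^{-1}\restriction s$ (for $\delta>\sup s$) contribute trace families that can combine to shatter $s$. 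I would therefore choose the $b_\delta$ as a \emph{coherent} sequence, so that the traces appearing above $\sup s$ vary only along the one-dimensional parameter $\delta$ and assemble into a family whose VC-dimension exceeds that of $\Hh$ by at most one. Constructing such a coherent sequence in ZFC and verifying that it yields precisely $\VC\leq m+2$ is the technical heart; the instance $m=1$ is exactly \cref{omega_1}.

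For \textbf{part (2)}, set $\rho=\cof(\theta)<\lambda$ and fix a continuous increasing sequence cofinal in $\theta$ with intervals $I_i$, so that $|I_i|\to\theta$. Restricting to the initial segment $\theta$ as above (again using monotonicity of $\VC$ under traces), we may assume $\kappa=\theta$, and it suffices to exhibit, for each $d<\omega$, a shattered $d$-element set. The two enabling facts are: (a) since $\lambda>\rho$, every set of size $\leq\rho$—in particular every finite set—is contained in some member of $\Ff$; and (b) for finite $u\subseteq\theta$ the closure $C(u)=\bigcap\set{A\in\Ff}{u\subseteq A}$ has size $<\theta$, as it is contained in a single cover of $u$. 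Consequently, for fixed $u$ and all sufficiently large $i$, all but $<|I_i|$ many points $p\in I_i$ admit a cover of $u$ avoiding $p$, so points chosen from high intervals can be excluded from prescribed covers.

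The construction then proceeds by recursion, building points $x_1,\dots,x_d$ from increasingly high intervals together with, for every $t\subseteq\{1,\dots,d\}$, a member $A_t\in\Ff$ realizing the trace $\set{x_k}{k\in t}$: inclusion patterns come from (a) by covering the relevant finite set, and a newly added high point is kept out of the finitely many previously chosen covers by (b), since those covers have size $<\theta$ while its interval has size cofinal in $\theta$. The \emph{main obstacle}, and the real content of (2), is the simultaneous exclusion of the already-chosen ``gap'' points from the covers $A_t$: covering a finite set gives no control over which further points the cover meets, and simple examples show that pointwise-avoidable points need not be jointly avoidable. I would resolve this by choosing the points sufficiently generically—iterating (a) and (b), and using that $\cof(\theta)=\rho<\lambda$ permits covering size-$\leq\rho$ ``spreading'' sets while every member still misses almost all of each high interval—so that the finitely many relevant covers become independent on $\{x_1,\dots,x_d\}$. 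Making this genericity precise, e.g.\ via a free-set or elementary-submodel argument, is where I expect the difficulty to lie; once it is in place, the reduction to $\kappa=\theta$ yields $\VC(\Ff)=\infty$ for all $\kappa\geq\theta$.
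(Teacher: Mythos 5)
Your lower-bound direction of (1) is correct and is exactly the paper's argument (restrict to $\theta^{+n}$, then apply \cref{monotonicity} and \cref{lower1}). The converse of (1), however, is not proved: you correctly diagnose that transporting a fixed family on $\mu=\theta^{+m}$ into each $\delta<\mu^+$ by arbitrary bijections destroys the VC bound, but the ``coherent sequence of bijections'' that is supposed to repair this is precisely the missing construction --- you give no candidate for it and no argument that one exists in ZFC, and this is the entire content of the hard direction. The paper does not patch the transport idea; it replaces it by an $(n+1)$-ordering system (\cref{def:n-ordering system}), a recursively compatible system of well-orders $\precdot_s$ for $s\in[\theta^{+n}]^{\leq n}$ with order types bounded by $(\theta^{+n},\dots,\theta)$ (\cref{exists}), and takes $\Ff$ to be the $\precdot$-closed sets of size $<\theta$ (\cref{def:closed sets}). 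The bound $\VC(\Ff)\leq n+1$ then falls out of \cref{Lemma: inductive maximum}: every $(n+2)$-set $s$ has a canonical $s'\in[s]^{n}$ whose two remaining points are $\precdot_{s'}$-comparable, so $s$ is not shattered (\cref{prop:OVC}). Cofinality is obtained by closing a given $A$ under $\precdot_s$-initial segments in $\omega$ stages, and it is here --- a point your sketch never isolates --- that both regularity \emph{and uncountability} of $\theta$ are used to keep $\bigcup_{k<\omega}A_k$ of size $<\theta$ (\cref{regular}); this is exactly why the analogous statement for $\theta=\omega$ is only a consistency result in the paper, obtained by forcing, and why your remark that the instance $m=1$ ``is exactly'' the $\omega_1$ fact conflates two genuinely different cases.

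For (2), the obstacle you name is real --- a cover of $\set{x_k}{k\in t}$ gives no control over which excluded points it meets, and pointwise avoidability does not yield joint avoidability --- but your proposed remedy (``choose the points sufficiently generically,'' via a free-set or elementary-submodel argument) is not carried out and is not obviously available, so the shattering construction does not go through as written. The paper avoids direct shattering entirely. It first proves \cref{restrict}: if $\cof(\theta)<\lambda$, any $\lambda$-cofinal $\Ff\subseteq[\theta]^{<\theta}$ contains a $\lambda$-cofinal subfamily $\Ff_0\subseteq[\theta]^{<\kappa}$ for a \emph{single} $\kappa<\theta$ --- otherwise one picks, for each $\alpha<\cof(\theta)$, a set $A_\alpha$ of size $\leq\cof(\theta)$ all of whose covers have size $>\kappa_\alpha$, and then covers $\bigcup_\alpha A_\alpha$ (which has size $\leq\cof(\theta)<\lambda$) to reach a contradiction. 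Since $\theta$ is a limit cardinal, $\kappa^{+m}<\theta$ for every $m$, so restricting $\Ff_0$ to $\kappa^{+m}$ and citing \cref{lower1} gives $\VC(\Ff)\geq m+1$ for all $m$ (\cref{singular}). Your ingredient (a) --- that $\cof(\theta)<\lambda$ lets one cover sets of size $\cof(\theta)$ --- is the right one, but it must be applied to a spreading family witnessing unboundedness of cover sizes, not to the would-be shattered set itself; all the actual shattering is then delegated to the already-known \cref{lower1}.
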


In light of \cref{omega_1,lower,uncountable}, the following two questions are therefore natural:

\begin{que}${}$
\begin{enumerate}
    \item What can we say for $\theta=\omega$ and $\kappa=\omega_n$ for $n>1$?
    \item What can we say for singular $\theta$, $\kappa=\theta^{+n}$ and $\lambda\leq \cof(\theta)$?
\end{enumerate}    
\end{que}

Regarding those two questions, we get the following consistency results.

\begin{thm}[\cref{thm:main}(\ref{enu:omega},\ref{enu:measurable})]
\label{consistency}${}$
    \begin{enumerate}
        \item For every $n<\omega$, it is consistent with ZFC that there is an $\omega$-cofinal $\Ff\subseteq[\omega_n]^{<\omega}$ with $\VC(\Ff)=n+1$.
        \item Relative to the existence of a measurable cardinal, it is consistent that   there is a singular cardinal $\theta$ of countable cofinality such that for every $n<\omega$ there is some $\omega$-cofinal $\Ff\subseteq[\theta^{+n}]^{<\theta}$ such that $\VC(\Ff)=n+1$.
    \end{enumerate}
\end{thm}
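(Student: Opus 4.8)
The plan is to derive both parts from a single constructive engine, applied recursively in $n$ and fed different combinatorial inputs. The engine is a \emph{pullback-plus-top-point} operation: given a cofinal family $\mathcal{G}$ on an ordinal $\mu$ with $\VC(\mathcal{G})\le d$ together with a system of injections $\langle e_\alpha:\alpha\to\mu \mid \mu\le\alpha<\mu^+\rangle$, I would set
\[
\mathcal{G}^{+}=\set{e_\alpha^{-1}[A]\cup\{\alpha\}}{\mu\le\alpha<\mu^+,\ A\in\mathcal{G}}.
\]
Cofinality on $\mu^+$ is immediate: a finite set $s$ with $\max s=\beta$ has $s\setminus\{\beta\}\subseteq\beta$, so $e_\beta[s\setminus\{\beta\}]$ is covered by some $A\in\mathcal{G}$ and hence $s\subseteq e_\beta^{-1}[A]\cup\{\beta\}\in\mathcal{G}^{+}$. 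Starting from the initial segments of $\omega$ (VC-dimension $1$) and iterating this operation $n$ times then produces an $\omega$-cofinal family on $\omega_n$; the lower bound \cref{lower} shows $n+1$ is the smallest VC-dimension one could hope for, so the whole difficulty is to keep $\VC(\mathcal{G}^{+})\le d+1$ at each step. For the singular case the base family on $[\theta]^{<\theta}$ is free in $\mathrm{ZFC}$ — since $\cof(\theta)=\omega$, every finite subset of $\theta$ is bounded, so the proper initial segments $[0,\theta_k)$ form a chain of VC-dimension $1$ covering all finite sets — and the same recursion would lift it to $\theta^{+n}$.

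The key step, and the main obstacle, is the upper bound $\VC(\mathcal{G}^{+})\le d+1$. Suppose $s=\{x_0<\dots<x_{d+1}\}$ were shattered. Any member $e_\alpha^{-1}[A]\cup\{\alpha\}$ lies below $\alpha+1$ and meets $s$ in its top coordinate $x_{d+1}$ only when either $\alpha=x_{d+1}$, or $\alpha>x_{d+1}$ and $x_{d+1}\in e_\alpha^{-1}[A]$; splitting the $2^{d+2}$ traces of $s$ accordingly, I would reduce a shattering of the lower $d+1$ points $\{x_0,\dots,x_d\}$ to one realized by pullbacks $e_\alpha^{-1}[\mathcal{G}]$. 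The point is that distinct witnesses use distinct $\alpha$'s with \emph{a priori} unrelated enumerations $e_\alpha$, and this freedom is exactly what could manufacture an extra shattered coordinate. To forbid it I would demand that the $e_\alpha$ be coherent on initial segments — that for every finite $u\subseteq\mu^+$ the restrictions $e_\alpha\restriction u$ for $\alpha>\max u$ assume only finitely many values (a $\square$/morass-type demand) — so that all relevant witnesses may be taken to agree on $\{x_0,\dots,x_d\}$, collapsing the analysis to a single copy of $\mathcal{G}$ and contradicting $\VC(\mathcal{G})\le d$. Verifying that such coherence survives the recursion (the enumerations at level $n$ must cohere while refining those used at level $n-1$) is where the argument is most delicate.

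For \cref{consistency}(1) I would secure these coherent systems from a model in which the appropriate fine-structural principles hold — e.g. under $\mathrm{GCH}$ together with $\square_{\omega_k}$ for $k<n$ (in particular in $L$, or after a preliminary forcing) — where gap-$1$ morasses supply injections $e_\alpha:\alpha\to\omega_k$ that cohere on initial segments at every level simultaneously. Running the recursion inside such a model yields the desired $\omega$-cofinal $\Ff\subseteq[\omega_n]^{<\omega}$ with $\VC(\Ff)=n+1$; the first step $\omega\to\omega_1$ is already free by \cref{omega_1}, so only the higher levels call on the principle. Since these coherence principles are consistent but not provable in $\mathrm{ZFC}$ (they fail under sufficiently strong hypotheses), this yields a consistency result rather than a theorem of $\mathrm{ZFC}$. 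An alternative I would keep in reserve is to force the family directly by finite approximations, preserving cardinals by a $\Delta$-system argument under $\mathrm{GCH}$ while building the non-shattering constraints into the conditions; there the tension is that cofinality forces sets in while the VC-bound forbids them, so an amalgamation lemma for conditions is the heart of the matter.

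For \cref{consistency}(2) the boundary case $\cof(\theta)=\omega$ falls just outside the dichotomy of \cref{uncountable}(2), and I would obtain it from a measurable. Starting with a measurable $\theta$ carrying a normal measure, I would singularize it to cofinality $\omega$ (e.g. by Prikry forcing), and use the homogeneity furnished by the measure — the Prikry sequence behaving as indiscernibles for the relevant structure — to produce coherent systems of enumerations at every successor $\theta^{+k}$ \emph{simultaneously}, as the recursion requires. Because one and the same measure serves all $k$ at once, a single $\theta$ works for every $n$, and the resulting families again match the lower bound \cref{lower} with $\VC=n+1$. The principal difficulty here is preservation: showing that the coherent/homogeneous structure used to bound shattering is not destroyed by the singularizing forcing and transfers uniformly to all successors $\theta^{+n}$, which is precisely why large-cardinal strength (and not merely the morass-type input of part~(1)) is needed at a singular cardinal.
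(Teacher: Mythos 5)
The central step of your plan --- the bound $\VC(\mathcal{G}^{+})\le d+1$ --- has a genuine gap. To shatter $s=\{x_0<\dots<x_{d+1}\}$ one needs all $2^{d+1}$ sets $t\cup\{x_{d+1}\}$ with $t\subseteq u:=\{x_0,\dots,x_d\}$ to be traces; each witness has top point $\alpha_t\ge x_{d+1}$, and for $\alpha_t>x_{d+1}$ its trace on $u$ is a trace of the pullback $e_{\alpha_t}^{-1}[\mathcal{G}]$. Your coherence demand only guarantees that the restrictions $e_\alpha\restriction(u\cup\{x_{d+1}\})$ for $\alpha>x_{d+1}$ take \emph{finitely many} values, so the $2^{d+1}$ witnesses may be spread over several distinct restrictions; what follows is only that $u$ is shattered by a \emph{finite union} of copies of $\mathcal{G}$ (plus the copy at $\alpha=x_{d+1}$). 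A union of finitely many families of VC-dimension $d$ can have VC-dimension strictly larger than $d$, so no contradiction with $\VC(\mathcal{G})\le d$ follows, and the claim that the witnesses ``may be taken to agree'' on $u$ is unjustified. Nor can you strengthen coherence to actual agreement of all $e_\alpha\restriction u$ for $\alpha>\max u$: already for singletons that would define an injection of $\mu^+$ into $\mu$. Since your part (2) runs on the same engine, the gap propagates there as well.

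The paper's mechanism is structurally different and avoids this issue. A set in the cofinal family is required to be downward closed under the orders indexed by \emph{all} of its own $(n-1)$-element subsets (an $n$-ordering system, \cref{def:n-ordering system}, \cref{def:closed sets}), not merely under the enumeration attached to its top point. For any $(n+1)$-point set there is then a canonical $(n-1)$-subset $s'$ with the two remaining points in $\dom(\precdot_{s'})$ (\cref{Lemma: inductive maximum}), and that single order already forbids shattering (\cref{prop:OVC}); no coherence across different indices is needed. The price is that \emph{cofinality} becomes the hard part: for $\omega_n$ the paper forces the top-level orders with a ccc poset of finite conditions so that finite closures exist generically (\cref{fc}, \cref{Corollary: omega_n}), and for the singular case it builds the families in the ground model while $\theta$ is still regular --- where closures come from a routine $\omega$-step closing-off argument (\cref{regular}) --- and then observes that Prikry forcing preserves cardinals, adds no new finite subsets of old sets, and VC-dimension is absolute (\cref{prop:measurable}). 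In particular no coherent enumerations, morasses, or indiscernibility from the measure are used; the measurable is needed only so that a regular $\theta$ can afterwards be made to have countable cofinality.
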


The notion of VC-dimension is important in machine learning, where it corresponds to \emph{PAC} learnability (see \cite{MLShaiShai}), and also has a model theoretic counterpart, namely the \emph{NIP/IP} dichotomy, which we will not go into here, but see \cite{Simon} for more. An interesting application is the existence of a plethora of examples of 2-sorted structures with IP, whose language each has a single binary NIP relation. 

In their paper, the following was also proved:
\begin{fct}{\cite[Theorem 4.1]{BBNKS}}
\label{IP}
    Suppose $\theta$ is an infinite cardinal, $X$ a set with $|X|\geq \theta^+$ and $\Ff\subseteq[X]^{<\theta}$ is $\omega$-cofinal, then the structure $\mathcal{M}=(X,\Ff,\in)$ has $IP$, i.e., there is some definable family in $\mathcal{M}$ with IP (see \cref{VCdim}). 
\end{fct}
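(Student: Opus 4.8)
The plan is to exhibit a single formula $\varphi(x;\bar y)$ in the language of $\mathcal{M}=(X,\Ff,\in)$ and show that it shatters arbitrarily large finite sets, so that $\VC(\varphi)=\infty$ and $\mathcal{M}$ has IP. First I would observe that the witnessing formula must be genuinely quantified: every quantifier-free formula is a Boolean combination of instances of the single binary relation $\in$ (together with equalities), so if the edge relation $x\in y$ happened to be NIP — which is exactly the situation produced by the finite-VC-dimension families constructed elsewhere in the paper — then every quantifier-free formula would be NIP as well. Hence the independence cannot be read off the membership relation directly and must be manufactured using a quantifier over $\Ff$.

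The two combinatorial resources are the following. Cofinality provides \emph{inclusions}: any finite set of points lies in a common member of $\Ff$, so positive membership requirements can always be met. The cardinal gap provides \emph{exclusions}: since each member of $\Ff$ has size $<\theta$ while $|X|\ge\theta^+$, any family of fewer than $\theta^+$ sets from $\Ff$ — in particular any finite family — fails to cover $X$, so for any finitely many chosen sets there is a point avoiding all of them. The formula I would use packages these together, e.g.\ one of the form $\varphi(x;y,z):=\exists S\in\Ff\,(\,y\in S\wedge z\in S\wedge x\notin S\,)$ or a close variant, so that a single parameter calls on the cofinality of $\Ff$ to supply an inclusion while the free point variable records an exclusion.

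To shatter a set of size $n$ I would fix $n$ and build, by recursion on the subsets $s\subseteq[n]$, the shattered tuples $\bar a_0,\dots,\bar a_{n-1}$ together with parameters $\bar b_s$ realizing the pattern $\varphi(\bar a_i;\bar b_s)\iff i\in s$. At each step the positive constraints (those $i\in s$) are met by invoking cofinality to cover the relevant points, while the negative constraints (those $i\notin s$) are met by choosing the auxiliary points fresh with respect to the finitely many sets used so far, which is possible precisely because fewer than $\theta^+$ sets of size $<\theta$ miss a point.

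The hard part will be reconciling the opposing demands of inclusion and exclusion: cofinality naturally wants the points fixed before the covering sets are chosen, so that a cover exists, whereas exclusion naturally wants the sets fixed before the points, so that a fresh point can dodge them. The role of the quantifier in $\varphi$ is exactly to decouple these two roles, letting the bound set-variable $S$ be chosen last for each pattern while the recorded exclusions are witnessed by points selected along the way; organizing the recursion so that every pattern $s\subseteq[n]$ is simultaneously achievable is the crux of the argument. I would finally note that the hypothesis $|X|\ge\theta^+$ is indispensable: for $|X|=\theta$ the proper initial segments of a linear order form an $\omega$-cofinal family yet define an NIP (linearly ordered) structure, so genuine slack between the size of the members of $\Ff$ and the size of $X$ is what drives the exclusions.
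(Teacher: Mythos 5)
The paper does not actually prove this statement: it is quoted verbatim as \cite[Theorem 4.1]{BBNKS}, so there is no in-paper argument to compare yours against, and I can only assess your proposal on its own terms. It has a genuine gap.

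The central problem is the formula you commit to. Take $\theta=\omega$, $|X|=\omega_1$ and $\Ff=[X]^{<\omega}$, which satisfies every hypothesis of the statement. For this $\Ff$ your formula $\varphi(x;y,z)=\exists S\,(y\in S\wedge z\in S\wedge x\notin S)$ reduces to $x\neq y\wedge x\neq z$ (witnessed by $S=\{y,z\}$ whenever $x\notin\{y,z\}$), which has VC-dimension $2$ and is NIP; the structure does have IP here, but only via the atomic formula $x\in y$, which your opening paragraph sets aside. So your $\varphi$ cannot shatter large sets for every family meeting the hypotheses, and no organization of the recursion will fix that. The source of the trouble is a polarity error in how the two resources are deployed: in $\varphi$ the clause $x\notin S$ occurs positively, so choosing the point ``fresh'' (outside finitely many sets) makes $\varphi$ \emph{true} and only serves the positive instances. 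The negative instances $\neg\varphi(a;b,c)$ assert that \emph{every} $S\in\Ff$ containing $b$ and $c$ also contains $a$ --- a closure condition quantified over the entire, possibly enormous, family --- and neither cofinality nor the cardinality gap gives any control over which additional points a covering set is forced to contain; dodging ``the finitely many sets used so far'' cannot certify a universal statement about all of $\Ff$. Finally, you explicitly defer ``organizing the recursion so that every pattern $s\subseteq[n]$ is simultaneously achievable'' as ``the crux of the argument'': since the $n$ points participate in all $2^n$ patterns at once, that conflict is the entire content of the theorem, and leaving it unresolved means the proposal is a description of the difficulty rather than a proof. A correct argument must either handle the case where $\in$ itself has IP separately and extract substantially more structure in the NIP case, or give up on a single fixed formula chosen in advance.
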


Since the structure $(X,\Ff,\in)$ is interpretable in every structure in which $\Ff$ is definable, \cref{uncountable,consistency} together with \cref{IP} give us a family of examples (consistently) of NIP families of sets which are not definable in any NIP structure, which would hopefully give us more understanding of local NIP phenomena, and perhaps serve as a source for counterexamples to conjectures regarding such formulas.

This work is part of the third author's master's thesis. The complementary work towards this thesis was written in \cite{Fin} together with Johanna Steinmeyer, in it a finitary analogue of the above question is considered. 

The third author would like to thank the first two authors for guiding him towards the completion of this thesis, and to Uri Abraham for refereeing the thesis with great care, leading to improvements in its presentation, and therefore in the presentation of this paper.

\subsection*{A bit on the proofs.}

The main construction throughout this paper is that of an $n$-ordering system, see \cref{def:n-ordering system}. The idea is to construct systems of compatible well-orders on a cardinal and its initial segments (and each of their initial segments, etc.) and considering sets which are closed relative to these systems, in a certain precise sense (see \cref{def:closed sets}). This idea originated in \cite[Theorem 3.8]{BBNKS} where this was done for $n=1$. Having this general definition forces us to prove many properties by induction on $n$. Accordingly, we made an effort to find the correct definitions so that the proofs are as simple as possible.



\section{Preliminaries and ordering systems}

\subsection{VC dimension and cofinal families of sets}
\begin{defn}
\label{cofinal}
    Given a set $X$ and a cardinal $\theta$, write $[X]^{<\theta}$ for the collection of subsets of $X$ of size less than $\theta$.
    Say that $\Ff\subseteq[X]^{<\theta}$ is \emph{$\lambda$-cofinal} if every $A\in[X]^{<\lambda}$ is contained in a member of $\Ff$.
\end{defn}

\begin{defn}
\label{VCdim}
    Fix a family $\Ff\subseteq\mathcal{P}(X)$ and a cardinal $\kappa\leq |X|$.
    \begin{enumerate}
        \item Say that $A\subseteq X$ is \emph{shattered by $\Ff$} if for every $A_0\subseteq A$ there is some $S_0\in\Ff$ such that $A_0=A\cap S_0$.
        \item The \emph{VC-dimension of $\Ff$} is the maximal size of a finite subset of $X$ which is shattered by $\Ff$, or $\infty$ if this is unbounded. Say that $\Ff$ is a \emph{VC-class} if $\VC(\Ff)<\infty$.  
    \end{enumerate} 
\end{defn}

\begin{defn}\label{def:VC cof}
    Given cardinals $\lambda\leq\theta\leq\kappa$, let $\VCcof(\lambda,\theta,\kappa)$ be the least number $n\in\omega\cup\{\infty\}$ such that there is a  $\lambda$-cofinal family $\Ff\subseteq[\kappa]^{<\theta}$ of VC-dimension $n$.
\end{defn}

We start by investigating basic properties of this function.

\begin{prop}
\label{monotonicity} 
    Given $\Ff\subseteq \mathcal{P}(X)$ and $X_0\subseteq X$ we have that any subfamily $\Ff_0$ of the family $\Ff\restriction X_0=\{X_0\cap S: S\in\Ff\}$ satisfies $\VC(\Ff_0)\leq \VC(\Ff)$. In particular, for every choice of cardinals $\lambda\leq\theta\leq\kappa$ and $\lambda_0\leq\theta_0\leq\kappa_0$ with $\kappa_0\leq\kappa$, $\lambda_0\leq\lambda$ and $\theta\leq\theta_0$, we have that $\VCcof(\lambda_0,\theta_0,\kappa_0)\leq \VCcof(\lambda,\theta,\kappa)$.
\end{prop}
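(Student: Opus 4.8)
The plan is to prove the first (non-cardinal) assertion directly from the definition of shattering, and then obtain the ``in particular'' clause by applying it to a suitable restriction of a witnessing family.

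First I would handle the restriction statement. Fix $\Ff\subseteq\mathcal{P}(X)$, $X_0\subseteq X$, and a subfamily $\Ff_0\subseteq\Ff\restriction X_0$. It suffices to show that every finite set shattered by $\Ff_0$ is already shattered by $\Ff$, since $\VC$ is the supremum of the sizes of shattered finite sets. So let $A$ be finite and shattered by $\Ff_0$. The key first observation is that $A\subseteq X_0$: applying the shattering condition to $A$ itself yields $T\in\Ff_0$ with $A=A\cap T$, and since every $T\in\Ff_0\subseteq\Ff\restriction X_0$ satisfies $T\subseteq X_0$, we get $A\subseteq X_0$. Now for arbitrary $A'\subseteq A$ choose $T\in\Ff_0$ with $A'=A\cap T$ and write $T=X_0\cap S$ with $S\in\Ff$; using $A\subseteq X_0$ we compute $A\cap S=A\cap X_0\cap S=A\cap T=A'$. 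Hence $A$ is shattered by $\Ff$, giving $\VC(\Ff_0)\leq\VC(\Ff)$.

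Next I would deduce the monotonicity of $\VCcof$. We may assume $\VCcof(\lambda,\theta,\kappa)=n<\infty$, as the inequality is trivial otherwise, and fix a $\lambda$-cofinal $\Ff\subseteq[\kappa]^{<\theta}$ with $\VC(\Ff)=n$. Since $\kappa_0\leq\kappa$ we view $\kappa_0\subseteq\kappa$ and set $\Ff_0:=\Ff\restriction\kappa_0$. Each member $\kappa_0\cap S$ of $\Ff_0$ has size $<\theta\leq\theta_0$, so $\Ff_0\subseteq[\kappa_0]^{<\theta_0}$. For $\lambda_0$-cofinality, take $A\in[\kappa_0]^{<\lambda_0}$; then $A\in[\kappa]^{<\lambda}$ because $\lambda_0\leq\lambda$, so $\lambda$-cofinality of $\Ff$ provides $S\in\Ff$ with $A\subseteq S$, whence $A=A\cap\kappa_0\subseteq\kappa_0\cap S\in\Ff_0$. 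Finally, the first part gives $\VC(\Ff_0)\leq\VC(\Ff)=n$, so $\Ff_0$ witnesses $\VCcof(\lambda_0,\theta_0,\kappa_0)\leq n$.

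There is no serious obstacle here; this is a bookkeeping lemma. The only points needing a moment's care are the two small set-theoretic identities — that a set shattered by a family of subsets of $X_0$ must itself lie in $X_0$ (so that $A\cap X_0=A$ may be used freely), and that the three cardinal inequalities point in the right directions so that both the size bound ($\theta\leq\theta_0$) and the cofinality transfer ($\lambda_0\leq\lambda$, together with $\kappa_0\leq\kappa$) go through.
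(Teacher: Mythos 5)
Your proof is correct and follows essentially the same route as the paper: the paper dismisses the first part as trivial and then restricts a witnessing family to the smaller ground set, exactly as you do. Your write-up merely fills in the details the paper omits (in particular the observation that a set shattered by $\Ff\restriction X_0$ must lie inside $X_0$).
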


\begin{proof}
    The first part is trivial. For the second part, let $\Ff\subseteq[\theta]^{<\kappa}$ be $\lambda$-cofinal, and apply the first part with $X=\theta$ and $X_0=\theta_0$. It is clearly still $\lambda$-cofinal, and consists of sets of size less than $\kappa \leq \kappa_0$, so in particular $\lambda_0$-cofinal, giving us the desired inequality.
\end{proof}

Restating \cref{lower} in these terms, we have:

\begin{fct}{\cite[Proposition 3.3 and Remark 3.7]{BBNKS}}
\label{lower1}
    For every infinite cardinal $\kappa$ and natural number $n$, we have $\VCcof(n+2,\theta,\theta^{+n})\geq n+1$.
\end{fct}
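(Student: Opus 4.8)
The plan is to observe that \cref{lower1} is nothing more than a translation of \cref{lower} into the language of \cref{def:VC cof}, so the argument consists of unwinding the definition and citing the former. Concretely, by \cref{def:VC cof} the quantity $\VCcof(n+2,\kappa,\kappa^{+n})$ is the least $m\in\omega\cup\{\infty\}$ for which there exists an $(n+2)$-cofinal family $\Ff\subseteq[\kappa^{+n}]^{<\kappa}$ with $\VC(\Ff)=m$. But \cref{lower} asserts precisely that \emph{every} such family satisfies $\VC(\Ff)\geq n+1$. Taking the infimum over all admissible $\Ff$ (and noting that the parameters satisfy $n+2\leq\kappa\leq\kappa^{+n}$, so the expression is well-formed) therefore forces $\VCcof(n+2,\kappa,\kappa^{+n})\geq n+1$, which is the claim.

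Since all of the substantive combinatorics resides in \cref{lower} — which we are free to assume — there is essentially no obstacle at this step; the only thing to check is that the parameters line up, and the role of $\lambda=n+2$ as the cofinality index matches the ``$(n+2)$-cofinal'' hypothesis of \cref{lower}. Were one instead to establish the lower bound from scratch, the natural route would be induction on $n$: the base case $n=0$ amounts to verifying that a $2$-cofinal family in $[\kappa]^{<\kappa}$ can neither have all its members contain a fixed point nor all avoid one, so that some singleton is shattered; and the inductive step would exploit the extra degree of cofinality to separate one further coordinate off an already-shattered $n$-element configuration sitting inside an initial segment of size $\kappa^{+(n-1)}$.

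The crux of such a direct argument — and the place where care is genuinely needed — is arranging the candidate shattered $(n+1)$-set so that the final coordinate can be split using only members of $\Ff$ of size $<\kappa$, which is exactly the pigeonhole tension between cofinality and smallness emphasized in the introduction. For the reformulation at hand, however, none of this machinery is required: the inequality $\VCcof(n+2,\kappa,\kappa^{+n})\geq n+1$ drops out immediately from \cref{lower}.
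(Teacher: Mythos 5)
Your proposal is correct and matches the paper exactly: the paper offers no independent proof of this fact, introducing it with the single phrase ``Restating \cref{lower} in these terms,'' which is precisely the unwinding of \cref{def:VC cof} against \cref{lower} that you carry out. (The extra sketch of a from-scratch induction is not needed, and your consistent use of $\kappa$ silently fixes the $\kappa$/$\theta$ mismatch in the paper's own statement of the fact.)
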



From \cref{lower1} we get the following interesting corollary, which is not used elsewhere in this work. It also follows more directly from \cite[Theorem 2.2]{Fin}.

\begin{cor}
    Suppose $\Ff\subseteq[\omega]^{<\omega}$ has the following ``uniform cofinality'' property: there is a function $f:\omega\rightarrow \omega$ such that given $A\in[\omega]^n$ there is $B\in[\omega]^{<f(n)}$ with $A\subseteq B$ and $B\in\Ff$. Then $\VC(\Ff)=\infty$.
\end{cor}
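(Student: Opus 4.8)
The plan is to argue by contradiction using \cref{lower1}, transferring the situation from $\omega$ up to $\omega^{+d}=\aleph_d$ by a model-theoretic compactness argument. So suppose toward a contradiction that $\VC(\Ff)=d<\infty$, and work in the two-sorted structure $\mathcal{M}=(\omega,\Ff,\in)$ (points, sets, and membership). The crucial feature of the \emph{uniform} cofinality hypothesis, as opposed to plain $\omega$-cofinality, is that it is first-order expressible in $\mathcal{M}$: for each fixed $n$ the statement
\[
  \forall x_1\cdots x_n\left(\bigwedge_{i\neq j}x_i\neq x_j\;\rightarrow\;\exists S\in\Ff\Big(\bigwedge_{i} x_i\in S\;\wedge\;|S|<f(n)\Big)\right)
\]
is a genuine first-order sentence, because $f(n)$ is a \emph{fixed} natural number and ``$|S|<f(n)$'' simply abbreviates the first-order assertion that $S$ has no $f(n)$ distinct elements. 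Likewise, for the fixed $d$, the assertion $\VC(\Ff)\le d$ --- that no $d+1$ distinct points are shattered --- is a single first-order sentence. All of these sentences hold in $\mathcal{M}$.

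Next I would pass to a large elementary extension. By compactness, adjoining $\omega^{+d}$ new distinct constants of the point sort, one obtains $\mathcal{M}'=(\Omega,\Ff',\in)\succeq\mathcal{M}$ whose point sort $\Omega$ has cardinality at least $\omega^{+d}$; all the sentences above persist in $\mathcal{M}'$. The key point --- and the main subtlety to get right --- is that because every bound $f(n)$ is a standard natural number, the witnessing sets remain \emph{genuinely finite}: given any externally finite $A\subseteq\Omega$ of size $n$, the relevant sentence produces $S\in\Ff'$ with $A\subseteq S$ and $|S|<f(n)$, so $S$ is truly finite. Letting $\Ff'_{\fin}\subseteq\Ff'$ be the subfamily of externally finite members (viewing $\Ff'$ as a family of subsets of $\Omega$), we conclude that $\Ff'_{\fin}\subseteq[\Omega]^{<\omega}$ is $\omega$-cofinal, while $\VC(\Ff'_{\fin})\le\VC(\Ff')\le d$.

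Finally I would restrict to a subset $X_0\subseteq\Omega$ with $|X_0|=\omega^{+d}$. Every finite $A\subseteq X_0$ is covered by some finite $S\in\Ff'_{\fin}$, so $S\cap X_0\in\Ff'_{\fin}\restriction X_0$ still covers $A$; hence $\Ff'_{\fin}\restriction X_0$ is $\omega$-cofinal on $X_0$ --- in particular $(d+2)$-cofinal --- and lives in $[X_0]^{<\omega}$ with $\VC\le d$ by \cref{monotonicity}. Identifying $X_0$ with $\omega^{+d}$, this produces a $(d+2)$-cofinal family in $[\omega^{+d}]^{<\omega}$ of VC-dimension at most $d$, so $\VCcof(d+2,\omega,\omega^{+d})\le d$, contradicting \cref{lower1}. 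Thus $\VC(\Ff)\neq d$ for every $d<\omega$, i.e.\ $\VC(\Ff)=\infty$.

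The step I expect to be the real obstacle is exactly the finiteness control in the second paragraph: one must ensure that the transferred family still \emph{consists of finite sets} and is still cofinal, which is precisely where uniform cofinality is indispensable. Without the uniform bounds $f(n)$, $\omega$-cofinality alone is not first-order (and is in fact compatible with $\VC=1$, witnessed by the initial segments of $\omega$), so the covering sets in the elementary extension could be internally-finite but externally infinite, and the argument would break down. (One could alternatively bypass the extension entirely via a direct Sauer--Shelah counting, comparing the $\binom{m}{d+1}$ many $(d+1)$-subsets of a large finite $X$ against the $O(m^d)$ traces of a class of VC-dimension $d$, as in \cite[Theorem 2.2]{Fin}.)
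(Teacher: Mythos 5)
Your proposal is correct and follows essentially the same route as the paper: express the uniform cofinality hypothesis and the bound $\VC(\Ff)\le d$ as first-order sentences about $(\omega,\Ff,\in)$, pass to an elementary extension of the point sort of size $\aleph_d$, note that the covering sets remain externally finite because the bounds $f(n)$ are standard, and contradict \cref{lower1} via the finite members of the transferred family. The only cosmetic difference is that the paper takes the extension of cardinality exactly $\aleph_d$ rather than cutting down afterwards with \cref{monotonicity}.
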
 

\begin{proof}
    Fix such a family $\Ff$ with a corresponding function $f:\omega\rightarrow \omega$. consider the two-sorted structure $\mathcal{M}=(\omega,\Ff,\in)$, where $\mathord{\in} \subseteq \omega\times\Ff$ is the incidence relation. Then for every $n<\omega$, we have that $\mathcal{M}\models \psi_n$, where:
\begin{gather*}
   \psi_n:= \ \forall x_0,\ldots,x_{n-1}\in \omega\ \exists y_0,\ldots,y_{f(n)-1}\in\omega\ \exists S\in\Ff \\
   \big\{x_0,\ldots,x_{n-1}\big\}\subseteq S\land \big\{y_0,\ldots,y_{f(n)-1}\big\}=S
\end{gather*}

    Assume towards a contradiction that $\VC(\Ff)=m<\omega$. Let $\mathcal{N}=(N,\hat{\Ff})$ be an elementary extension of cardinality $\aleph_m$. Without loss of generality we have $N=\aleph_m$. Now, write $\hat{\Ff}_{\fin}=\hat{\Ff}\cap[\aleph_m]^{<\omega}$. Since having VC-dimension $m$ is a first-order property, we have $\VC(\mathcal{\hat{F}})=m$, hence $\VC(\mathcal{\hat{F}}_{\fin})\leq m$. Because $\mathcal{N}\models \psi_n$ for every $n<\omega$, we have that $\hat{\Ff}_{\fin}$ is cofinal as well, a contradiction to the bound of \cref{lower1}.
\end{proof}

The following example is a strong hint towards the ideas to follow.

\begin{exs}
\label{order}
    \begin{enumerate}
        \item We claim that $\VCcof(\aleph_0,\aleph_0,\aleph_0)=1$. First, the family $\Ff=\big\{\{0,\ldots,n-1\}: n<\omega\big\}$ witnesses $\VCcof(\aleph_0,\aleph_0,\aleph_0)\leq 1$. Indeed, given any finite nonempty $A\subseteq \omega$ we have $A\subseteq\sup A+1$, and given any two distinct $n_1<n_2<\omega$, the set $\{n_1,n_2\}$ is not shattered by $\Ff$, as any $I\in\Ff$ with $n_2\in I$ has $n_1\in I$ as well, so $\VC(\Ff)<2$. Since any family with at least two distinct sets has VC-dimension at least 1, we have $\VC(\Ff)=1$. \cref{monotonicity,lower1} imply that $1\leq \VCcof(\aleph_0,\aleph_0,\aleph_0)$ as well.
        \item In fact, given any infinite cardinal $\kappa$, this generalizes to demonstrate that $\VCcof(\cof(\kappa),\kappa,\kappa)=1$, with the upper bound witnessed by the family of proper initial segments of $\kappa$. 
        \item Note that for every natural number $n$ and infinite cardinals $\theta\leq \kappa$, the family $[\kappa]^{n+1}$ witnesses $\VCcof(n+2,\theta,\kappa)\leq n+1$. In fact, applying \cref{lower1} and monotonicity gives us an equality whenever $\kappa\geq\theta^{+n}$. 
    \end{enumerate}
\end{exs}

Our main method for constructing cofinal families will use the following, which was implicitly constructed in \cite{BDHSY} and explicitly for the case $n=2$ in \cite{BBNKS}

\begin{defn}\label{def:n-ordering system}
    By induction on $0 < n < \omega$ we define when a function $\mathord{\precdoteq}:[X]^{<n}\rightarrow\mathcal{P}(X^2)$ is an \emph{$n$-ordering system} on a set $X$. In the following and throughout, we write $\precdoteq_s$ for $\mathord{\precdoteq}(s)$ and $\precdot$ for the irreflexive variant, i.e., $\precdot_s := \precdoteq_s \setminus \set{(x,x)}{x\in X}$.
    
    \begin{itemize}
        \item For $n=1$, $\precdoteq$ is a 1-ordering system if $\mathord{\precdoteq}_{\emptyset}$ is a (non-strict) well-order on $X$. 
        \item For $n>1$ and any $X$, $\precdoteq$ is an $n$-ordering system if it satisfies the following conditions:
        \begin{itemize}
            \item $\precdoteq_\emptyset$ is a (non-strict) well-order on $X$.
            
            \item  For any $x_0 \in X$, the function $\mathord{\precdoteq}^{x_0}:[X_{\precdot_\emptyset x_0}]^{<n-1}\rightarrow\mathcal{P}(X^2)$ given by $\mathord{\precdoteq}^{x_0}_t = \mathord{\precdoteq_{t\cup \{x_0\}}}$ 
            is an $(n-1)$-ordering system on the $\precdoteq_\emptyset$-initial segment $X_{\precdot_\emptyset x_0} := \set{x\in X}{x\precdot_\emptyset x_0}$.
        \end{itemize}
    \end{itemize}
    We also define the domain of $\precdot_s$ (or $\precdoteq_s)$, denoted by $\dom(\precdot_s)=\dom(\precdoteq_s)$, as usual: $\set{x\in X}{(x,x) \in \mathord{\precdoteq_s}}$.
    \end{defn}

As an illustration of the idea, consider the following instance of the above definition.

    \begin{ex}
        A 3-ordering system on a set $X$ consists of the following:
        \begin{itemize}
            \item A well-order $\precdot_\emptyset$ on $X$.
            \item For every $x\in X$, a well-order $\precdot_{\{x\}}$ on the set $X_{\precdot_\emptyset x}$ defined above.
            \item For every $x,y\in X$ such that $y\precdot_\emptyset x$, a well-order $\precdot_{\{x,y\}}$ on the set $\{z\in X: z\precdot_\emptyset x\land z\precdot_{\{x\}}y\}$.
        \end{itemize}
    \end{ex}

    \begin{rem}\label{rem:restriction to smaller n}
        By induction on $n$, if $\precdot$ is an $n$-ordering system on a set $X$ and $k<n$, then $\mathord{\precdot}\restriction [X]^{<k}$ is a $k$-ordering system.
    \end{rem}

    \begin{defn} \label{def:bound on order type}
        Given an $n$-ordering system $\precdot$ on a set $X$ and a finite sequence of ordinals $\bar{v}=(v_0,\ldots,v_{n-1})\in \Ord^n$, say that $\bar{v}$ \emph{bounds the order type of $\precdot$} if for every $s\in[X]^{<n}$ we have $\otp(\precdot_s)\leq v_{|s|}$.
    \end{defn}

Our goal is to use $n$-ordering systems for various $n < \omega$ to produce cofinal families of (relatively) small VC dimension. 

\begin{rem}\label{rem:domain of <<s union a}
     Suppose $\precdot$ is an $n$-ordering system on a set $X$ for $1<n$. Then, for every set $s \in [X]^{<n-1}$ and every $a\in X$ such that $a$ is in the domain of $\precdot_s$, the domain of $\precdot_{s\cup \{a\}}$ is $X_{\precdot_s a} = \set{x \in X}{x \precdot_s a}$. 
          The proof is by induction on $1<n$. If $s$ is empty this follows directly from \cref{def:n-ordering system}. This is the only case when $n=2$ so this covers the base of the induction. Otherwise, let $x_0 = \max_{\precdot_\emptyset} s$ and let $t=s \setminus \{x_0\}$. Then the domain of $\precdot_{s \cup \{a\}}$ is the same as the domain of $\precdot^{x_0}_{\{a\} \cup t}$. Since $\precdot^{x_0}$ is an $(n-1)$-ordering system and $1<n-1$ (since $s$ is nonempty), by induction said domain is $\set{x \in X_{\precdot_{\emptyset} x_0}}{x \precdot^{x_0}_t a}$. But clearly, if $x \precdot^{x_0}_t a$ then $x \in X_{\precdot_{\emptyset} x_0}$ (since $\precdot^{x_0}$ is an $(n-1)$-ordering system on $X_{\precdot_{\emptyset} x_0}$), so said domain is $\set{x \in X}{x \precdot^{x_0}_t a}$ which is by definition $\set{x \in X}{x \precdot_s a}$ as required.

\end{rem}

\begin{defn} \label{def:closed sets}
    Suppose $\precdot$ is an $n$-ordering system on a set $X$. A set $S \subseteq X$ is \emph{$\precdot$-closed} if for every $s\subseteq S$ of size $|s|=n-1$ and every $a,b\in \dom(\mathord{\precdot_s})$, if $b\in S$ and $a \precdot_s b$ then $a\in S$.
        
\end{defn}
\begin{ex}\label{exa:n=1}
    If $\precdot$ is a 1-ordering system, then a set is $\precdot$-closed if and only if it is an initial segment of $\precdot_\emptyset$.
\end{ex}

\begin{defn} \label{def:theta-closures}
    Let $\precdot$ be an $n$-ordering system on a set $X$.
    \begin{itemize}
        \item Given $\lambda\leq\theta$, say that $\precdot$ \emph{has $(\lambda,\theta)$-closures} if every $A\in[X]^{<\lambda}$ is contained in an $\precdot$-closed set $B\in[X]^{<\theta}$.
        \item Say that $\precdot$ \emph{has $\theta$-closures} whenever it has $(\theta,\theta)$-closures. Say that it \emph{has finite closures} if it has $\omega$-closures.
    \end{itemize}
\end{defn}

\begin{lem}
\label{Lemma: inductive maximum}
    Suppose $\precdot$ is an $n$-ordering system on a set $X$ and $s\in[X]^{<\omega}$. Then for every $k\leq \min\{n-1,|s|\}$ there is a unique $s_{k}\in[s]^k$ such that $s\setminus s_{k}\subseteq \dom(\precdot_{s_{k}})$.
\end{lem}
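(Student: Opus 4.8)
The plan is to prove the statement by induction on $k$, simultaneously for all $n$-ordering systems (over all $n$ with $n>k$), peeling off at each stage the $\precdot_\emptyset$-maximum of $s$ and descending into the $(n-1)$-ordering system $\precdot^{x_0}$ from \cref{def:n-ordering system}. For $k=0$ the unique choice is $s_0=\emptyset$, and $s\setminus s_0=s\subseteq X=\dom(\precdot_\emptyset)$, so the base case is immediate. For the inductive step fix $n>k\geq 1$, an $n$-ordering system $\precdot$ on $X$, and a finite $s\subseteq X$ with $|s|\geq k$; let $x_0=\max_{\precdot_\emptyset} s$, which exists and is unique since $\precdot_\emptyset$ is a well-order. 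Writing $X'=X_{\precdot_\emptyset x_0}$ and $s'=s\setminus\{x_0\}\subseteq X'$, the system $\precdot^{x_0}$ is an $(n-1)$-ordering system on $X'$ with $n-1>k-1$ and $|s'|\geq k-1$, so the induction hypothesis applies to it with parameter $k-1$.

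For existence, let $u\in[s']^{k-1}$ be the set given by the induction hypothesis, so that $s'\setminus u\subseteq\dom(\precdot^{x_0}_u)$, and put $s_k=\{x_0\}\cup u$. Since $u\subseteq s'\subseteq X'$, \cref{def:n-ordering system} gives $\precdot_{s_k}=\precdot_{u\cup\{x_0\}}=\precdot^{x_0}_u$; hence $s\setminus s_k=s'\setminus u\subseteq\dom(\precdot^{x_0}_u)=\dom(\precdot_{s_k})$, as needed. For uniqueness, suppose $v\in[s]^k$ satisfies $s\setminus v\subseteq\dom(\precdot_v)$. The crucial claim is that $x_0\in v$: otherwise $x_0\in s\setminus v\subseteq\dom(\precdot_v)$, but setting $y_0=\max_{\precdot_\emptyset} v$ we have $y_0\precdot_\emptyset x_0$ (as $y_0\in s$ and $x_0=\max_{\precdot_\emptyset}s$), while $\dom(\precdot_v)\subseteq X_{\precdot_\emptyset y_0}$ would force $x_0\precdot_\emptyset y_0$, a contradiction. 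Granting $x_0\in v$, set $w=v\setminus\{x_0\}\in[s']^{k-1}$; exactly as above $\precdot_v=\precdot_{w\cup\{x_0\}}=\precdot^{x_0}_w$ and $s'\setminus w=s\setminus v\subseteq\dom(\precdot^{x_0}_w)$, so $w$ is a witness for $\precdot^{x_0}$ at parameter $k-1$, and the uniqueness part of the induction hypothesis gives $w=u$, whence $v=\{x_0\}\cup w=s_k$.

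The step I expect to be the main obstacle is the inclusion $\dom(\precdot_v)\subseteq X_{\precdot_\emptyset y_0}$ used in the uniqueness argument, i.e.\ that the domain of any relation in the system lies in the $\precdot_\emptyset$-initial segment strictly below the $\precdot_\emptyset$-maximum of its indexing set. I plan to isolate this as a short auxiliary fact: for any $m$-ordering system $\precdoteq$ on a set $Y$ and any $r\in[Y]^{<m}$ one has $\dom(\precdoteq_r)\subseteq Y$. This is proved by induction on $m$, peeling the $\precdoteq_\emptyset$-maximum $x_0$ of $r$ and noting that $\precdoteq^{x_0}$ lives on $Y_{\precdot_\emptyset x_0}\subseteq Y$; applying it to the $(n-1)$-ordering system $\precdot^{y_0}$ on $X_{\precdot_\emptyset y_0}$ together with $\precdot_v=\precdot^{y_0}_{v\setminus\{y_0\}}$ yields exactly the required containment. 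Alternatively, this follows directly from the domain computation in \cref{rem:domain of <<s union a}.
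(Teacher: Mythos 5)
Your proof is correct and takes essentially the same route as the paper's: both peel off $x_0=\max_{\precdot_\emptyset}s$, pass to the $(n-1)$-ordering system $\precdot^{x_0}$, and establish uniqueness via the containment $\dom(\precdot_v)\subseteq X_{\precdot_\emptyset \max_{\precdot_\emptyset}v}$ forcing $x_0\in v$. The only (cosmetic) difference is that you induct on $k$ with base case $k=0$ while the paper inducts on $n$ with base cases $n=1$ and $s=\emptyset$; since both parameters drop together in the inductive step, the arguments coincide.
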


\begin{proof}
    We prove this lemma by induction, where the case $n=1$ is trivial, as is the case of $s=\emptyset$. For $n>1$ and $s\neq\emptyset$ we write $x_0=\max_{\precdot_\emptyset}s$. By the induction hypothesis there exists $t_{k-1}\in[(s\setminus\{x_0\})]^{k-1}$ such that $s\setminus(\{x_0\}\cup t_{k-1})\subseteq \dom(\precdot^{x_0}_{t_{k-1}})=\dom(\precdot_{t_{k-1}\cup\{x_0\}})$, which shows that setting $s_k=t_{k-1}\cup\{x_0\}$ finishes the existence proof. For uniqueness, suppose $s_k,t_k\in[s]^k$ both satisfy the conclusion of the lemma. We will first claim that $s_k,t_k$ must contain $x_0$. Indeed, for any nonempty $t\subseteq s$ we have $x_0\notin\dom(\precdot_t)\subseteq X_{\precdot_\emptyset \max_{\precdot_\emptyset}t}\subseteq X_{\precdot_\emptyset x_0}$, so the desired inclusion cannot hold otherwise. Write $s_k'=s_k\setminus\{x_0\}$ and $t_k'=t_k\setminus \{x_0\}$. Note that we have $s\setminus\{x_0\}\subseteq \dom(\precdot_{x_0})$, so we have $s\setminus s_k\subseteq \dom(\precdot^{x_0}_{s_k'})\subseteq X_{\precdot_\emptyset x_0}$ and $s\setminus t_k\subseteq\dom(\precdot^{x_0}_{t_k'})$. Applying the induction hypothesis proves $s_k'=t_k'$, which completes the proof of uniqueness. 
\end{proof}

\begin{prop}
\label{prop:OVC}
    Suppose $\precdot$ is an $n$-ordering system on a set $X$. Then the collection $\Ff$ of $\precdot$-closed sets has VC-dimension at most $n$.
\end{prop}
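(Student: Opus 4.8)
The goal is to bound the VC-dimension of the family of $\precdot$-closed sets by $n$. So I need to show that no set $A \subseteq X$ of size $n+1$ can be shattered. The natural strategy is to take an arbitrary $A = \{a_0, \ldots, a_n\} \in [X]^{n+1}$ and exhibit a particular subset $A_0 \subseteq A$ that cannot be realized as $A \cap S$ for any $\precdot$-closed $S$. Given the recursive nature of the ordering system, I expect the obstruction to come from the following mechanism: since $|A| = n+1$ and an $n$-ordering system only "sees" sets of size $< n$, applying Lemma \ref{Lemma: inductive maximum} to $A$ with $k = n-1$ should produce a canonical subset $s := A_{n-1} \in [A]^{n-1}$ such that the remaining two elements $A \setminus s = \{a, b\}$ both lie in $\dom(\precdot_s)$. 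Without loss of generality $a \precdot_s b$.

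**The forbidden subset.** I claim the subset $A_0 := A \setminus \{a\}$ (equivalently, the one containing $b$ but omitting $a$, while keeping all of $s$) cannot be of the form $A \cap S$ for a $\precdot$-closed $S$. Indeed, suppose $A_0 = A \cap S$ for some $\precdot$-closed $S$. Then $s \subseteq A_0 \subseteq S$ and $b \in A_0 \subseteq S$, while $a \notin A_0$, so $a \notin S$. But $s \subseteq S$ has size $n-1$, both $a, b \in \dom(\precdot_s)$, $b \in S$, and $a \precdot_s b$; by \cref{def:closed sets} this forces $a \in S$, a contradiction. This shows $A$ is not shattered, and since $A$ was arbitrary of size $n+1$, we get $\VC(\Ff) \leq n$.

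**Main obstacle.** The delicate point is ensuring that the pair $\{a,b\}$ singled out really does sit inside $\dom(\precdot_s)$ for a set $s \subseteq A$ of size exactly $n-1$, so that the closure condition of \cref{def:closed sets} applies. This is precisely what \cref{Lemma: inductive maximum} delivers, applied with $k = \min\{n-1, |s|\} = n-1$ (using $|A| = n+1 \geq n-1$): it yields the unique $s_{n-1} \in [A]^{n-1}$ with $A \setminus s_{n-1} \subseteq \dom(\precdot_{s_{n-1}})$, and $A \setminus s_{n-1}$ has exactly two elements. I would then just name these two elements and order them by $\precdot_{s_{n-1}}$, which is a well-order (a fortiori a linear order) on its domain, so the two distinct elements are $\precdot_{s_{n-1}}$-comparable. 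The only case needing a side remark is $n = 1$: then $s = \emptyset$, $A = \{a, b\}$ with $a \precdot_\emptyset b$, and closed sets are initial segments of $\precdot_\emptyset$ by \cref{exa:n=1}, so the same argument (or a direct appeal to \cref{order}(1)) applies and gives $\VC(\Ff) \leq 1$.
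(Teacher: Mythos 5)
Your proof is correct and follows essentially the same route as the paper's: apply \cref{Lemma: inductive maximum} with $k=n-1$ to the $(n+1)$-element set, order the two leftover elements by $\precdot_{s_{n-1}}$, and observe that the subset omitting the smaller one cannot be cut out by a $\precdot$-closed set. Your explicit identification of the forbidden subset $A_0 = A\setminus\{a\}$ and the $n=1$ side remark are just slightly more detailed renderings of the same argument.
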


\begin{proof}
    Suppose that $s\subseteq X$ has size $n+1$. Apply \Cref{Lemma: inductive maximum}  to get $s'\in[s]^{n-1}$ such that $s\setminus s'\subseteq \dom(\precdot_{s'})$, and write $\{a,b\}=s\setminus s'$ As $\precdot_{s'}$ is a linear order, either $a \precdot_{s'} b$ or $b \precdot_{s'} a$. Assume without loss of generality that the former holds. Then every closed set containing $s',b$ also contains $a$ by \cref{def:closed sets}, so $s$ cannot be shattered.
\end{proof}

The following is a main ingredient in all that follows.

\begin{cor} \label{cor:closures imply bounded VC dimension}
    Suppose $\lambda\leq\theta \leq \kappa$ are infinite cardinals and $n<\omega$ satisfy that there exists an $(n+1)$-ordering system $\precdot$ on $\kappa^{+n}$ with $(\lambda,\theta)$-closures, then the family $\Ff$ of $\precdot$-closed sets of size less than $\theta$ has VC-dimension at most $n+1$. In particular, it witnesses $\VCcof(\lambda,\theta,\kappa^{+n})\leq n+1$. From 
    \cref{lower1} and \cref{monotonicity} it follows that $\VCcof(\lambda,\theta,\kappa^{+n})= n+1$. 
\end{cor}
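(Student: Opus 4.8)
The plan is to assemble three ingredients already in place: \cref{prop:OVC} for the VC-dimension bound, the $(\lambda,\theta)$-closures hypothesis (\cref{def:theta-closures}) for cofinality, and the combination of \cref{lower1} with \cref{monotonicity} for the matching lower bound. None of the three steps requires new work, so the proof is essentially an assembly argument; the only place demanding care is getting the variance directions in the monotonicity step right.

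First, for the upper bound on VC-dimension: since $\precdot$ is an $(n+1)$-ordering system on $\kappa^{+n}$, \cref{prop:OVC} gives that the collection of \emph{all} $\precdot$-closed sets has VC-dimension at most $n+1$. The family $\Ff$ is the subfamily consisting of those $\precdot$-closed sets of size less than $\theta$, so applying the first part of \cref{monotonicity} with $X_0=X=\kappa^{+n}$ yields $\VC(\Ff)\leq n+1$ at once.

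Second, for cofinality I would simply unpack \cref{def:theta-closures}: the assumption that $\precdot$ has $(\lambda,\theta)$-closures says precisely that every $A\in[\kappa^{+n}]^{<\lambda}$ is contained in some $\precdot$-closed $B\in[\kappa^{+n}]^{<\theta}$, and any such $B$ is by definition a member of $\Ff$. Hence $\Ff\subseteq[\kappa^{+n}]^{<\theta}$ is $\lambda$-cofinal, and together with the VC bound this shows $\Ff$ witnesses $\VCcof(\lambda,\theta,\kappa^{+n})\leq n+1$.

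For the reverse inequality I would invoke \cref{lower1}, which gives $\VCcof(n+2,\theta,\theta^{+n})\geq n+1$, and then transport this along \cref{monotonicity}. Taking the smaller triple to be $(n+2,\theta,\theta^{+n})$ and the larger to be $(\lambda,\theta,\kappa^{+n})$, the three hypotheses of \cref{monotonicity} all hold: $n+2\leq\lambda$ since $\lambda$ is infinite, the middle coordinate is unchanged, and $\theta^{+n}\leq\kappa^{+n}$ since $\theta\leq\kappa$. This yields $n+1\leq\VCcof(n+2,\theta,\theta^{+n})\leq\VCcof(\lambda,\theta,\kappa^{+n})$, and combined with the upper bound we conclude equality. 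The subtle bookkeeping is exactly this last step: one must match the covariant coordinates (first and third arguments) against the contravariant one (the second), and observe that the finite first coordinate $n+2$ appearing in \cref{lower1} causes no trouble precisely because $\lambda\geq\omega$.
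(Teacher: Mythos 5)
Your proof is correct and follows the paper's argument exactly: \cref{prop:OVC} plus the first part of \cref{monotonicity} for the VC bound, the $(\lambda,\theta)$-closures hypothesis for $\lambda$-cofinality, and \cref{lower1} transported via \cref{monotonicity} for the matching lower bound. Your careful check of the variance directions in the monotonicity step (including that the finite first coordinate $n+2$ is harmless since $\lambda\geq\omega$) is exactly the bookkeeping the paper leaves implicit.
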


\begin{proof}
    Fix such an ordering system $\precdot$. By \cref{prop:OVC}, the collection of $\precdot$-closed sets has VC-dimension at most $n+1$. By \cref{monotonicity} so does $\Ff$, and by having $(\lambda,\theta)$-closures this family is $\lambda$-cofinal.
\end{proof}

In \cite{BBNKS}, Bays, Simon and the first two authors use this idea in order to prove the following.
\begin{fct} [\cite{BBNKS},Theorem 3.8] \label{fac:finite closures on omega1}
\label{unctbl}
    There is a 2-ordering system on $\omega_1$ of order type bounded by $(\omega_1, \omega)$ (see \cref{def:bound on order type}) with finite closures, so in particular, $\VCcof(\omega,\omega,\omega_1)=2$.
\end{fct}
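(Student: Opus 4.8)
The plan is to separate the statement into a soft part and a hard part. The soft part---that a $2$-ordering system with finite closures already yields $\VCcof(\omega,\omega,\omega_1)=2$---is immediate from the machinery in place: applying \cref{cor:closures imply bounded VC dimension} with $n=1$, $\kappa=\omega$ and $\lambda=\theta=\omega$, any $2$-ordering system on $\omega_1=\omega^{+1}$ with $(\omega,\omega)$-closures (that is, finite closures) produces an $\omega$-cofinal family of finite sets of VC-dimension $\le 2$, while the matching lower bound comes from \cref{lower1} and \cref{monotonicity}. The order-type bound $(\omega_1,\omega)$ is essentially free: one takes $\precdot_\emptyset$ to be the usual order on $\omega_1$ (of type $\omega_1$), and observes that for each $\alpha<\omega_1$ the initial segment $X_{\precdot_\emptyset\alpha}=\alpha$ is countable, so any well-order $\precdot_{\{\alpha\}}$ placed on it may be chosen of type $\le\omega$. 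Thus the whole content lies in choosing the orders $\precdot_{\{\alpha\}}$ so that $\precdot$ has finite closures.

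First I would unwind finite closures concretely in the case $n=2$. By \cref{def:closed sets}, a set $S\subseteq\omega_1$ is $\precdot$-closed precisely when for every $c\in S$ the set $S\cap c$ (the members of $S$ that are ordinal-below $c$) is an initial segment of $(c,\precdot_{\{c\}})$. Since each $\precdot_{\{c\}}$ has order type $\le\omega$, its proper initial segments are finite, so any $\precdot$-closed $S$ whose $\precdot_\emptyset$-largest element $c$ satisfies $S\cap c\neq c$ is automatically finite. Hence finite closures amount to the assertion that the closure operator $\cl$, obtained by repeatedly adjoining, for each pair $b\precdot_\emptyset c$ in the current set, all $\precdot_{\{c\}}$-predecessors of $b$, sends finite sets to finite sets.

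The heart of the proof is to build the orders $\precdot_{\{\alpha\}}$ by recursion on $\alpha<\omega_1$ in a \emph{coordinated} way. I would carry the inductive hypothesis that for every $\beta<\alpha$ the orders $\{\precdot_{\{\gamma\}}:\gamma\le\beta\}$ already make $\cl$ finite on finite subsets of $\beta+1$. When defining $\precdot_{\{\alpha\}}$, the aim is to enumerate the countable set $\alpha$ in type $\le\omega$ so that every initial segment of $\precdot_{\{\alpha\}}$ is closed under the already-defined rules coming from tops below $\alpha$; equivalently, to pass through an increasing $\omega$-chain of finite closed sets exhausting $\alpha$. Granting such an enumeration, the closure of any finite $F$ with $\max F=\alpha$ is finite: closing the part below $\alpha$ stays finite by the inductive hypothesis, and the top-$\alpha$ requirement only forces an initial segment of $\precdot_{\{\alpha\}}$ which, by construction, is itself closed below, so the iteration stabilizes after finitely many rounds. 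This is exactly where the levels must be coordinated---one cannot choose the $\precdot_{\{\alpha\}}$ independently---and the natural device is a coherent system of well-orders on $\omega_1$ (built, for instance, from a ladder system or via minimal walks) in which each new order disagrees with the lower ones only on a controlled finite set, so adjoining predecessors never escalates the relevant ranks indefinitely.

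The main obstacle is precisely this finiteness, i.e.\ termination of the closure. Naively the orders at different levels interfere: adjoining a $\precdot_{\{c\}}$-predecessor $a$ of $b$ introduces a new element $a$ which, paired with some other element $c'$ of the set, can force further elements of large $\precdot_{\{c'\}}$-rank, which in turn reopen the top-level requirement, threatening an infinite loop. Ruling this out is the crux, and I expect it to demand building all the orders simultaneously under a coherence condition and then verifying, by induction on $\alpha<\omega_1$, that the closure of a finite set stays finite. It is this inductive verification---rather than the VC bound, which is automatic from \cref{prop:OVC}---that carries the real weight of the theorem.
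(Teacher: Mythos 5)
Your skeleton is essentially the paper's. Note first that the statement itself is quoted from \cite{BBNKS}; what the paper actually proves from scratch is the strengthening \cref{Theorem: closed initial segments} (all closed 2-initial segments are $\precdot$-closed, hence finite closures), and its proof is exactly your architecture: recursion on $\alpha<\omega_1$, prepending $\alpha$ as the $\precdot_{\{\alpha+1\}}$-first element at successors, and at a limit $\delta$ fixing an increasing $\omega$-chain of finite closed sets $(S_n)$ exhausting $\delta$ and defining $\precdot_{\{\delta\}}$ so that each $S_n$ is an initial segment. Where you undersell yourself is the final paragraph: the ``infinite loop'' you worry about is already killed by the device you named, and no coherent sequence, ladder system, or minimal walks are needed. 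The chain $(S_n)$ exists directly from the inductive hypothesis (a finite subset of $\delta$ lives in some $\beta+1<\delta$ and has a finite closure there, and the lower-level orders never change), and once the $S_n$ are initial segments of $\precdot_{\{\delta\}}$, the set $S_n\cup\{\delta\}$ is closed outright --- for tops $c<\delta$ because $S_n$ is closed, and for top $\delta$ because $\precdot_{\{\delta\}}$-predecessors of points of $S_n$ stay in $S_n$ --- so the closure of a finite set containing $\delta$ is computed in one step, with no reopening. The genuinely delicate bookkeeping in the paper (the relation $\vdash$, the antisymmetry condition on finite closed sets, and the one-element-at-a-time enumeration of $S_{n+1}\setminus S_n$) is needed only for the stronger conclusion of \cref{Theorem: closed initial segments} that \emph{every} initial segment of $\precdot_{\{\delta\}}$ is closed, not for finite closures; for the Fact as stated your argument, completed as above, suffices.
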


\section{Closed initial segments}

Recall from \cref{def:closed sets}: if $0<n$ and $\precdot$ is an $n$-ordering system on $X$ then a set $S \subseteq X$ is $\precdot$-closed if for every $s\subseteq S$ of size $|s|=n-1$ and every $a,b\in \dom(\mathord{\precdot_s})$, if $b\in S$ and $a \precdot_s b$ then $a\in S$. Given $s \subseteq X$ of size $n-1$ and $b \in X$, call the set $s \cup \{b\} \cup \set{a}{a \precdot_s b}$ the \emph{closed $n$-initial segment for $\precdot$ determined by $s \cup \{b\}$}. Note that when $n=1$, the closed 1-initial segment determined by $b$ is precisely the closed initial segment $X_{{\preceqdot_\emptyset}  b} = \set{x\in X}{x\preceqdot_\emptyset b}$ (see \cref{exa:n=1} as well). 

If $S$ is finite of size at least $n$, then $S$ is contained in a closed $n$-initial segment: let $\beta_n = \max_{\precdot_\emptyset} S$, $\beta_{n-1}= \max_{\precdot_{\{\beta_n\}}} S \setminus \{\beta_n\}$, \ldots, $\beta_1 = \max_{\precdot_{\set{\beta_i}{1<i\leq n}}} S \setminus \set{\beta_i}{1<i\leq n}$, then $S$ is contained in the closed $n$-initial segment determined by $\beta_n,\ldots,\beta_1$. Indeed, this follows from \cref{rem:domain of <<s union a}. Moreover, if $S$ is $\precdot$-closed, then the procedure above shows that $S$ itself is a closed $n$-initial segment.

Consider \cref{fac:finite closures on omega1}: the 2-ordering system there on $\omega_1$ has order type bounded by $(\omega_1,\omega)$. This means in particular that closed 2-initial segments are finite. On the other hand, by the previous paragraph all $\precdot$-closed finite sets of size at least 2 are closed 2-initial segments. This naturally raises the question:  can we improve \cref{fac:finite closures on omega1} so that closed 2-initial segments are $\precdot$-closed implying that the two notions coincide (and in particular the 2-ordering system will have finite closures)? This section is dedicated to answering this question by finding a 2-ordering system on $\omega_1$ for which closed 2-initial segments are $\precdot$-closed. Together with \cref{prop:OVC} this answers positively the first part of \cite[Question 4.7]{BBNKS}.


\begin{thm}
\label{Theorem: closed initial segments}
    There is a 2-ordering system $\precdot$ on $\omega_1$ of order type bounded by $(\omega_1,\omega)$ in which all closed 2-initial segments are $\precdot$-closed. In particular, it has finite closures. 
\end{thm}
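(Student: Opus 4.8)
The plan is to keep $\precdot_\emptyset$ equal to the usual $\in$-ordering of $\omega_1$ (a well-order of the set $\omega_1$ necessarily has type $\omega_1$, so this is no loss), so that $X_{\precdot_\emptyset\alpha}=\alpha$ for every $\alpha<\omega_1$, and to concentrate all the work on choosing, for each $\alpha<\omega_1$, a well-order $\precdot_{\{\alpha\}}$ of $\alpha$ of order type $\le\omega$; I will write $e_\alpha$ for its increasing enumeration. This automatically makes $\precdot$ a $2$-ordering system of order type bounded by $(\omega_1,\omega)$, so the only real issue is the closedness of closed $2$-initial segments. Before constructing anything I would unwind \cref{def:closed sets} for $n=2$: a closed $2$-initial segment has the form $I=\{c\}\cup\set{a}{a\preceqdot_{\{c\}} b}$ (with $b\precdot_\emptyset c$), and it is $\precdot$-closed exactly when, for every $\gamma\in I$ with $\gamma<c$, the set $I\cap\gamma$ is a $\precdot_{\{\gamma\}}$-initial segment. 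Quantifying over all such $I$, this translates into the following explicit \emph{coherence condition} on the enumerations: for all $\gamma<\alpha$ and all $\xi_1,\xi_2<\gamma$, if $\xi_1\precdot_{\{\gamma\}}\xi_2$ then $\xi_1\precdot_{\{\alpha\}}\xi_2$ or $\xi_1\precdot_{\{\alpha\}}\gamma$. Concretely this says: among the ordinals below $\gamma$, those preceding $\gamma$ in $e_\alpha$ form a finite $\precdot_{\{\gamma\}}$-initial segment, while those following $\gamma$ occur in $e_\alpha$ in their $\precdot_{\{\gamma\}}$-order. I would note at this point that the coherence condition is exactly what the theorem needs: once closed $2$-initial segments are $\precdot$-closed, the observation preceding the statement (every finite set of size $\ge2$ lies in a closed $2$-initial segment, which is now finite and closed) immediately yields finite closures, and \cref{prop:OVC} gives VC-dimension $\le2$.

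The construction itself I would carry out by recursion on $\alpha<\omega_1$, maintaining the coherence condition for all pairs $\gamma<\alpha'\le\alpha$. The base and the finite levels are trivial. At a successor $\alpha=\delta+1$ I would simply insert the new top ordinal at the very front, $e_{\delta+1}=\delta,e_\delta(0),e_\delta(1),\dots$, so that $\precdot_{\{\delta+1\}}\restriction\delta=\precdot_{\{\delta\}}$ and $\delta$ precedes every smaller ordinal; then coherence of $(\delta,\delta+1)$ is immediate, and for $\gamma<\delta$ the relative order of the ordinals $\le\gamma$ is unchanged, so coherence of $(\gamma,\delta+1)$ reduces verbatim to coherence of $(\gamma,\delta)$, which holds by induction. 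To organize the limit step I would first isolate a \emph{transitivity lemma} for the coherence condition, proved by induction: if it holds for $(\gamma,\beta)$ and for $(\beta,\alpha)$, then it holds for $(\gamma,\alpha)$. This reduces, at any limit $\lambda$, the verification for all $\gamma<\lambda$ to its verification for a cofinal set of $\gamma$'s.

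The heart of the argument, and the step I expect to be the main obstacle, is the limit stage. Fixing an increasing cofinal sequence $\langle\lambda_k:k<\omega\rangle$ with supremum $\lambda$, I must produce a single enumeration $e_\lambda$ of $\lambda$ of order type $\le\omega$ that is simultaneously coherent with every $e_{\lambda_k}$. The tension is the one flagged in the introduction: coherence with $e_{\lambda_k}$ forces the ordinals below $\lambda_k$ to appear (on a tail) in their fixed $\precdot_{\{\lambda_k\}}$-order, while enumerating all of $\lambda$ in type $\omega$ forces me to keep introducing larger and larger ordinals; reconciling these demands across infinitely many $k$ at once requires a diagonal/priority bookkeeping, building $e_\lambda$ so that its restriction to each $\lambda_k$ agrees with $e_{\lambda_k}$ off a finite front block. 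I would stress that the naive ``coherent union of the $\precdot_{\{\gamma\}}$'' cannot work: a direct limit of the $\precdot_{\{\gamma\}}$ is forced to have order type strictly above $\omega$ (some ordinal would sit at a limit position, violating the type-$\omega$ bound), which is precisely why only the weaker, finite-front coherence above is attainable and why the interleaving must be engineered by hand.

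Finally, with $e_\lambda$ in place and coherence of $(\lambda_k,\lambda)$ secured for all $k$, the transitivity lemma together with the inductive hypothesis for $(\gamma,\lambda_k)$ yields coherence of $(\gamma,\lambda)$ for every $\gamma<\lambda$, closing the recursion. Having built $\precdot$ satisfying the coherence condition, I would conclude by re-deriving from it that every closed $2$-initial segment is $\precdot$-closed, invoke the remark above to obtain finite closures, and apply \cref{prop:OVC} (equivalently \cref{cor:closures imply bounded VC dimension}) to recover $\VCcof(\omega,\omega,\omega_1)=2$.
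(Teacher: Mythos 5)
Your setup is sound: keeping $\precdot_\emptyset$ the usual order, working only with the enumerations $e_\alpha$ of order type $\le\omega$, and unwinding ``all closed $2$-initial segments are $\precdot$-closed'' into the pairwise coherence condition (for $\gamma<\alpha$: the ordinals below $\gamma$ that precede $\gamma$ in $e_\alpha$ form a finite $\precdot_{\{\gamma\}}$-initial segment, and the rest occur in $e_\alpha$ in $\precdot_{\{\gamma\}}$-order) is a correct reformulation; the successor step (insert the new top ordinal at the front) is exactly what the paper does. But the proposal has a genuine gap, in two places. First, the transitivity lemma you isolate is false. Take $\precdot_{\{2\}}$ to be the order $0,1$ on $\{0,1\}$, $\precdot_{\{3\}}$ the order $0,1,2$ on $\{0,1,2\}$, and $\precdot_{\{4\}}$ the order $1,2,0,3$ on $\{0,1,2,3\}$. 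Coherence holds for $(2,3)$ and for $(3,4)$ (in each case every ordinal below the smaller index precedes it in the larger enumeration, and the full domain is an initial segment), and every other pair is vacuous; yet coherence fails for $(2,4)$, since $0\precdot_{\{2\}}1$ but $0$ follows both $1$ and $2$ in $\precdot_{\{4\}}$. So at a limit $\lambda$ you cannot reduce the verification to a cofinal sequence $\langle\lambda_k\rangle$: securing $(\lambda_k,\lambda)$-coherence for all $k$ together with the inductive hypothesis does not yield $(\gamma,\lambda)$-coherence for the remaining $\gamma$. Second, and more fundamentally, the limit-stage construction itself --- which you correctly identify as the heart of the argument --- is not actually carried out; ``diagonal/priority bookkeeping'' names the difficulty rather than resolving it.

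The missing idea, which is the real content of the paper's proof, is an \emph{auxiliary invariant carried through the whole induction}: writing $A\vdash x$ when every closed set containing $A$ contains $x$, one maintains that for every finite closed $S$ and distinct $\beta,\gamma\notin S$, either $S\cup\{\gamma\}\not\vdash\beta$ or $S\cup\{\beta\}\not\vdash\gamma$. At a limit $\delta$ one then builds an increasing chain of finite closed sets $S_n$ exhausting $\delta$ and orders the elements of each difference $S_{n+1}\setminus S_n$ one at a time: the relation ``$x\preceq y$ iff $S\cup\{y\}\vdash x$'' (over the current closed set $S$) is a partial order precisely because of the invariant, and repeatedly appending a $\preceq$-minimal element keeps every intermediate set closed. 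Concatenating these enumerations gives $e_\delta$ of type $\omega$ all of whose initial segments are closed, which is what coherence with \emph{every} $\gamma<\delta$ amounts to. Preserving that extra invariant through successor and limit stages is itself nontrivial and is where most of the work in the paper lies; without it, or some substitute, your recursion cannot be closed.
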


\begin{proof}
    Given an ordering system $\precdot$ on a set $X$ and $A\subseteq X$, $x\in X$, write $A\vdash x$ if any $\precdot$-closed set $S$ which contains $A$ must contain $x$. 
    
    
    By induction on $\alpha<\omega_1$, we will recursively construct a sequence of 2-ordering systems  $\precdot^\alpha$ on $\alpha +1$ for $\alpha < \omega_1$ such that the following hold:
    \begin{enumerate}
        \item \label{itm:usual ordering} $\precdot^\alpha_\emptyset$ is the usual ordering on $\alpha+1$.
        \item \label{itm:increasing} $\precdot^\alpha_{\{\beta\}} = \precdot^{\alpha'}_{\{\beta\}}$ whenever $\beta \leq \alpha < \alpha'$.
        \item \label{itm:order type} $\precdot^\alpha_{\{\beta\}}$ is of order type $|\beta|$ for all $\beta\leq \alpha$.
        \item \label{IC} All closed 2-initial segments of $\precdot^\alpha$ are $\precdot^\alpha$-closed.
        \item \label{dash} Given a finite and $\precdot^\alpha$-closed set $S\subseteq \alpha$ and $\beta,\gamma\in \alpha\setminus S$ distinct, either $S \cup \{\gamma\}\not\vdash\beta$ or $S\cup \{\beta\}\not\vdash\gamma$. 
    \end{enumerate}

    If we succeed, then define $\precdot$ on $\omega_1$ by setting $\precdot_\emptyset$ as the usual ordering on $\omega_1$ and $\mathord{\precdot}_{\{\alpha\}} = \mathord{\precdot}_{\{\alpha\}}^\alpha$ for $\alpha<\omega_1$ to conclude: (\ref{IC}) tells us that all closed 2-initial segments are $\precdot$-closed.
    
    In the construction, we only explain how to define $\precdot^\alpha_{\{\alpha\}}$ since (\ref{itm:usual ordering}) and (\ref{itm:increasing}) determine the rest. Thus, to simplify notation, we will write $\precdot_{\{\alpha\}}$ instead of $\precdot^\alpha_{\{\alpha\}}$.
    
    Define $\precdot_{\{0\}}$ to be the empty ordering, then clearly it satisfies this. 
    
    We start with the successor step. Assuming we have defined $\precdot^\beta$ for all $\beta \leq \alpha<\omega_1$ such that the above hold, define $\precdot_{\{\alpha+1\}}=\{(\alpha,\beta):\beta<\alpha\}\cup \mathord{\precdot}_{\{\alpha\}}$ (i.e., leave the ordering $\precdot_{\{\alpha\}}$ on $\alpha$ unchanged and add $\alpha$ as the first element). We claim that our assumptions still hold. 
    
    (\ref{itm:usual ordering}) is clear.

    (\ref{itm:order type}): We have to show that the order type of $\prec_{\{\alpha+1\}}$ is $|\alpha+1|$. This follows directly by the construction and the induction hypothesis.

    (\ref{IC}): Suppose that $D$ is a closed 2-initial segment of $\precdot^{\alpha+1}$, i.e., has the form $\set{x<\alpha+2}{x \precdot_{\{\beta\}} \gamma} \cup \{\beta,\gamma\}$ for some $\gamma<\beta<\alpha+2$. If $\beta<\alpha+1$, then $D$ is $\precdot^{\alpha}$-closed by induction, so also $\precdot^{\alpha+1}$-closed. If $\gamma = \alpha$ (i.e., the minimal element in $\precdot_{\{\alpha+1\}}$) then $D = \{\beta, \gamma\}$ so it is trivially $\precdot^{\alpha+1}$-closed, so we may assume that $\beta=\alpha+1,\gamma \neq \alpha$. From this and the definition of $\precdot_{\{\alpha+1\}}$, it follows that $D \cap (\alpha+1)$ is the closed 2-initial segment of $\precdot^\alpha$ determined by $\alpha,\gamma$, so it is $\precdot^\alpha$-closed by induction.
    
    We are given $\gamma_2,\delta <\gamma_1\leq \alpha+1$ such that $\gamma_2,\gamma_1\in D$ and $\delta\precdot_{\{\gamma_1\}} \gamma_2$ and we have to show that $\delta \in D$. If $\gamma_1 = \alpha+1$, this follows directly from the definition of $D$, so we may assume  $\gamma_1\leq \alpha$. In this case $\gamma_2,\gamma_1 \in D \cap (\alpha+1)$, so $\delta \in D \cap (\alpha+1)$ since it is $\precdot^\alpha$-closed, as required.

    
    

    
    (\ref{dash}): Suppose $S\subseteq\alpha+1$ is finite and $\precdot^{\alpha+1}$-closed, and $\beta,\gamma\in\alpha+1\setminus S$ are distinct and $\gamma<\beta$.  
    
    
    If $S=\emptyset$ then (\ref{dash}) holds since singletons are closed. If $S\cup\{\beta\}\subseteq\alpha$ then (\ref{dash}) follows from the induction hypothesis. Otherwise, we either have $\alpha\in S$ or $\beta=\alpha$.
    \begin{itemize}
        \item If $\alpha\in S$ and $\beta,\gamma\in\alpha+1\setminus S$, we must have $\beta,\gamma<\alpha$. Suppose without loss of generality that  $\beta\precdot_{\{\alpha\}}\gamma$. Note that as $S$ is $\precdot^{\alpha+1}$-closed and $\beta \notin S$, $S \subseteq \set{x<\alpha}{x\precdot_{\{\alpha\}} \beta} \cup \{\alpha\}$. Thus, the set $\set{x<\alpha}{x\precdot_{\{\alpha\}} \beta} \cup \{\alpha,\beta\}$ is a closed 2-initial segment of $\prec^{\alpha}$ not containing $\gamma$. By (\ref{IC}) applied to $\precdot^{\alpha}$, this set witnesses that $S\cup\{\beta\}\not\vdash \gamma$.
        
        \item Otherwise we have $\beta=\alpha$. Take some initial segment $D$ of $\precdot_{\{\alpha\}}$ which contains $\gamma$ and $S$. Note that $D \cup \{\alpha\}$ is a closed 2-initial segment of $\precdot^\alpha$ so $\precdot^\alpha$-closed by induction. It follows that $D$ is also $\precdot^\alpha$-closed (clearly if $\gamma_1,\gamma_2 \in D$ and $\delta \precdot_{\{\gamma_1\}} \gamma_2$ then $\delta <\alpha$). Thus, $D$ witnesses that $S\cup \{\gamma\}\not\vdash \beta$.
    \end{itemize}
    
    We are left with the limit stage of the construction. Suppose $\delta<\omega_1$ is a limit ordinal and we constructed $\precdot^\alpha$ for all $\alpha<\delta$. As above, we only need to define $\precdot_{\{\delta\}}$ on $\delta+1$. 
    
    Fix a bijection $f:\omega\rightarrow\delta$. Let $\mathord{\precdot^*}$ be a 2-ordering system on $\delta$ determined by $\precdot^*_\emptyset$ being the usual order on $\delta$ and for each $\alpha < \delta$, $\mathord{\precdot^*_{\{\alpha\}}} = \mathord{\precdot^{\alpha}_{\{\alpha\}}}$. Note that (\ref{itm:usual ordering})--(\ref{dash}) hold for $\precdot^*$. Define an increasing union of finite $\precdot^*$-closed sets $(S_n)_{n<\omega}$ recursively as follows. Take $S_0=\emptyset$. Suppose we have defined the $\precdot^*$-closed sets $(S_k)_{k\leq n}$ such that each $S_k$ contains $f(0),\ldots,f(k-1)$ and $S_j$ for $j<k$. Let $\alpha < \delta$ be such that $\alpha+1$ contains $S_n$. By (\ref{IC}) (and (\ref{itm:order type}), i.e., the fact that closed 2-initial segments are finite), there is some finite $S_{n+1} \subseteq \alpha+1$ which is $\precdot^\alpha$-closed (so also $\precdot^*$-closed) and contains $S_n$ and $f(n)$. Note that we have $\bigcup_{n<\omega}S_n=\delta$. 

    Fix some $n<\omega$. Suppose that $m = |S_{n+1} \setminus S_n|$. By induction on $i\leq m$ define a strictly increasing sequence of injective functions (enumerations) $f_i:i\to S_{n+1} \setminus S_n$ such that for all $i<m$, $S_n \cup \im(f_i)$ is $\precdot^*$-closed (where $\im(f_i)$ is the image of $f_i$).
    
    For $i=0$, this is possible since $S_n$ is $\precdot^*$-closed. Given $f_i$ for $i<m$, define the following binary relation on $S_{n+1}\setminus \im(f_i)$: $x \preceq y$ if and only if $S_n \cup \im(f_i) \cup \{y\} \vdash x$. Note that $\precdot$ is antisymmetric: if $x\preceq y$ and $y \preceq x$ then $x=y$ by (\ref{dash}) (applied to the $\precdot^*$-closed set $S_n \cup \im(f_i)$). It is also clearly transitive, so it is a partial order on a finite set and thus has a minimal element $x_0$, and let $f_{i+1}(i)=x_0$. Let us show that $S_n \cup \im(f_{i+1}) = S_n \cup \im(f_i) \cup {\{x_0\}}$ is $\prec^*$-closed. Indeed, define the closure $\cl(S_n \cup \im(f_{i+1}))$ as the intersection of all $\precdot^*$-closed sets containing it (note that $S_{n+1}$ is closed, so this closure is finite and contained in $S_{n+1}$). Clearly $\cl(S_n \cup \im(f_{i+1}))$ is $\precdot^*$-closed and contains $S_n \cup \im(f_{i+1})$. If $ y \in \cl(S_n \cup \im(f_{i+1})) \setminus (S_n \cup \im(f_{i+1}))$ then $y \preceq x_0$ and $y \neq x_0$, contradicting the minimality of $x_0$. Thus, $\cl(S_n \cup \im(f_{i+1})) = S_n \cup \im(f_{i+1})$ and the latter is $\precdot^*$-closed as required. 

    Being done with the inductive construction, let $f^n = \bigcup_{i\leq m} f_i$; it is an enumeration of $S_{n+1} \setminus S_n$. For $x <\delta$, let $n_x = \max \set{n<\omega}{x\notin S_n}$.

    Finally define $\precdot_{\{\delta\}}$ by $x\precdot_{\{\delta\}} y$ if and only if $n_x < n_y$ or $n=n_x = n_y$ and $(f^{n})^{-1}(x) < (f^{n})^{-1}(y)$ (i.e., $x$ appears before $y$ in the enumeration given by $f^n$). Note that any initial segment of $\precdot_{\{\delta\}}$ is $\precdot^*$-closed.

    Let us check that (\ref{itm:usual ordering})--(\ref{dash}) hold. (\ref{itm:usual ordering}), (\ref{itm:increasing}) are clear and (\ref{itm:order type}) holds since every initial segment of the linear order $\precdot_{\{\delta\}}$ is finite (so that the order type of $\precdot_{\{\delta\}}$ is $\omega$). 
    
    We show (\ref{IC}). Suppose that $D = \set{x<\delta}{x\precdot_{\{\alpha\}} \beta} \cup \{\alpha,\beta\}$ for some $\beta<\alpha\leq \delta$ is a closed 2-initial segment of $\precdot^\delta$. If $\alpha<\delta$, $D$ is $\precdot^*$-closed (so also $\precdot^\delta$-closed) by the induction hypothesis, so suppose that $\alpha = \delta$. We are given $\gamma_2,\varepsilon<\gamma_1 \leq \delta$ such that $\gamma_1,\gamma_2 \in D$ and $\varepsilon\precdot_{\{\gamma_1\}} \gamma_2$ and we have to show that $\varepsilon \in D$. If $\gamma_1 = \delta$, this follows directly from the definition of $D$, so we may assume that $\gamma_1 < \delta$. But since $D \cap \delta$ is an initial segment of $\precdot_{\{\delta\}}$, it is $\precdot^*$-closed, so $\varepsilon \in D$ as required.

    Finally we show (\ref{dash}). Suppose that $S \subseteq \delta$ is finite and $\precdot^\delta$-closed, and $\alpha\neq \beta \in \delta \setminus S$. Then, $S \cup \{\alpha,\beta\} \subseteq \gamma < \delta$, and hence this follows from the induction hypothesis applied to $\precdot^\gamma$.

    \end{proof}

\section{Large cofinality and uncountable cardinals}

Recall \cref{def:VC cof}:  Given cardinals $\lambda\leq\theta\leq\kappa$, $\VCcof(\lambda,\theta,\kappa)$ is the least number $n\in\omega\cup\{\infty\}$ such that there is a $\lambda$-cofinal family $\Ff\subseteq[\kappa]^{<\theta}$ of VC-dimension $n$. In this section we study the case of an uncountable bound $\theta$ on the size of the members (i.e. the second parameter of $\VCcof$). 

We prove that when $\lambda\leq\theta\leq\kappa$ are infinite cardinals, $n<\omega$ and $\theta$ uncountable then the behavior of $\VCcof(\lambda,\theta,\kappa)$ depends on whether $\theta$ is regular or singular. 

In the regular case we get an optimal result: $\VCcof(\lambda,\theta,\kappa)=n+1$ if and only if $\kappa=\theta^{+n}$. In particular, $\lambda$ is not restricted. 

In the singular case, we show that in VC-classes the cofinality parameter $\lambda$ (the first parameter of $\VCcof$) in a VC-class can be at most $\cof(\theta)$. It is not clear what can be said about $\VCcof(\lambda,\theta,\kappa)$ when $\lambda \leq \cof(\theta)$, and see \cref{que:singular cardinals general question} for a precise formulation. 

However, we do show in \cref{prop:measurable} that relative to the existence of a measurable\footnote{See \cite{Shoenfield} for a comprehensive review of measurable cardinals.} cardinal it is consistent that there is an uncountable cardinal $\theta$ of cofinality $\omega$ such that for every $n<\omega$, $\VCcof(\omega,\theta,\theta^{+n})=n+1$.

\begin{nota}
    Given a $k$-ordering system $\precdot$ on a set $X$ and $s\in[X]^{<n}$, write $\dom(\mathord{\precdot_s})$ for the domain of the ordering $\precdot_s$ (so a subset of $X$).
\end{nota}


    
    
We now have a useful lemma, which will be used at its full generality in the next section.

\begin{lem}
\label{exists}
    Suppose $\theta$ is an infinite cardinal, $n<\omega$ and $X$ is a set satisfying $|X|\leq \theta^{+n}$. Then there exists an $(n+1)$-ordering system $\precdot$ on $X$ such that $\mathord{\precdot}_\emptyset=\mathord{<}_X$ and $(\theta^{+n},\ldots,\theta)$ bounds the order type of $\precdot$ (see \cref{def:bound on order type}).
\end{lem}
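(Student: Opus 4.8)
The plan is to prove the lemma by induction on $n$. Since the conclusion only asserts existence while the hypothesis concerns the cardinality of $X$, I would first normalize the data: identify $X$ with the ordinal $|X| \le \theta^{+n}$ and take $<_X$ to be the ordinal order (so $\precdot_\emptyset = {<_X}$ has order type $\le \theta^{+n}$, matching the $0$-th entry of the bound). The payoff of this normalization is a cardinality drop down the hierarchy: every proper initial segment $\set{x \in X}{x <_X x_0}$ determined by an element $x_0 \in X$ has cardinality at most $\theta^{+(n-1)}$, since $x_0 < \theta^{+n}$ forces $|x_0| \le \theta^{+(n-1)}$. This is exactly what is needed to feed each initial segment into the induction hypothesis.

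For the base case $n = 0$ the claim is immediate: a $1$-ordering system is just the well-order $\precdot_\emptyset = {<_X}$, whose order type is $|X| \le \theta = \theta^{+0}$, so it is bounded by $(\theta)$. For the inductive step I would assume the statement for $n-1$, set $\precdot_\emptyset := {<_X}$, and for each $x_0 \in X$ apply the induction hypothesis to the initial segment $Y_{x_0} := X_{\precdot_\emptyset x_0}$, which has cardinality $\le \theta^{+(n-1)}$, equipped with a well-order of minimal order type (\emph{not} the restriction of $<_X$), to obtain an $n$-ordering system $\precdot^{x_0}$ on $Y_{x_0}$ bounded by $(\theta^{+(n-1)},\ldots,\theta)$. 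I would then glue these by declaring, for nonempty $s \in [X]^{<n+1}$, $\precdot_s := \precdot^{x_0}_{s \setminus \{x_0\}}$ where $x_0 = \max_{<_X} s$. This is well-defined because every nonempty $s$ is uniquely of the form $t \cup \{x_0\}$ with $x_0 \in X$ and $t \subseteq Y_{x_0}$ (necessarily $x_0 = \max_{<_X} s$ and $t = s \setminus \{x_0\} \in [Y_{x_0}]^{<n}$). By construction $\precdot^{x_0}_t = \precdot_{t \cup \{x_0\}}$ for every $x_0$, so the recursive clause of \cref{def:n-ordering system} holds and $\precdot$ is an $(n+1)$-ordering system on $X$ with $\precdot_\emptyset = {<_X}$.

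It remains to verify the order type bound $(\theta^{+n},\ldots,\theta)$. For $s = \emptyset$ we have $\otp(\precdot_\emptyset) \le \theta^{+n}$ by the normalization, and for nonempty $s$ with $|s| = k$ the equality $\precdot_s = \precdot^{x_0}_{s \setminus \{x_0\}}$ together with the bound on $\precdot^{x_0}$ gives $\otp(\precdot_s) \le \theta^{+(n-1-(k-1))} = \theta^{+(n-k)}$, which is precisely the $k$-th entry of $(\theta^{+n},\ldots,\theta)$, completing the induction. The main obstacle, and the only place where the hypothesis $|X| \le \theta^{+n}$ is used essentially, is the cardinality bookkeeping of the first paragraph: one must guarantee that the initial segments shrink to size $\le \theta^{+(n-1)}$ so the induction hypothesis applies, which is exactly why it matters to take $<_X$ of minimal order type rather than an arbitrary well-order of $X$.
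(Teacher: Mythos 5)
Your proposal is correct and follows essentially the same route as the paper: induct on $n$, take $\precdot_\emptyset$ to be a well-order of $X$ of order type $|X|$ so that every proper initial segment has cardinality at most $\theta^{+(n-1)}$, apply the induction hypothesis to each initial segment, and glue by setting $\precdot_s = \precdot^{x_0}_{s\setminus\{x_0\}}$ for $x_0 = \max_{\precdot_\emptyset} s$. Your extra remarks on well-definedness and the order-type bookkeeping are correct elaborations of what the paper leaves implicit.
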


\begin{proof}
    By induction on $n$. We first set $\mathord{\precdot}_\emptyset$ to be some linear order on $X$ of order type $|X|$. If $n=0$ then we are done. Otherwise, since the set $X_{\precdot_{\emptyset}x_0}$ has cardinality at most $\theta^{+(n-1)}$ for every $x_0\in X$, we can apply the induction hypothesis to get an $n$-ordering systems $\precdot^{x_0}$ on each of these sets of order type bounded by $(\theta^{+(n-1)},...,\theta)$, and so for $s\in [X]^{<n}\setminus\{\emptyset\}$ we define $\mathord{\precdot}_{s}=\mathord{\precdot}^{x_0}_{s\setminus \{x_0\}}$, where $x_0=\max_{\precdot_\emptyset}s$.
\end{proof}

\begin{prop}\label{regular}
    Fix an uncountable regular cardinal $\theta$ and a natural number $n<\omega$, then any $(n+1)$-ordering system $\precdot$ on $\theta^{+n}$ with $(\theta^{+n},\ldots,\theta)$ bounding its order type has $\theta$-closures (see \cref{def:theta-closures}). In particular, we have $\VCcof(\theta,\theta,\theta^{+n})=n+1$.    
\end{prop}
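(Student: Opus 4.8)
The plan is to establish the substantive claim—that such a $\precdot$ has $\theta$-closures—by an explicit closure construction iterated $\omega$ times, and then to read off the equality $\VCcof(\theta,\theta,\theta^{+n})=n+1$ from \cref{exists} and \cref{cor:closures imply bounded VC dimension}.

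First I would record the consequence of the order-type hypothesis that makes everything work. Since $\precdot$ is an $(n+1)$-ordering system, $\precdot$-closedness (\cref{def:closed sets}) is tested against subsets $s$ of size $(n+1)-1=n$, and the bound $(\theta^{+n},\ldots,\theta)$ gives $\otp(\precdot_s)\leq\theta$ for every such $s$. Consequently, for any $s\in[\theta^{+n}]^{n}$ and any $b\in\dom(\precdot_s)$, the initial segment $\set{a}{a\precdot_s b}$ has order type $<\theta$ and hence cardinality $<\theta$; this \emph{downward} slice is the only thing the closure operation ever needs to add.

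Next I would run the closure. Fix $A\in[\theta^{+n}]^{<\theta}$, set $A_0=A$, and let
\[
A_{i+1}=A_i\cup\bigcup\set{\set{a}{a\precdot_s b}}{s\in[A_i]^{n},\ b\in A_i\cap\dom(\precdot_s)}.
\]
Here $A_{i+1}$ is the union of $A_i$ with at most $|A_i|^{n+1}$ sets (a quantity that is $<\theta$; for infinite $A_i$ it equals $|A_i|$), each of size $<\theta$. Since $\theta$ is regular, if $|A_i|<\theta$ then $|A_{i+1}|<\theta$, so by induction every $A_i$ has size $<\theta$. Putting $B=\bigcup_{i<\omega}A_i$, uncountable regularity gives $\cof(\theta)>\omega$, whence $\sup_i|A_i|<\theta$ and therefore $|B|<\theta$. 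Finally $B$ is $\precdot$-closed, because closedness is a finitary condition: given $s\subseteq B$ with $|s|=n$ and $a,b\in\dom(\precdot_s)$ with $b\in B$ and $a\precdot_s b$, the finite set $s\cup\{b\}$ already lies in some $A_i$, and then $a\in A_{i+1}\subseteq B$ by construction. Thus every $A\in[\theta^{+n}]^{<\theta}$ sits inside a $\precdot$-closed $B\in[\theta^{+n}]^{<\theta}$, i.e.\ $\precdot$ has $(\theta,\theta)$-closures, which is precisely $\theta$-closures (\cref{def:theta-closures}).

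For the final assertion of the proposition, \cref{exists} applied with $X=\theta^{+n}$ produces an $(n+1)$-ordering system on $\theta^{+n}$ whose order type is bounded by $(\theta^{+n},\ldots,\theta)$; the first part shows it has $\theta$-closures, and \cref{cor:closures imply bounded VC dimension} (with $\lambda$ and both of its cardinals taken equal to $\theta$) then gives $\VCcof(\theta,\theta,\theta^{+n})=n+1$. The only place where I expect genuine care—and the sole point where regularity of $\theta$, rather than mere uncountability, is used—is the bound $|A_{i+1}|<\theta$: a $<\theta$-indexed union of sets of size $<\theta$ remains below $\theta$ exactly because $\theta$ is regular. Everything else is routine, provided one keeps the index $n$ in $[A_i]^{n}$ aligned with the level of the ordering system and with the order-type bound $\otp(\precdot_s)\leq\theta$ at $|s|=n$.
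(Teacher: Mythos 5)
Your proof is correct and follows essentially the same route as the paper: the same iterated closure construction $A_0=A$, $A_{k+1}$ obtained by adding all $\precdot_s$-downward slices for $s\in[A_k]^n$, with regularity bounding each stage and uncountability (i.e.\ $\cof(\theta)>\omega$) bounding the union $A_\omega$, followed by the same appeal to \cref{exists} and \cref{cor:closures imply bounded VC dimension}. Your write-up is in fact somewhat more explicit than the paper's about where the order-type bound and the regularity are each used.
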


\begin{proof}
    Fix such an ordering system $\precdot$ and fix $A\in[\theta^{+n}]^{<\theta}$. Define $A=A_0$, and given that we have defined $A_k$ for some $k<\omega$, define: 
    
    $$A_{k+1}=A_k\cup\bigcup\left\{\{x\precdot_s \alpha\}:\alpha\in \dom\left(\precdot_s\right)\cap A_k,s\in [A_k]^n\right\}.$$
    
    By definition we have $|A_0|<\theta$, and by induction, the order type bound assumption and the regularity of $\theta$, it follows that $|A_{k+1}|<\theta$. Take $A_\omega=\bigcup_{k<\omega}A_k$ hence $|A_\omega|<\theta$ by the regularity and uncountability of $\theta$. Clearly it is $\precdot$-closed. 
    Hence $\precdot$ has $\theta$-closures. For the second part, apply \cref{cor:closures imply bounded VC dimension,exists} to the first part. 
\end{proof}

We move on to the case of singular cardinals.

\begin{lem}
\label{restrict}
    Suppose $\theta$ is singular of cofinality $\lambda$ and $\Ff\subseteq[\theta]^{<\theta}$ is $\lambda^+$-cofinal. Then there is some infinite cardinal $\kappa<\theta$ and $\Ff_0\subseteq\Ff$ with $\Ff_0\subseteq[\theta]^{<\kappa}$ that is $\lambda^+$-cofinal.
\end{lem}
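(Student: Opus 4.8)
The plan is to argue by contradiction using the singularity of $\theta$ directly. Fix an increasing sequence $\langle \theta_i : i<\lambda\rangle$ of infinite cardinals cofinal in $\theta$ (possible since $\lambda=\cof(\theta)$), and for each $i<\lambda$ set $\Ff_i := \Ff\cap[\theta]^{<\theta_i}$. It suffices to show that $\Ff_i$ is $\lambda^+$-cofinal (see \cref{cofinal}) for at least one $i<\lambda$: then $\kappa=\theta_i<\theta$ and $\Ff_0=\Ff_i$ are as required.

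So suppose, towards a contradiction, that for every $i<\lambda$ the family $\Ff_i$ fails to be $\lambda^+$-cofinal. By definition of $\lambda^+$-cofinality this furnishes, for each $i<\lambda$, a witness $A_i\in[\theta]^{\leq\lambda}$ which is contained in no member of $\Ff_i$, i.e. in no $S\in\Ff$ with $|S|<\theta_i$. Now form the union $A=\bigcup_{i<\lambda}A_i$. Since it is a union of $\lambda$-many sets each of size at most $\lambda$, and $\lambda$ is infinite, we have $|A|\leq\lambda<\lambda^+$.

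The contradiction comes from a single application of $\lambda^+$-cofinality of the full family $\Ff$: there is $S\in\Ff$ with $A\subseteq S$. Crucially $S\in[\theta]^{<\theta}$, so $|S|<\theta$, and since the $\theta_i$ are cofinal in $\theta$ there is some $i<\lambda$ with $|S|<\theta_i$; thus $S\in\Ff_i$. But $A_i\subseteq A\subseteq S$, so $S$ is a member of $\Ff_i$ containing $A_i$, contradicting the choice of $A_i$. Hence some $\Ff_i$ is $\lambda^+$-cofinal, finishing the proof.

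There is no serious obstacle here; the argument is essentially a pigeonhole/diagonalization over the cofinal sequence. The only point that genuinely uses the hypotheses is that a single covering set $S$ has size strictly below $\theta$, so that its cardinality is captured by one of the $\theta_i$ — this is exactly where the singularity of $\theta$ (and the bound $<\theta$ on members of $\Ff$) is needed, and it is what lets a bound chosen uniformly across all the witnesses $A_i$ be pushed below $\theta$.
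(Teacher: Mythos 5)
Your proof is correct and is essentially identical to the paper's: both diagonalize against a cofinal sequence $\langle\theta_i:i<\lambda\rangle$ in $\theta$, pick for each $i$ a witness $A_i$ of size $\leq\lambda$ not covered by any member of $\Ff$ of size $<\theta_i$, take the union $A$ (still of size $\leq\lambda$), and derive a contradiction from a single covering set $S\in\Ff$ whose cardinality falls below some $\theta_i$. The paper phrases the witnesses in terms of the minimal cardinality of a covering set exceeding $\theta_i$, but this is the same argument.
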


\begin{proof}
    Assume otherwise. Let $(\kappa_\alpha:\alpha<\lambda)$ be a strictly increasing sequence of cardinals with $\bigcup_{\alpha<\lambda}\kappa_\alpha=\theta$. Then there are $A_\alpha\in[\theta]^{<\lambda^+}$ for $\alpha<\lambda$ such that the minimal cardinality of a set $B_\alpha\in\Ff$ with $A_\alpha\subseteq B_\alpha$ is greater than $\kappa_\alpha$. Let $A=\bigcup_{\alpha<\lambda}A_\alpha$, so it is of size $\leq \lambda$ as well, hence there is some $B\in\Ff$ with $A\subseteq B$. Since $(\kappa_\alpha:\alpha<\lambda)$ is cofinal in $\theta$ and $|B|<\theta$, there is some $\alpha<\lambda$ with $|B|<\kappa_\alpha$. But $A_\alpha\subseteq A\subseteq B_\alpha$, a contradiction to our choice of $A_\alpha$.
\end{proof}

\begin{cor}
\label{singular}
    Suppose $\theta$ is singular of cofinality $\lambda$. Any $\lambda^+$-cofinal $\Ff\subseteq[\theta]^{<\theta}$ has infinite VC-dimension.
\end{cor}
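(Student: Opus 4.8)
The plan is to derive this as a consequence of \cref{restrict}, together with the lower bound \cref{lower1} and monotonicity (\cref{monotonicity}). Fix a $\lambda^+$-cofinal family $\Ff \subseteq [\theta]^{<\theta}$. By \cref{restrict} there is an infinite cardinal $\kappa < \theta$ and a subfamily $\Ff_0 \subseteq \Ff$ with $\Ff_0 \subseteq [\theta]^{<\kappa}$ which is still $\lambda^+$-cofinal. The point of passing to $\Ff_0$ is to cap the size of all members by a single cardinal $\kappa$ strictly below $\theta$; this is what later lets us land inside $[\kappa^{+n}]^{<\kappa}$ and invoke the lower bound. Since $\theta$ is singular it is a limit cardinal, so $\kappa^{+n} < \theta$ for every $n < \omega$.

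Next I would fix an arbitrary $n < \omega$ and show $\VC(\Ff) \geq n+1$; since $n$ is arbitrary this yields $\VC(\Ff) = \infty$. Consider the restriction $\Ff' = \Ff_0 \restriction \kappa^{+n} = \{\kappa^{+n} \cap S : S \in \Ff_0\}$, which makes sense as $\kappa^{+n} < \theta$. Each member of $\Ff'$ has size $<\kappa$, so $\Ff' \subseteq [\kappa^{+n}]^{<\kappa}$. Moreover $\Ff'$ is $(n+2)$-cofinal: as $\lambda = \cof(\theta)$ is infinite, $\lambda^+$-cofinality in particular covers every finite subset, so any $A \in [\kappa^{+n}]^{<n+2}$ is contained in some $S \in \Ff_0$, whence $A \subseteq \kappa^{+n} \cap S \in \Ff'$. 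Applying \cref{lower1} with the cardinal $\kappa$ then gives $\VC(\Ff') \geq n+1$.

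Finally I would transfer this lower bound back to $\Ff$ using \cref{monotonicity}: $\Ff'$ is a subfamily of $\Ff_0 \restriction \kappa^{+n}$ and $\Ff_0 \subseteq \Ff$, so $\VC(\Ff') \leq \VC(\Ff_0) \leq \VC(\Ff)$. Hence $\VC(\Ff) \geq n+1$ for every $n < \omega$, i.e.\ $\VC(\Ff) = \infty$.

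I expect the only real content to sit in \cref{restrict}, which is already available; granting it, the argument is routine bookkeeping of the cofinality and restriction conditions. The one point to be careful about is that a single $\kappa$ must work uniformly for all $n$, so that the members of every restricted family genuinely have size $<\kappa$ and \cref{lower1} applies — this is precisely why \cref{restrict} is phrased to produce one bounding cardinal $\kappa < \theta$, rather than allowing the size bound to depend on the particular finite set being covered.
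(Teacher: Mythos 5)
Your proof is correct and follows the paper's argument essentially verbatim: apply \cref{restrict} to get a uniform bound $\kappa<\theta$, restrict to $\kappa^{+n}$, and combine \cref{lower1} with \cref{monotonicity}. The only difference is presentational — you prove $\VC(\Ff)\geq n+1$ directly for each $n$, whereas the paper phrases it as a contradiction with the assumption $\VC(\Ff)=n<\omega$.
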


\begin{proof}
    Let $\Ff$ be as above. Apply \cref{restrict} to get some $\kappa$ and $\Ff_0\subseteq\Ff$ as there. Since $\theta$ is singular, it is a limit cardinal, and because $\kappa<\theta$ we get $\kappa^{+n}<\theta$ for every $n<\omega$. Let $\Ff_0' = \set{s \cap \kappa^{+n}}{s\in \Ff_0}$. Assuming towards a contradiction that $\VC(\Ff)=n<\omega$, and in particular $\VC(\Ff_0')\leq n$ by \cref{monotonicity}. This gives us $\VCcof(\omega,\kappa,\kappa^{+n})<n+1$, a contradiction to \cref{lower1}.
\end{proof}

Therefore, for a singular $\theta$, we have a critical point $\lambda=\cof(\theta)$ which would be of interest.

We now demonstrate that consistently (assuming the consistency of a measurable cardinal), at least for some cardinal this is indeed the critical value.

Recall the following celebrated result of Prikry, originating in \cite{Prikry}.

\begin{fct}{\cite[Theorem 1.10]{Gitik}}
\label{Prikry}
    Assume the existence of a measurable cardinal $\theta$. Then there exists a forcing notion $\mathbb{P}$ such that the following hold:
    \begin{enumerate}
        \item $\mathbb{P}$ preserves cardinals.
        \item $\Vdash_{\mathbb{P}}\theta\text{ is uncountable of cofinality $\omega$}$.
    \end{enumerate}
\end{fct}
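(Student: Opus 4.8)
The plan is to realize $\mathbb{P}$ as \emph{Prikry forcing} relative to a normal $\theta$-complete ultrafilter $U$ on the measurable cardinal $\theta$. A condition is a pair $(s,A)$ where $s\in[\theta]^{<\omega}$ is a finite set (the \emph{stem}), $A\in U$, and $\max(s)<\min(A)$; one sets $(t,B)\leq(s,A)$ iff $s$ is an initial segment of $t$, $t\setminus s\subseteq A$, and $B\subseteq A$. I would also isolate the \emph{direct extension} order $\leq^{*}$, where $(t,B)\leq^{*}(s,A)$ means $(t,B)\leq(s,A)$ together with $s=t$ (only the measure-one part shrinks). The first, easy step is to read off from a generic filter $G$ the set $x_G=\bigcup\set{s}{(s,A)\in G}$; genericity forces $x_G$ to be an unbounded subset of $\theta$ of order type $\omega$, so in the extension the cardinal $\theta$ has cofinality $\omega$. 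This gives the cofinality half of clause (2), pending the preservation statements.

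Next I would establish the $\theta^{+}$-chain condition, which handles cardinals above $\theta$. The point is that any two conditions sharing a stem, $(s,A)$ and $(s,B)$, are compatible via $(s,A\cap B)$, using $A\cap B\in U$ by $\theta$-completeness. Since $|[\theta]^{<\omega}|=\theta$, any antichain contains at most one condition per stem and so has size at most $\theta$; thus $\mathbb{P}$ is $\theta^{+}$-c.c.\ and preserves all cardinals $\geq\theta^{+}$.

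The heart of the argument is the \textbf{Prikry Lemma}: for every sentence $\varphi$ of the forcing language and every condition $(s,A)$ there is $B\in U$ with $B\subseteq A$ such that $(s,B)$ decides $\varphi$ (i.e.\ $(s,B)\Vdash\varphi$ or $(s,B)\Vdash\neg\varphi$), so the stem alone, after thinning the measure-one part, already settles $\varphi$. I would prove this by a Rowbottom-style partition argument: for each $n<\omega$ color $a\in[A]^n$ (with $\min a>\max s$) by whether $(s\cup a,C)$ has a direct extension forcing $\varphi$, one forcing $\neg\varphi$, or neither, and use that $U$ is Ramsey (normal $\theta$-complete measures are homogeneous for finite colorings of $[\theta]^{<\omega}$) to extract $A_n\in U$ homogeneous for each $n$; intersecting them, again by $\theta$-completeness, yields a single $B\in U$ homogeneous at every level. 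A combinatorial argument by induction on the minimal number of points one must add to the stem in order to decide $\varphi$ then shows this number is $0$, i.e.\ $(s,B)$ itself decides $\varphi$; the induction uses homogeneity to rule out the ``undecided'' color at the relevant level. I expect this lemma to be the main obstacle, both because the bookkeeping of the three colorings is delicate and because one must combine the partition property with $\theta$-completeness in exactly the right order.

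From the Prikry Lemma the remaining preservation facts follow smoothly. Given a name $\dot g$ for a function $g\colon\alpha\to\Ord$ with $\alpha<\theta$ and a condition $(s,A)$, applying the lemma to each statement ``$\dot g(\beta)=\check\gamma$'' produces, for every $\beta<\alpha$, a direct extension deciding $\dot g(\beta)$; intersecting the corresponding measure-one sets over the $<\theta$ many coordinates $\beta$ (once more by $\theta$-completeness) gives a single $(s,B)\leq^{*}(s,A)$ deciding all values of $\dot g$ at once. Hence $(s,B)$ forces $\dot g$ to equal a ground-model function, and in particular $\mathbb{P}$ adds no new bounded subset of $\theta$ and no surjection from any $\alpha<\theta$ onto $\theta$. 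Therefore no cardinal $\leq\theta$ is collapsed, $\theta$ itself remains an uncountable cardinal, and together with the $\theta^{+}$-c.c.\ all cardinals are preserved, establishing clause (1) and completing clause (2).
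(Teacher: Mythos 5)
The paper states this result as a black-box Fact cited from Gitik's Handbook chapter and gives no proof, so the relevant comparison is with the standard argument in that reference; your outline --- Prikry forcing from a normal measure $U$, the Prikry sequence giving $\cof(\theta)=\omega$, the $\theta^{+}$-c.c.\ via common stems and $\theta$-completeness, and the Prikry Lemma proved by Rowbottom homogeneity --- is exactly that argument, and those three steps are correct as sketched.

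However, your final preservation paragraph contains a genuine error. You claim that for any name $\dot g$ for a function from some $\alpha<\theta$ into $\Ord$, a single direct extension $(s,B)\leq^{*}(s,A)$ decides all values of $\dot g$, so that $\dot g$ is forced to equal a ground-model function. This is false, and it is contradicted by your own first step: the Prikry sequence is itself a new function from $\omega$ into $\theta$ with $\omega<\theta$, and no pure extension can decide, say, the first Prikry point above $\max(s)$, since any $(s,B)$ has extensions placing different elements of $B$ next. The Prikry Lemma decides \emph{sentences}, not ordinal values: applying it to each statement ``$\dot g(\beta)=\check\gamma$'' separately may simply return the negative decision for every $\gamma$ you test. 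What the argument actually yields is the weaker (and sufficient) statement that no new \emph{bounded} subsets of $\theta$ are added: for a name $\dot a$ for a subset of $\alpha<\theta$ the decisions are binary, so one may intersect the $\alpha$-many measure-one sets obtained from the Lemma (by $\theta$-completeness; equivalently, $\leq^{*}$ is $\theta$-closed) to get $(s,B)$ deciding ``$\check\beta\in\dot a$'' for all $\beta<\alpha$. This preserves all cardinals below $\theta$, but it does not by itself rule out a surjection from some $\alpha<\theta$ onto $\theta$; for that you must use that $\theta$ is measurable, hence inaccessible, so in $V[G]$ it is a limit of (preserved) cardinals and therefore remains a cardinal, and it is uncountable since $\omega_1^{V}$ survives. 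With that repair, combined with your $\theta^{+}$-c.c.\ step, clause (1) holds and clause (2) follows, completing the proof.
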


\begin{prop} \label{prop:measurable}
    Assume the consistency of a measurable cardinal. Then it is consistent that there is an uncountable cardinal $\theta$ of cofinality $\omega$ such that for every $n<\omega$ we have $\VCcof(\omega,\theta,\theta^{+n})=n+1$.
\end{prop}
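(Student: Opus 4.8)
The plan is to build, in the ground model where $\theta$ is measurable, the ordering systems supplied by \cref{exists}, then force with Prikry forcing and argue that these systems still furnish closures in the singular extension, using the preservation of cardinals together with the absoluteness of $\precdot$-closedness.

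For the lower bound, note that in any model in which $\theta$ and $\theta^{+n}$ are cardinals, every $\omega$-cofinal family covers all finite sets and is in particular $(n+2)$-cofinal, so \cref{monotonicity} gives $\VCcof(\omega,\theta,\theta^{+n}) \geq \VCcof(n+2,\theta,\theta^{+n})$, and the right-hand side is $\geq n+1$ by \cref{lower1}. Since the forcing of \cref{Prikry} preserves cardinals (so the values $\theta^{+n}$ are unchanged), this inequality holds in the extension for every $n$, giving ``$\geq n+1$'' throughout.

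For the upper bound, work first in $V$, where $\theta$ is measurable and hence regular and uncountable. For each $n$, apply \cref{exists} to obtain an $(n+1)$-ordering system $\precdot^{(n)}$ on $\theta^{+n}$ of order type bounded by $(\theta^{+n},\ldots,\theta)$; by \cref{regular} this system has $\theta$-closures in $V$, so in particular every finite $A\subseteq\theta^{+n}$ has a $\precdot^{(n)}$-closure $\bar A\in V$ with $|\bar A|<\theta$. Now let $\mathbb{P}$ be the Prikry forcing of \cref{Prikry} and pass to $V[G]$, where $\theta$ is uncountable of cofinality $\omega$ and cardinals are preserved. Fixing $n$ and arguing in $V[G]$, the relation $\precdot^{(n)}\in V$ is still an $(n+1)$-ordering system on $\theta^{+n}$: each $\precdot^{(n)}_s$ remains a linear order (a first-order, hence absolute, property) and remains well-founded (well-foundedness is absolute between transitive models), and the nested initial-segment conditions of \cref{def:n-ordering system} are computed from the fixed relation $\precdot^{(n)}$ and are therefore unchanged. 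The key point is that $\precdot^{(n)}$ has $(\omega,\theta)$-closures in $V[G]$. Here I would \emph{not} iterate the closure operation as in \cref{regular}: that argument relies on the regularity of $\theta$ and genuinely breaks once $\cof(\theta)=\omega$, since a countable union of sets of size $<\theta$ can have size $\theta$. Instead, given a finite $A\subseteq\theta^{+n}$ in $V[G]$, observe that each element of $A$ is an ordinal and ordinals are absolute, so $A\in V$; its ground-model closure $\bar A$ then contains $A$, has $|\bar A|<\theta$ in $V$ and hence in $V[G]$ (cardinals are preserved), and is $\precdot^{(n)}$-closed in $V[G]$, because $\precdot^{(n)}$-closedness (\cref{def:closed sets}) quantifies only over the finite subsets $s\subseteq \bar A$ and over elements of the $V$-sets $\dom(\precdot^{(n)}_s)$, all of which are identical in $V$ and $V[G]$. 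Thus $\bar A$ witnesses the required closure.

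Having established $(\omega,\theta)$-closures in $V[G]$, \cref{cor:closures imply bounded VC dimension} yields $\VCcof(\omega,\theta,\theta^{+n})\leq n+1$ there, and together with the lower bound this gives equality. Since the systems $\precdot^{(n)}$ are chosen independently in $V$, the single model $V[G]$ with the single cardinal $\theta$ works simultaneously for every $n$, proving the proposition. The main obstacle is exactly the closure step in the singular extension; its resolution is the absoluteness observation that ground-model closures of finite sets stay closed and stay small, which sidesteps the failure of the regular-cardinal iteration.
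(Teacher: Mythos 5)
Your proof is correct and takes essentially the same route as the paper: construct the witnesses in $V$ using the regularity of the measurable $\theta$, force with Prikry forcing, and conclude via preservation of cardinals, the fact that forcing adds no new finite subsets of ground-model sets, and absoluteness. The only cosmetic difference is that you verify $(\omega,\theta)$-closures of the ordering system in $V[G]$ and then invoke \cref{cor:closures imply bounded VC dimension}, whereas the paper argues directly that the family $\Ff_n$ of $\precdot$-closed sets of size $<\theta$ remains $\omega$-cofinal with the same VC-dimension; these amount to the same thing.
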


\begin{proof}
    Let $\theta$ be measurable and let $\mathbb{P}$ be the forcing notion of \cref{Prikry}. Since $\theta$ is measurable, it is regular, so by \cref{regular} for every $n<\omega$ there is some $\Ff_n$ witnessing $\VCcof(\theta,\theta,\theta^{+n})=n+1$. Let $G\subseteq\mathbb{P}$ be a generic filter and fix $n<\omega$. By \cref{Prikry} the cardinal $\theta$ is singular and of cofinality $\omega$ in $V[G]$, and the $n$'th successor of $\theta$ in $V[G]$ is its $n$'th successor in $V$, so it remains to show that $\Ff_n$ is $\omega$-cofinal and $\VC(\Ff_n)=n+1$. For the first of these, forcing cannot add a new finite subset to an existing set, so by the $\theta$-cofinality of $\Ff_n$ in $V$ we get $\omega$-cofinality in $V[G]$. The latter follows from absoluteness, hence we are done. 
\end{proof}

Despite of this consistency result, the general case of $\lambda$ in the range $\omega\leq \lambda\leq \cof(\theta)$ remains to be considered.

\begin{que} \label{que:singular cardinals general question}
    Is it true that for every singular cardinal $\theta$ and $n<\omega$ we have $\VCcof(\cof(\theta),\theta,\theta^{+n})=n+1$? Is it consistent?
\end{que}

\section{Cohen-generic ordering systems on $\omega_n$}
\label{Section: forcing}

Given the second result mentioned in \cref{unctbl}, we would like to see whether there exists a cofinal family $\Ff\subseteq[\omega_n]^{<\omega}$ of VC-dimension $n+1$. The goal of this section is to establish its consistency. 

So far, to construct $(n+1)$-ordering systems, we constructed it inductively, first choosing $\precdot_\emptyset$ and then defining $\precdot_s$ for nonempty $s$ by an inductive construction. Here we start with an $n$-ordering system and use forcing to construct $\precdot_s$ for $s$ of size $n$. The domain of these new orders can be already defined without forcing, and we want to define these in a uniform way that allows us to argue about these new orders as well as old orders (where the domain is already defined). This motivates the definition of a ``pre-domain'' which we now explain.

\begin{defn}
    Given an $n$-ordering system $\precdot$ on a set $X$ and $s\in[X]^k$ for $0<k\leq n$, write $\Min_{\precdot} s$ for the unique element of the set $s\setminus s_{k-1}$, where $s_{k-1}$ is the subset given by \Cref{Lemma: inductive maximum}. Namely, it is the unique $y\in s$ such that $y\in\dom(\precdot_{s\setminus\{y\}})$.
\end{defn}


\begin{nota}
\label{Notation: pre-domain}
    Given an $n$-ordering system $\precdot$ on a set $X$ and given $s\in[X]^{\leq n}$, if $s\neq\emptyset$, we write $\predom(\precdot_s):=\dom(\precdot_{s'})_{\precdot_{s'}\mathord{\Min_{\precdot}} s} = \set{x\in \dom(\precdot_{s'})}{x \precdot_{s'} \Min_{\precdot} s}$, 
    and if $s=\emptyset$, we write $\predom(\precdot_\emptyset)=X$.
\end{nota}

\begin{rem}
\label{rem: dom is predom}
    By applying \Cref{rem:domain of <<s union a,Lemma: inductive maximum}, if $s\in[X]^k$ for $k<n$ then we have $\predom(\precdot_s)=\dom(\precdot_s)$.
\end{rem}

We have the following lemma.

\begin{lem}
\label{Lemma: two predomains}
    Suppose $\precdot$ is an $n$-ordering system on a set $X$ and $x_0\in X$. Then for any $t\in [\dom(\precdot_{\{x_0\}})]^{<n}$ we have $\predom(\precdot^{x_0}_t)=\predom(\precdot_{t\cup\{x_0\}})$.
\end{lem}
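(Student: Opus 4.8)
The plan is to reduce everything to the single identity $\Min_\precdot(t\cup\{x_0\}) = \Min_{\precdot^{x_0}} t$, after which both predomains will be computed from literally the same ordering and the same cutoff point. Throughout I would write $Y = \dom(\precdot_{\{x_0\}})$, which by \cref{rem:domain of <<s union a} equals the $\precdot_\emptyset$-initial segment $X_{\precdot_\emptyset x_0}$ on which $\precdot^{x_0}$ is an $(n-1)$-ordering system (see \cref{def:n-ordering system}).

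First I would dispose of the trivial case $t=\emptyset$: by definition $\predom(\precdot^{x_0}_\emptyset) = Y$, while $\predom(\precdot_{\{x_0\}})$ unwinds to $\set{x \in \dom(\precdot_\emptyset)}{x \precdot_\emptyset x_0} = X_{\precdot_\emptyset x_0} = Y$, since $\Min_\precdot \{x_0\} = x_0$. So both sides equal $Y$.

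Now assume $t \neq \emptyset$ and set $k = |t|$, so $1 \leq k \leq n-1$. The key step is the identity for the minima. Because $t \subseteq Y = X_{\precdot_\emptyset x_0}$, the element $x_0$ is the $\precdot_\emptyset$-maximum of $s := t \cup \{x_0\}$; and as observed in the uniqueness part of \cref{Lemma: inductive maximum}, one has $x_0 \notin \dom(\precdot_u)$ for any nonempty $u \subseteq t$, so the $\precdot_\emptyset$-maximum of a set of size $\geq 2$ always lies in the distinguished subset $s_{|s|-1}$. Hence $\Min_\precdot s \neq x_0$, so $m := \Min_\precdot s \in t$. The defining property of $m$ then gives $m \in \dom(\precdot_{s \setminus \{m\}}) = \dom(\precdot_{(t\setminus\{m\})\cup\{x_0\}}) = \dom(\precdot^{x_0}_{t \setminus \{m\}})$, where the last equality is the definition of $\precdot^{x_0}$. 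But this says exactly that $m$ witnesses the defining property of $\Min_{\precdot^{x_0}} t$ for the $(n-1)$-ordering system $\precdot^{x_0}$, so by the uniqueness clause of \cref{Lemma: inductive maximum} applied to $\precdot^{x_0}$ we conclude $m = \Min_{\precdot^{x_0}} t$.

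Finally I would assemble the pieces. Set $t' = t \setminus \{m\}$; then the distinguished subset of $s$ is $s' = s \setminus \{m\} = t' \cup \{x_0\}$, and by \cref{def:n-ordering system} we have the equality of orderings $\precdot_{s'} = \precdot_{t'\cup\{x_0\}} = \precdot^{x_0}_{t'}$ (here $|t'| = k-1 \leq n-2$, so this instance of $\precdot^{x_0}$ is genuinely defined). Substituting this, together with $m = \Min_\precdot s = \Min_{\precdot^{x_0}} t$, into the definition of $\predom$ yields
\[
\predom(\precdot_{t\cup\{x_0\}}) = \set{x \in \dom(\precdot_{s'})}{x \precdot_{s'} m} = \set{x \in \dom(\precdot^{x_0}_{t'})}{x \precdot^{x_0}_{t'} m} = \predom(\precdot^{x_0}_t),
\]
as required. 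The only real obstacle is the minima identity, and in particular the bookkeeping that $x_0$ cannot be the minimum (being the $\precdot_\emptyset$-top of $s$); once that is settled, the two predomains are given by the same formula and the conclusion is immediate.
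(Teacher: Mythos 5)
Your proof is correct and follows essentially the same route as the paper's: both reduce the statement to the identity $\Min_{\precdot}(t\cup\{x_0\})=\Min_{\precdot^{x_0}}t$ and settle it via the uniqueness clause of \cref{Lemma: inductive maximum}. The only (harmless) difference is direction: the paper starts from $m=\Min_{\precdot^{x_0}}t$, which lies in $t$ automatically and thus skips your extra step ruling out $m=x_0$, while you start from $\Min_{\precdot}(t\cup\{x_0\})$ and also spell out the $t=\emptyset$ case that the paper dismisses in one line.
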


\begin{proof}
     We may assume $t\neq \emptyset$. By the definition of $\predom$, the lemma would follow once we prove that $\Min_{\precdot}(t\cup\{x_0\})=\Min_{\precdot^{x_0}}t$. Writing $m$ for the latter, we get:
    $$m\in\dom(\precdot^{x_0}_{t\setminus\{m\}})=\dom(\precdot_{(t\cup\{x_0\})\setminus\{m\}}),$$
    which is the definition of $\Min_{\precdot^{x_0}}t$.
\end{proof}

We immediately get the following.

\begin{cor}
\label{Corollary: bottom-up}
    Suppose $\precdot$ is an $n$-ordering system on a set $X$, and for every $s\in[X]^n$ we are given a well-order $<_s$ on the set $\predom(\precdot_s)$. Then the map $\precdot':[X]^{\leq n}\rightarrow \mathcal{P}(X^2)$ which extends $\precdot$ and assigns $\mathord{\precdot}'(s)=\mathord{<}_s$ to any $s\in[X]^n$ is an $(n+1)$-ordering system on $X$.
\end{cor}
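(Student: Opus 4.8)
The plan is to prove this by induction on $n$, directly verifying the two recursive conditions of \cref{def:n-ordering system}. Throughout, abbreviate $Y_{x_0} = X_{\precdot_\emptyset x_0}$ for the relevant $\precdot_\emptyset$-initial segment. In both the base case and the inductive step the component $\precdot'_\emptyset = \precdot_\emptyset$ is left unchanged, hence is still a well-order on $X$, so all the content lies in showing that for every $x_0 \in X$ the induced map $(\precdot')^{x_0}_t := \precdot'_{t \cup \{x_0\}}$, defined for $t \in [Y_{x_0}]^{<n}$, is an $n$-ordering system on $Y_{x_0}$. Note that for any such $t$ we have $x_0 \notin t$ (as $x_0 \notin Y_{x_0}$) and $t \cup \{x_0\} \in [X]^{\leq n}$, so $(\precdot')^{x_0}$ is genuinely defined.

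For the base case $n = 1$ the system $\precdot$ consists only of the well-order $\precdot_\emptyset$, and we are handed a well-order $\mathord{<}_{\{x_0\}}$ on $\predom(\precdot_{\{x_0\}})$ for each $x_0$. Unwinding \cref{Notation: pre-domain} gives $\predom(\precdot_{\{x_0\}}) = Y_{x_0}$, so $(\precdot')^{x_0}_\emptyset = \precdot'_{\{x_0\}} = \mathord{<}_{\{x_0\}}$ is a well-order on $Y_{x_0}$; since a $1$-ordering system is precisely such a well-order, $\precdot'$ is a $2$-ordering system.

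For the inductive step with $n > 1$, fix $x_0 \in X$. The key point is that the candidate $(\precdot')^{x_0}$ on $Y_{x_0}$ is exactly the extension produced by the corollary \emph{at level $n-1$} applied to the $(n-1)$-ordering system $\precdot^{x_0}$. Indeed, $\precdot^{x_0}$ is an $(n-1)$-ordering system on $Y_{x_0}$ by \cref{def:n-ordering system}, and for each $u \in [Y_{x_0}]^{n-1}$ we have, by \cref{Lemma: two predomains}, $\predom(\precdot^{x_0}_u) = \predom(\precdot_{u \cup \{x_0\}})$; thus the supplied order $\mathord{<}_{u \cup \{x_0\}}$ is a well-order on $\predom(\precdot^{x_0}_u)$, which is precisely the data the induction hypothesis consumes. (Here $u \subseteq Y_{x_0} = \dom(\precdot_{\{x_0\}})$, so \cref{Lemma: two predomains} does apply.) Applying the corollary at level $n-1$ to $\precdot^{x_0}$ together with the family $\{\mathord{<}_{u \cup \{x_0\}} : u \in [Y_{x_0}]^{n-1}\}$ yields an $n$-ordering system on $Y_{x_0}$.

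It then remains only to check that this $n$-ordering system coincides with $(\precdot')^{x_0}$, a routine bookkeeping comparison: for $|t| < n-1$ both maps return $\precdot_{t \cup \{x_0\}}$ (since $|t \cup \{x_0\}| < n$ and $\precdot'$ extends $\precdot$), while for $|t| = n-1$ both return $\mathord{<}_{t \cup \{x_0\}}$ (since $|t \cup \{x_0\}| = n$). Hence $(\precdot')^{x_0}$ is an $n$-ordering system for every $x_0$, which completes the induction. I do not anticipate any serious obstacle: the only delicate points are the correct identification of the pre-domains through \cref{Lemma: two predomains}, which guarantees the inductively supplied orders live on exactly the right sets, and the final verification that $\precdot'$ and the inductively built system agree so that the induction hypothesis really produces $(\precdot')^{x_0}$ rather than some neighboring object.
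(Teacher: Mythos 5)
Your proof is correct and follows essentially the same route as the paper's: induction on $n$, with the base case unwound from the definition of a $2$-ordering system, and the inductive step consisting of applying the induction hypothesis to $\precdot^{x_0}$ with the orders $\mathord{<}_{u\cup\{x_0\}}$, using \cref{Lemma: two predomains} to identify $\predom(\precdot^{x_0}_u)$ with $\predom(\precdot_{u\cup\{x_0\}})$, and then observing that the resulting system is exactly $(\precdot')^{x_0}$. The only difference is that you spell out the base case and the final bookkeeping comparison slightly more explicitly than the paper does.
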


\begin{proof}
    By induction on $n$, with the case of $n=1$ being the definition of a 2-ordering system. Suppose $n>1$ and fix $x_0\in X$. In order to show that $\precdot'$ is an $(n+1)$-ordering system on $X$, we must show that $(\precdot')^{x_0}$ is an $n$-ordering system on $\dom(\precdot_{x_0})$. 
    
    Now, the assignment $(\precdot^{x_0})':[\dom(\precdot_{\{x_0\}})]^{<n-1}\rightarrow \mathcal{P}([\dom(\precdot^{x_0})]^2)$ extending $\precdot^{x_0}$ and assigning to each $t\in[\dom(\precdot_{\{x_0\}})]^{n-1}$ the orderings $\mathord{<'}_t:=\mathord{<}_{t\cup\{x_0\}}$ on each $\predom(\precdot_{t\cup \{x_0\}})$ (which is equal to $\predom(\precdot^{x_0}_t)$ by \Cref{Lemma: two predomains}) is an $n$-ordering system on $\dom(\precdot_{\{x_0\}})$ by the induction hypothesis, which is exactly the assignment $(\precdot')^{x_0}$. Thus the proof is complete.
\end{proof}

We note that if there is a uniform bound $\alpha$ on the order type of the orderings $<_s$, and $\precdot$ has order type bounded by $(\alpha_{n-1},...,\alpha_0)$ (see \Cref{def:bound on order type}), then $\precdot'$ as constructed in \Cref{Corollary: bottom-up} has order type bounded by $(\alpha_{n-1},...,\alpha_0,\alpha)$.

\begin{defn}
\label{Definition: nice}
    Let $(X,<^X)$ be a well-ordered set. For an ordinal $\alpha\leq \omega$, write $\alpha^{<^X}$ for the first $\alpha$ elements of $<^X$ (or all of $X$ if $\otp(<^X)<\alpha$). An $n$-ordering system $\precdot$ on $X$ is \emph{nice} if it satisfies the following:

    \begin{enumerate}
        \item We have $\mathord{\precdot_\emptyset}=\mathord{<^X}$.
        \item Writing $\alpha=\min\{|\dom(\precdot_s)|,\omega\}$ we have $(\alpha^{<^X},<^X)=(\alpha^{\precdot_s},\precdot_s)$ (or equivalently, $\alpha^{<^X}$ is contained in $\dom(\precdot_s)$ as an initial segment and $<^X\restriction \alpha^{<^X}=\mathord{\precdot_s}\restriction\alpha^{<^X}$).
    \end{enumerate}    
\end{defn}

\begin{rem}
\label{Remark: nice reduction}
    Note that for any nice $n$-ordering system on $(X,<^X)$ and $x_0\in X$, writing 
    $\alpha=\min\{|\dom(\mathord{\precdot}_{\{x_0\}})|,\omega\}$, since $\alpha^{<^X}=\alpha^{\precdot_{\{x_0\}}}$ and the two orderings $<^X,\precdot_{\{x_0\}}$ agree on this set, we have that $\precdot^{x_0}$ is a nice $(n-1)$-ordering system on $(X_{<^X x_0},\precdot_{\{x_0\}})$.
\end{rem}

The following proof is an elaboration of \Cref{exists}.

\begin{lem}
\label{exists1}
    Suppose $(X,<^X)$ is an infinite well-ordered set satisfying $\otp(\mathord{<^X})\leq \omega_n$ for some $n>0$. Then there exists a nice $n$-ordering system $\precdot$ on $X$ with $(\omega_n,\ldots,\omega_1)$ bounding the order type of $\precdot$ (see \cref{def:bound on order type}).    
\end{lem}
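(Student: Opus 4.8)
The plan is to re-run the inductive construction of \cref{exists}, but with two modifications dictated by niceness: we must keep $\precdot_\emptyset = \mathord{<^X}$ rather than replacing it by an order of type $|X|$, and at each recursive step we must \emph{re-order} the initial segments so that the induction hypothesis can be applied. I would proceed by induction on $n>0$. The base case $n=1$ is immediate: a nice $1$-ordering system on $(X,<^X)$ is just $\precdot_\emptyset = \mathord{<^X}$, which satisfies both clauses of \cref{Definition: nice} trivially and has $\otp(\precdot_\emptyset) = \otp(<^X) \le \omega_1$, as the bound $(\omega_1)$ demands.

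For the inductive step, assume the statement for $n-1$ and let $\otp(<^X) \le \omega_n$. Set $\precdot_\emptyset = \mathord{<^X}$. The key observation is that each proper initial segment $X_{<^X x_0}$ has order type $<\omega_n$, hence cardinality at most $\aleph_{n-1}$. The obstacle is that the \emph{restriction} of $<^X$ to $X_{<^X x_0}$ may have order type strictly between $\omega_{n-1}$ and $\omega_n$, so it cannot be fed directly into the induction hypothesis. I would instead choose a \emph{new} well-order $\prec^{x_0}$ on $X_{<^X x_0}$ of order type $\le \omega_{n-1}$ — possible by the cardinality bound — whose first $\omega$ elements coincide, in order, with those of $<^X$: concretely, place the $<^X$-least $\omega$ elements first (or all of $X_{<^X x_0}$ if it is finite), then well-order the remaining at most $\aleph_{n-1}$ elements arbitrarily behind them; since $\omega_{n-1}$ is additively indecomposable the total order type remains $\le \omega_{n-1}$. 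Applying the induction hypothesis to $(X_{<^X x_0}, \prec^{x_0})$ yields a nice $(n-1)$-ordering system $\precdot^{x_0}$ with order type bounded by $(\omega_{n-1},\ldots,\omega_1)$, whose bottom level is $\prec^{x_0}$ by niceness. As in \cref{exists}, I then define $\precdot_s := \precdot^{x_0}_{s\setminus\{x_0\}}$ for nonempty $s$, where $x_0 = \max_{<^X} s$; this makes the reduction of $\precdot$ at $x_0$ equal to $\precdot^{x_0}$, so $\precdot$ is an $n$-ordering system by \cref{def:n-ordering system}. (Finite initial segments are handled by the evident restriction of $<^X$ and cause no difficulty.)

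It then remains to check the order-type bound and niceness of $\precdot$. The bound is routine: for nonempty $s$ with $|s|=k$ and $x_0 = \max_{<^X}s$, we get $\otp(\precdot_s) = \otp(\precdot^{x_0}_{s\setminus\{x_0\}}) \le \omega_{(n-1)-(k-1)} = \omega_{n-k}$, while $s=\emptyset$ gives $\otp(<^X)\le\omega_n$. The step I expect to require the most care is clause (2) of niceness for nonempty $s$, and I would establish it by chaining two agreements-on-initial-segments. First, niceness of $\precdot^{x_0}$ (induction hypothesis), together with $\dom(\precdot^{x_0}_{s\setminus\{x_0\}}) = \dom(\precdot_s)$, shows that the first $\alpha := \min\{|\dom(\precdot_s)|,\omega\}$ elements of the base order $\prec^{x_0}$ agree in order with the first $\alpha$ elements of $\precdot_s$. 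Second, our deliberate choice of $\prec^{x_0}$ makes its first $\omega$ elements agree with those of $\mathord{<^X}\restriction X_{<^X x_0}$, which is an initial segment of $<^X$; since $\alpha \le |\dom(\precdot_s)| \le |X_{<^X x_0}|$, the first $\alpha$ elements of $\prec^{x_0}$ are exactly the global $<^X$-least $\alpha$ elements. Composing these identifications yields $(\alpha^{<^X},<^X) = (\alpha^{\precdot_s},\precdot_s)$, which is precisely clause (2). This is the content of \cref{Remark: nice reduction} propagated one layer through the recursion, and the only genuine work is the careful bookkeeping of these initial-segment agreements across the two levels.
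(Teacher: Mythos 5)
Your proposal is correct and follows essentially the same route as the paper: set $\precdot_\emptyset=\mathord{<^X}$, replace each initial segment's order by a new well-order of type $\le\omega_{n-1}$ agreeing with $<^X$ on the first $\omega$ elements, and apply the induction hypothesis to these re-ordered segments. The extra verifications you supply (the additive indecomposability point and the chaining of initial-segment agreements for clause (2) of niceness) are left implicit in the paper but are exactly the right bookkeeping.
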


\begin{proof}
     By induction on $n$. We set $\mathord{\precdot_\emptyset}=\mathord{<^X}$. If $n=1$ we are done. Otherwise, since the initial segment $X_{<^X x_0}$ has cardinality at most $\aleph_{n-1}\geq \aleph_1$ for any $x_0\in X$, we fix a well-ordering $<_{x_0}$ of order type at most $\omega_{n-1}$ on each $X_{<^X x_0}$, such that $\omega^{<^X}\cap X_{<^{X} x_0}$ is an initial segment of $(X_{<^{X} x_0},<_{x_0})$ with both $<^X,<_{x_0}$ agreeing on $\omega^{<^X}\cap X_{<^{X} x_0}$. Thus, we have that $\omega^{<_{x_0}}$ is an initial segment of $\omega^{<^X}$. Applying the induction hypothesis to $(X_{<^X x_0},<_{x_0})$ for every $x_0$ gives us nice $(n-1)$-ordering systems $\precdot^{x_0}$ on each of these sets of order type bounded by $(\omega_{n-1},...,\omega_1)$, and so for $s\in [X]^{<n}\setminus\{\emptyset\}$ we define $\mathord{\precdot_s}=\mathord{\precdot^{x_0}_{s\setminus\{x_0\}}}$, where $x_0=\max_{\precdot_\emptyset}s$.
\end{proof}

We will start with a lemma.

\begin{lem}
\label{lem: finiteness condition}
    Suppose $\precdot$ is a nice $n$-ordering system on a well-ordered set $(X,<^X)$. For $s\in[X]^{\leq n}\setminus\{\emptyset\}$ we have that $\predom(\precdot_s)$ is finite if and only if  $s\cap\omega^{<^X}\neq\emptyset$. In this case we have:
    $$\predom(\precdot_s)=\{x\in X:x<^X\min(s\cap\omega^{<^X})\}.$$
\end{lem}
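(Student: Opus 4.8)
The plan is to prove both the equivalence and the explicit formula by induction on $n$, reducing an $n$-ordering system to the $(n-1)$-ordering system $\precdot^{x_0}$ obtained by fixing the $<^X$-maximum $x_0$ of $s$, and transporting pre-domains along this reduction via \Cref{Lemma: two predomains}. First I would dispose of the case $|s|=1$ directly and uniformly in $n$: if $s=\{a\}$ then $\Min_\precdot s = a$, so $\predom(\precdot_{\{a\}}) = \set{x\in \dom(\precdot_\emptyset)}{x\precdot_\emptyset a} = \set{x\in X}{x<^X a}$ since $\precdot_\emptyset = {<^X}$. This set is finite iff $a$ has only finitely many $<^X$-predecessors, i.e. iff $a\in\omega^{<^X}$, i.e. iff $\{a\}\cap\omega^{<^X}\neq\emptyset$; and when $a\in\omega^{<^X}$ we have $\min(\{a\}\cap\omega^{<^X})=a$, matching the asserted formula. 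This also settles the base case $n=1$, where $|s|=1$ is forced.

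For the inductive step with $2\le |s|\le n$, I would set $x_0=\max_{<^X}s$ and $t=s\setminus\{x_0\}$, so that $\emptyset\neq t\subseteq \dom(\precdot_{\{x_0\}})=X_{<^X x_0}$ and $|t|=|s|-1<n$. By \Cref{Lemma: two predomains}, $\predom(\precdot_s)=\predom(\precdot_{t\cup\{x_0\}})=\predom(\precdot^{x_0}_t)$. By \Cref{Remark: nice reduction}, $\precdot^{x_0}$ is a nice $(n-1)$-ordering system on the well-ordered set $(X_{<^X x_0},\precdot_{\{x_0\}})$, so the induction hypothesis applies to it and to $t$: the set $\predom(\precdot^{x_0}_t)$ is finite iff $t\cap\omega^{\precdot_{\{x_0\}}}\neq\emptyset$, and in that case it equals $\set{x\in X_{<^X x_0}}{x\precdot_{\{x_0\}} m'}$, where $m'$ is the $\precdot_{\{x_0\}}$-least element of $t\cap\omega^{\precdot_{\{x_0\}}}$.

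It then remains to translate these statements, phrased in terms of the order $\precdot_{\{x_0\}}$ and its initial $\omega$-segment $\omega^{\precdot_{\{x_0\}}}$, back into $<^X$ and $\omega^{<^X}$; this bookkeeping is the main obstacle. The tool is the niceness of $\precdot$ (see \Cref{Definition: nice}) applied to $\{x_0\}$: with $\alpha=\min\{|X_{<^X x_0}|,\omega\}$, the orders $<^X$ and $\precdot_{\{x_0\}}$ agree on the common initial segment $\alpha^{<^X}=\alpha^{\precdot_{\{x_0\}}}$. Splitting on whether $X_{<^X x_0}$ is infinite (so $\omega^{\precdot_{\{x_0\}}}=\omega^{<^X}$ and $x_0\notin\omega^{<^X}$) or finite (so $\omega^{\precdot_{\{x_0\}}}=X_{<^X x_0}$ and $x_0\in\omega^{<^X}$), one checks uniformly that
\[
t\cap\omega^{\precdot_{\{x_0\}}}=(s\cap\omega^{<^X})\setminus\{x_0\}.
\]
Since $t\neq\emptyset$ and $x_0=\max_{<^X}s$, this gives $t\cap\omega^{\precdot_{\{x_0\}}}\neq\emptyset \iff s\cap\omega^{<^X}\neq\emptyset$, transferring the finiteness criterion. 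When these sets are nonempty, downward closure of $\omega^{<^X}$ forces $\min_{<^X}s\in s\cap\omega^{<^X}$, whence $\min(s\cap\omega^{<^X})=\min_{<^X}s=\min_{<^X}t$; and as $t\cap\omega^{\precdot_{\{x_0\}}}$ lies in the common segment on which the two orders agree, its $\precdot_{\{x_0\}}$-least element $m'$ coincides with its $<^X$-least element, namely $\min_{<^X}t$, so $m'=\min(s\cap\omega^{<^X})$. Finally, because $m'$ lies in the common initial segment, which is $<^X$-downward closed inside $X_{<^X x_0}$ and carries both orders identically, the set $\set{x\in X_{<^X x_0}}{x\precdot_{\{x_0\}} m'}$ equals $\set{x\in X}{x<^X m'}=\set{x\in X}{x<^X \min(s\cap\omega^{<^X})}$, completing the induction.
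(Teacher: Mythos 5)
Your proof is correct and follows essentially the same route as the paper's: induction on $n$, handling $|s|=1$ directly, peeling off $x_0=\max_{\precdot_\emptyset}s$, transporting pre-domains via \cref{Lemma: two predomains}, and using niceness to identify $\omega^{\precdot_{\{x_0\}}}$ with (the relevant part of) $\omega^{<^X}$ so that the induction hypothesis for $\precdot^{x_0}$ translates back to $<^X$. Your write-up is somewhat more explicit than the paper's about the case split on whether $X_{<^X x_0}$ is finite and about why $\min(s\cap\omega^{<^X})=\min((s\setminus\{x_0\})\cap\omega^{<^X})$, but the content is the same.
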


\begin{proof} 
    Fix $s\in[X]^{\leq n}\setminus\{\emptyset\}$. The case of $|s|=1$ is immediate, so assume $|s|>1$ and write $x_0=\max_{\precdot_\emptyset} s$. We also write $k=\min\{|\dom(\precdot_{\{x_0\}})|,\omega\}$.

    We will prove the lemma by induction on $n$, where the case $n=1$ follows from the case $|s|=1$.

    For $n>1$ we have $\predom(\precdot^{x_0}_{s\setminus\{x_0\}})=\predom(\precdot_s)$ by \Cref{Lemma: two predomains}. By the induction hypothesis and since $k^{<^X}=k^{\precdot^{x_0}}$ by niceness, we first of all get that if the former is finite then $\predom(\precdot_s)=\left\{x\in X: x<^X \min((s\setminus\{x_0\})\cap \omega^{<^X})\right\}$ (where here we use that niceness gives $\mathord{<}^X=\mathord{\precdot}_{x_0}$ on $\omega^{<^X}$). We also get that $\predom(\precdot_s)$ is finite if and only if $(s\setminus\{x_0\})\cap \omega^{<^X}\neq\emptyset$. We now finish both parts by noting that if $\predom(\precdot_s)$ is finite then since $|s|\geq 2$, any $y\in s\setminus\{x_0\}$ must satisfy $y\precdot x_0$ by maximality, implying both that $\min((s\setminus\{x_0\}\cap\omega^{<^X})=\min(s\cap \omega^{<^X})$ and that if $s\cap\omega^{<^X}\neq\emptyset$ then $s\setminus\{x_0\}\cap\omega^{<^X}\neq\emptyset$. This finishes the proofs of both parts of the lemma.
\end{proof}

\begin{lem}
\label{Corollary: contains omega}
    In the context of \Cref{lem: finiteness condition}, suppose $\predom(\precdot_s)$ is infinite. Then it contains $\omega^{<^X}$.
\end{lem}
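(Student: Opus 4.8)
The plan is to unwind \Cref{Notation: pre-domain} and observe that $\predom(\precdot_s)$ is, by its very definition, an initial segment of a well-order that is already part of the ordering system. Writing $m = \Min_{\precdot} s$ and $s' = s\setminus\{m\}$ (so $|s'| = |s|-1 < n$, and $m\in\dom(\precdot_{s'})$ by the defining property of $\Min_{\precdot}$), we have $\predom(\precdot_s) = \set{x\in\dom(\precdot_{s'})}{x\precdot_{s'} m}$, which is precisely the set of elements lying $\precdot_{s'}$-below $m$, i.e.\ a proper initial segment of the well-order $\precdot_{s'}$. The strategy is then: (i) show this \emph{infinite} initial segment must contain the first $\omega$ elements of $\precdot_{s'}$, and (ii) use niceness to identify those first $\omega$ elements with $\omega^{<^X}$.

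For step (i), I would record the elementary fact that an infinite initial segment of a well-order, being downward closed and of order type at least $\omega$, contains the element of every finite rank and hence contains the first $\omega$ elements $\omega^{\precdot_{s'}}$. (Here $\dom(\precdot_{s'})$ is itself infinite, since it contains the infinite set $\predom(\precdot_s)$.) For step (ii), I would invoke \Cref{Definition: nice}(2) for the set $s'$: this is legitimate because $|s'| < n$ places $s'$ in the domain of the ordering system (and when $s'=\emptyset$ it is trivial, as then $\precdot_{s'}=\mathord{<^X}$ outright). Since $\dom(\precdot_{s'})$ is infinite we have $\min\{|\dom(\precdot_{s'})|,\omega\}=\omega$, so niceness yields $(\omega^{<^X},<^X)=(\omega^{\precdot_{s'}},\precdot_{s'})$; in particular $\omega^{\precdot_{s'}}=\omega^{<^X}$ as a set.

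Combining the two steps finishes the proof: $\predom(\precdot_s)$ is an infinite initial segment of $\precdot_{s'}$, hence contains $\omega^{\precdot_{s'}}=\omega^{<^X}$, which is exactly the required inclusion $\omega^{<^X}\subseteq\predom(\precdot_s)$. I do not expect a genuine obstacle here; the only point needing care is the bookkeeping that $\predom(\precdot_s)$ is governed by the order $\precdot_{s'}$ of the \emph{shorter} tuple $s'$, so that niceness — a hypothesis on the orders $\precdot_{s'}$ with $|s'|<n$ — is actually at our disposal. It is worth noting the contrast with \Cref{lem: finiteness condition}, which required an induction on $n$ in order to compute $\predom$ explicitly in the finite case; here, knowing merely that $\predom(\precdot_s)$ is infinite, the cleaner route is the direct initial-segment argument above rather than a fresh induction.
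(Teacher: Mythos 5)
Your argument is correct and is essentially the paper's own proof: both identify $\predom(\precdot_s)$ as an infinite initial segment of the well-order $\precdot_{s'}$ for $s'=s\setminus\{\Min_{\precdot}s\}$, note that such a segment contains the first $\omega$ elements of that order, and use niceness to identify these with $\omega^{<^X}$. The extra bookkeeping you supply (that $|s'|<n$ so niceness applies, and the trivial case $s'=\emptyset$) is consistent with, and slightly more explicit than, the paper's version.
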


\begin{proof}
    Write $y=\Min_{\precdot}s$ and $s'=s\setminus\{y\}$. By definition we have that $\predom(\precdot_s)$ is an initial segment of the ordering $\precdot_{s'}$ on the set $\dom(\precdot_{s'})$, which contains $\omega^{<^X}$ as an initial segment of order type $\omega$ by niceness. Since any infinite initial segment of a well-order contains its first $\omega$ elements, the result follows.
\end{proof}

Throughout the rest of this subsection, fix some $2\leq n<\omega$, and a nice $n$-ordering system $\precdot$ on $(\omega_n,<)$ (see \Cref{Definition: nice}) whose order type is bounded by $(\omega_n,...,\omega_1)$ (\cref{def:bound on order type}), where $<$ is the usual ordering on $\omega_n$. Note that in particular we have $\omega=\omega^{<}$.

\begin{nota}
    Define the following (where the second equality follows from \Cref{lem: finiteness condition}): 
    $$\Inf_{\precdot}=\{s\in[\omega_n]^{n}:|\predom(\precdot_s)|\geq \aleph_0\}=\{s\in[\omega_n]^n:s\cap\omega=\emptyset \}.$$
\end{nota}

We will now prove another lemma.

\begin{lem}
\label{countable}
    For any $s\in[\omega_n]^n$ we have $|\predom(\precdot_s)|\leq \aleph _0$. In particular, if $s\in\Inf_{\precdot}$ then $|\predom(\precdot_s)|=\aleph_0$.
\end{lem}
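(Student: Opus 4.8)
The plan is to reduce the claim directly to the order-type bound on $\precdot$ by exhibiting $\predom(\precdot_s)$ as a \emph{proper} initial segment of a well-order indexed by a set of size $n-1$, and then reading off the relevant coordinate of the bounding tuple $(\omega_n,\ldots,\omega_1)$. Concretely, for $s\in[\omega_n]^n$ I set $s' = s\setminus\{\Min_{\precdot} s\}$, so that $|s'| = n-1 < n$ and $\precdot_{s'}$ is genuinely defined, and I recall from \Cref{Notation: pre-domain} that
$$\predom(\precdot_s) = \set{x\in\dom(\precdot_{s'})}{x\precdot_{s'}\Min_{\precdot} s}.$$

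First I would observe that $\Min_{\precdot} s\in\dom(\precdot_{s'})$ by its very definition: it is the unique $y\in s$ with $y\in\dom(\precdot_{s\setminus\{y\}})$, and \Cref{Lemma: inductive maximum} (applied with $k=n-1$) guarantees $s'=s\setminus\{\Min_{\precdot} s\}$ is exactly the set relative to which $\Min_{\precdot} s$ lies in the domain. Hence the displayed set is precisely the collection of elements lying strictly $\precdot_{s'}$-below $\Min_{\precdot} s$ inside $\dom(\precdot_{s'})$; in other words, $\predom(\precdot_s)$ is a proper initial segment of the well-order $\precdot_{s'}$, and its order type equals the $\precdot_{s'}$-rank of $\Min_{\precdot} s$, which is strictly less than $\otp(\precdot_{s'})$.

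Next I would invoke the standing hypothesis that $(\omega_n,\ldots,\omega_1)$ bounds the order type of $\precdot$ (\Cref{def:bound on order type}). Matching the index $|s'|=n-1$ to the final entry of the tuple gives $\otp(\precdot_{s'})\leq\omega_1$. Since a proper initial segment of a well-order of order type $\leq\omega_1$ has order type $<\omega_1$, and every ordinal below $\omega_1$ is countable, I conclude $|\predom(\precdot_s)|\leq\aleph_0$, which is the first assertion. The ``in particular'' clause is then immediate: by definition $\Inf_{\precdot}=\set{s\in[\omega_n]^n}{|\predom(\precdot_s)|\geq\aleph_0}$, so for $s\in\Inf_{\precdot}$ the lower bound combines with the just-established upper bound to give $|\predom(\precdot_s)|=\aleph_0$.

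I do not anticipate a genuine obstacle here, as the argument is a direct unwinding of definitions. The only points requiring care are verifying that $\Min_{\precdot} s\in\dom(\precdot_{s'})$ so that the initial segment is \emph{proper} (this is what forces the strict inequality $\otp(\predom(\precdot_s))<\otp(\precdot_{s'})$ rather than merely $\leq$), and correctly identifying $|s'|=n-1$ with the coordinate $\omega_1$ of the bounding tuple in \Cref{def:bound on order type}.
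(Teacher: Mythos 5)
Your argument is correct and is essentially the paper's own proof: both set $s'=s\setminus\{\Min_{\precdot}s\}$, read off $\otp(\precdot_{s'})\leq\omega_1$ from the bounding tuple at index $|s'|=n-1$, and conclude that the initial segment $\predom(\precdot_s)$ is countable. The extra care you take in checking that $\Min_{\precdot}s\in\dom(\precdot_{s'})$ is fine but not needed beyond what the paper already records.
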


\begin{proof}
    Given $s\in[\omega_n]^{n}$ we write $y=\Min_{\precdot}s$ and $s'=s\setminus\{y\}$. By the assumption on the bound of the order type of $\precdot$ we have $\otp(\precdot_{s'})\leq \omega_1$ hence every  initial segment is at most countable.
\end{proof}

 \begin{nota}   
    Given sets $X,Y,Z$, define a partial function $f:X\times Y\rightharpoonup Z$ and some $y\in Y$, write $f_y:X\rightharpoonup Z$ for the function $f_y(x)=f(x,y)$.
    We will often not distinguish between a partial function and its graph. Because we often deal with partial functions from a product of sets, an element of the domain will be a tuple. In order to simplify notation, when writing an element of the graph of a partial function we will separate the elements of the domain from those of the range by a semicolon. That is, if $f:\prod_{i=1}^n X_i\rightharpoonup Y$, $x_i\in X_i$ and $y\in Y$, we will write $(x_1,...,x_n;y)\in f$ to mean $(x_1,...,x_n)\in\dom(f)$ and $f(x_1,...,x_n)=y$.
\end{nota}

\begin{rem}
    This section of the paper uses forcing. We use the Jerusalem forcing convention, under which $p\leq q$ means that $q$ strengthens $p$.
\end{rem}

\begin{defn}
    Write $\mathcal{Q}$ for the set consisting of the finite partial functions $p:\bigcup_{s\in\Inf_{\precdot}}\predom(\precdot_s)\times\{s\}\rightharpoonup\omega$ such that for every $s\in\Inf_{\precdot}$, the partial function $p\restriction \predom(\precdot_s)\times\{s\}$ is injective. We consider it as a forcing notion by equipping it with the partial ordering of inclusion of partial functions.
\end{defn}




By \Cref{countable} we have that $Q$ is isomorphic to the poset finite partial functions $p:\omega\times\omega_n\rightharpoonup\omega$ which are injective in one coordinate, equipped with the extension ordering. Thus, the following Lemma is standard and extends \cite[Lemma VII.5.4]{Kunen}.

\begin{lem}
\label{ccc}
    The forcing $\mathcal{Q}$ satisfies the countable chain condition.
\end{lem}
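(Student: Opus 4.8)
The plan is to establish the countable chain condition for $\mathcal{Q}$ by a standard $\Delta$-system argument, exploiting the remark immediately preceding the lemma which identifies $\mathcal{Q}$ with finite partial injective functions on one coordinate. First I would fix an uncountable collection $\{p_\xi : \xi < \omega_1\}$ of conditions in $\mathcal{Q}$ and argue that two of them must be compatible. Each $p_\xi$ is a finite partial function, so its domain $\dom(p_\xi)$ is a finite subset of $\bigcup_{s \in \Inf_{\precdot}} \predom(\precdot_s) \times \{s\}$.

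The key step is to apply the $\Delta$-system lemma to the finite sets $\dom(p_\xi)$. Since there are uncountably many of them and each is finite, after passing to an uncountable subfamily I may assume the domains form a $\Delta$-system with some finite root $R$; that is, $\dom(p_\xi) \cap \dom(p_\eta) = R$ for all distinct $\xi, \eta$. Now on the finite root $R$ there are only countably many possible functions into $\omega$ (in fact only finitely many possibilities once one fixes which finitely many values appear, but countably many suffice), so after thinning to a further uncountable subfamily I may assume that all the $p_\xi$ agree on $R$, i.e. $p_\xi \restriction R = p_\eta \restriction R$ for all $\xi, \eta$.

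At this point I would pick any two distinct indices $\xi \neq \eta$ and verify that $p_\xi \cup p_\eta$ is again a condition in $\mathcal{Q}$. It is clearly a finite partial function, and it is well-defined since the two conditions agree on the overlap $R$ of their domains. The one thing to check is that for each fixed $s \in \Inf_{\precdot}$ the restriction $(p_\xi \cup p_\eta) \restriction \predom(\precdot_s) \times \{s\}$ remains injective. This is where I expect the main subtlety to lie: outside the root the domains are disjoint, but injectivity could fail if $p_\xi$ and $p_\eta$ send points in the same fibre $\predom(\precdot_s) \times \{s\}$ to a common value in $\omega$. To rule this out I would add one more thinning step, using that each $p_\xi$ has only finitely many values in $\omega$ on each relevant fibre; by a pressing-down or pigeonhole argument I can arrange that the values taken by $p_\xi$ off the root are disjoint from those taken by $p_\eta$ off the root on each fibre $s$ appearing in the (finite) set of second coordinates of $R$.

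The cleanest route, however, is to lean directly on the identification stated in the remark: since $\mathcal{Q}$ is isomorphic as a poset to the collection of finite partial functions $p : \omega \times \omega_n \rightharpoonup \omega$ that are injective in the first coordinate, the result is literally an instance of the cited standard fact extending \cite[Lemma VII.5.4]{Kunen}, whose proof is exactly the $\Delta$-system argument sketched above. I would therefore present the proof by first invoking this isomorphism, then carrying out the $\Delta$-system refinement followed by the value-disjointness thinning, and finally checking compatibility of $p_\xi \cup p_\eta$. The main obstacle is purely bookkeeping: ensuring the injectivity-in-one-coordinate constraint is preserved in the union, which the two successive thinning steps (on domains via $\Delta$-systems, on ranges via disjointness) are designed precisely to guarantee.
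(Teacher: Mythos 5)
Your overall strategy---passing to the isomorphic poset of finite partial functions $\omega\times\omega_n\rightharpoonup\omega$ injective in one coordinate and running a $\Delta$-system argument---is the intended one; the paper itself gives no details and simply cites the standard fact extending Kunen. However, the step where you handle the injectivity constraint has a genuine gap. First, a conflict between $p_\xi$ and $p_\eta$ can only occur on a fibre $s$ that meets both $\dom(p_\xi)\setminus R$ and $\dom(p_\eta)\setminus R$; such an $s$ need not appear among the second coordinates of the root $R$ at all (e.g.\ when $R=\emptyset$), so restricting your value-disjointness thinning to fibres occurring in $R$ misses exactly the problematic case. Second, the thinning you describe---arranging that the off-root value sets of $p_\xi$ and $p_\eta$ on a common fibre are disjoint for every pair $\xi\neq\eta$---cannot be achieved by pigeonhole: these are finite subsets of $\omega$, and an uncountable family of pairwise disjoint finite subsets of $\omega$ forces all but countably many of them to be empty.

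The missing ingredient is \cref{countable}: each $\predom(\precdot_s)$ for $s\in\Inf_{\precdot}$ is countable. This is what makes the lemma true at all (finite partial injections from $\omega_1$ into $\omega$ do \emph{not} form a ccc poset: the singletons $\{(\alpha;0)\}$ are an uncountable antichain), so any correct proof must invoke it somewhere, and your sketch never does. The clean way to finish after your first two thinning steps is: let $T_\xi$ be the finite set of fibres $s$ meeting $\dom(p_\xi)\setminus R$, apply the $\Delta$-system lemma again to the family $(T_\xi)$, and observe that its root must be empty---otherwise some fixed $s$ would receive uncountably many pairwise distinct off-root domain points (distinct because the domains form a $\Delta$-system with root $R$), contradicting the countability of $\predom(\precdot_s)$. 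Once the off-root fibres of distinct conditions are disjoint, no injectivity conflict can arise and $p_\xi\cup p_\eta$ is a condition.
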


The following is standard as well, and essentially follows from the same argument as \cite[Lemma VII.5.2]{Kunen}. 

\begin{prop}
\label{Proposition: Cohen package}
    Suppose $G\subseteq\mathcal{Q}$ is a generic filter. Write $g=\bigcup G$. Then the following hold in $V[G]$:
    \begin{enumerate}
        \item $g$ is a function on its domain.
        \item $\dom(g)=\bigcup_{s\in\Inf_{\precdot}}\predom(\precdot_s)\times\{s\}$.
        \item For every $s\in\Inf_{\precdot}$, the function $g\restriction \predom(\precdot_s)\times\{s\}$ is injective.
    \end{enumerate}
\end{prop}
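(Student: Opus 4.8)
The plan is to verify the three conclusions by standard genericity arguments, using density of natural sets of conditions in $\mathcal{Q}$. Since $G$ is a filter of finite partial functions ordered by inclusion, $g=\bigcup G$ is automatically a partial function provided any two conditions in $G$ are compatible as functions, which follows from $G$ being a filter: any $p,q\in G$ have a common extension in $G$, and the extension ordering forces $p,q$ to agree wherever both are defined. This gives conclusion (1) directly from the filter property, with no appeal to genericity.

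For conclusion (2), I would show that for each fixed pair $(x,s)$ with $s\in\Inf_{\precdot}$ and $x\in\predom(\precdot_s)$, the set
\[
D_{(x,s)}=\set{p\in\mathcal{Q}}{(x,s)\in\dom(p)}
\]
is dense. Given any $p\in\mathcal{Q}$, if $(x,s)\notin\dom(p)$ we simply extend $p$ by assigning to $(x,s)$ some natural number not yet used by $p$ on the fiber $\predom(\precdot_s)\times\{s\}$; since $p$ is finite this is always possible, and the result is still a legitimate condition (it remains finite and injective on each relevant fiber). By genericity $G$ meets each $D_{(x,s)}$, so $(x,s)\in\dom(g)$; the reverse inclusion $\dom(g)\subseteq\bigcup_{s\in\Inf_{\precdot}}\predom(\precdot_s)\times\{s\}$ is immediate since every condition already has domain inside this set. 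Conclusion (3) is the easiest: injectivity of $g\restriction\predom(\precdot_s)\times\{s\}$ is inherited from the injectivity built into the definition of $\mathcal{Q}$, because if $(x;k),(y;k)\in g$ with $x,y\in\predom(\precdot_s)$ then both data lie in some single condition (again using that $G$ is a filter and picking a common extension), and that condition is injective on the fiber, forcing $x=y$.

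The only genuine content is the density computation for (2), and even that is routine: the key point is that each fiber $\predom(\precdot_s)$ is infinite (indeed of size exactly $\aleph_0$ by \Cref{countable}) so there are always fresh values in $\omega$ available to extend an injective finite partial function on that fiber, and distinct fibers are handled on disjoint pieces of the domain. I do not expect any real obstacle here; the proposition is explicitly flagged as ``standard'' and following the pattern of \cite[Lemma VII.5.2]{Kunen}. If anything requires care, it is only the bookkeeping that conditions range over a union indexed by $s\in\Inf_{\precdot}$ rather than a single product, but \Cref{countable} reduces this to the classical Cohen-style forcing adding injective functions, so the standard arguments transfer verbatim.
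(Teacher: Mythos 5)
Your proof is correct and is exactly the standard density/filter argument that the paper invokes by citing \cite[Lemma VII.5.2]{Kunen} without writing out details: (1) and (3) from the filter property alone, and (2) from density of the sets $D_{(x,s)}$, which works because each fiber's image under a finite condition leaves fresh values in $\omega$ available. No gaps.
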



\begin{rem}
    Note that in $V[G]$, $\precdot$ is still a nice $n$-ordering system on $\omega_n=\omega_n^{V[G]}$.
\end{rem}

\begin{defn}
\label{Definition: forced system}
    Suppose $G\subseteq\mathcal{Q}$ is a generic filter. We define $\precdot^{G}$ to be the $(n+1)$-ordering system on $\omega_n$ which is given by applying \Cref{Corollary: bottom-up} to the orderings $<_s$, $s\in[\omega_n]^n$ which are defined as follows (in $V[G]$):
    \begin{itemize}
        \item If $s\notin\Inf_{\precdot}$ then $\mathord{<}_s=\mathord{<^{\Ord}}\restriction \predom(\precdot_s)$ (in this case $\predom(\precdot_s)$ is an initial segment of $\omega$ by \Cref{lem: finiteness condition}).
        \item If $s\in\Inf_{\precdot}$ then for every $x,y\in \predom(\precdot_s)$ write $x<_s y$ if and only if $g_s(x)<g_s(y)$.
    \end{itemize}
\end{defn}

Our goal is to show the following. Its proof will be provided after \Cref{red,const}

\begin{thm}
\label{fc}
    In $V[G]$, the $(n+1)$-ordering system $\precdot^{G}$ has finite closures.
\end{thm}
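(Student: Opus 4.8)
The plan is to prove the theorem by a density argument in $\mathcal{Q}$, reducing ``finite closures'' to the task of forcing a finite closed superset of a given finite set. First I would record the structural simplification that since $\precdot^{G}$ is an $(n+1)$-ordering system, \Cref{def:closed sets} tests $\precdot^{G}$-closedness of a set $S$ only against subsets $s\subseteq S$ of size $n$, i.e.\ only against the top-level orders $<_s$ built in \Cref{Definition: forced system}; the lower-level orders coming from $\precdot$ play no direct role. Moreover the forcing adds no new ordinals, so any finite subset of $\omega_n$ already lies in $V$, and it therefore suffices to show: for each finite $A\subseteq\omega_n$ and each $p\in\mathcal{Q}$ there are $q\geq p$ and a finite $B\supseteq A$ with $q\Vdash$ ``$B$ is $\precdot^{G}$-closed''. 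Granting this, the set of such $q$ is dense, so by genericity $V[G]$ contains a finite $\precdot^{G}$-closed $B\supseteq A$, whence $\cl(A)\subseteq B$ is finite. I expect this reduction to be (essentially) the content of \Cref{red}.

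The engine of the construction is that, for a ground-model finite $B$, closedness is cheap to force. If $q$ assigns, on each coordinate $s\in[B]^n\cap\Inf_{\precdot}$, the values $g_s$ on $B\cap\predom(\precdot_s)$ bijectively onto an initial segment $\{0,\dots,M_s\}$ of $\omega$ and assigns no other value in that range, then by injectivity every element of $\predom(\precdot_s)\setminus B$ is forced to receive a $g_s$-value exceeding $M_s$; hence no such element is a $<_s$-predecessor of any point of $B\cap\predom(\precdot_s)$, so $B\cap\predom(\precdot_s)$ is a $<_s$-initial segment. For the finite coordinates $s\in[B]^n\setminus\Inf_{\precdot}$ the order $<_s$ is just the ordinal order on an initial segment of $\omega$ (\Cref{lem: finiteness condition}), so the closedness requirement there only forces natural numbers bounded by those already in $B$, contributing finitely many points. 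The construction of $B$ and $q$ --- presumably \Cref{const} --- then starts from $A$, closes off under the finitely many ``dangerous'' coordinates, and fills the required initial segments.

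The crux, and the one place genericity is really used, is controlling the cascade: enlarging $B$ to fill an initial segment on one coordinate creates new size-$n$ subsets of $B$, each demanding its own closedness, threatening an infinite regress. The key point is that $p$ is a finite condition, so it mentions only finitely many elements and coordinates; let $\mathrm{supp}(p)$ be the finite union of all coordinates and all points on which $p$ is defined. Whenever I must fill a gap in an initial segment $\{0,\dots,M_s\}$ I would choose the new witness from $\predom(\precdot_s)\setminus\mathrm{supp}(p)$, which is possible because $\predom(\precdot_s)$ is infinite for $s\in\Inf_{\precdot}$ (\Cref{countable,Corollary: contains omega}) while $\mathrm{supp}(p)$ is finite. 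Then any new size-$n$ set meeting a filler is disjoint from $p$'s support, so $p$ forces nothing on it and it can be closed freely by assigning its $B$-part to an initial segment, without ever forcing a new external predecessor. Consequently the only coordinates that can force genuinely new points into $B$ are the finitely many inside $\mathrm{supp}(p)$, and on each of these $p$ forces at most $M_s{+}1$ predecessors; so the cascade terminates and $B$ stays finite.

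Finally I would check that the resulting $q$ is a legitimate condition --- finite, and injective on each coordinate --- which holds by construction, since distinct coordinates $s$ carry independent copies of $g_s$ and each coordinate is assigned exactly once. The main obstacle is precisely the cascade-termination argument of the previous paragraph: one must organize the bookkeeping so that filling initial segments never re-triggers the finitely many constraints of $p$, and the freshness of fillers, guaranteed by infiniteness of the predomains, is exactly what breaks the potential infinite regress.
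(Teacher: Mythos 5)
Your proposal is correct and matches the paper's argument: the paper's \Cref{red} is precisely your ``engine'' (a condition $q$ whose restriction to each coordinate $s\in\Inf_{\precdot}(B)$ has domain exactly $B\cap\predom(\precdot_s)$ and image an initial segment of $\omega$ forces $B$ to be $\precdot^{G}$-closed), \Cref{const} carries out your construction of the pair $(B,q)$, and the density argument concludes exactly as you describe. The only divergence is in how the cascade is tamed: you take fillers fresh off the support of $p$ so that every newly created coordinate is $p$-free, whereas the paper takes its fillers from $\omega\subseteq\predom(\precdot_s)$ (\Cref{Corollary: contains omega}), so that the new elements of $B$ never create new coordinates in $\Inf_{\precdot}(B)$ at all --- both mechanisms terminate the regress.
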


\begin{nota}
    For $B\subseteq\omega_n$ we write $\Inf_{\precdot}(B)=\Inf_{\precdot}\cap [B]^n$.
\end{nota}

The key observation is the following. 

\begin{prop}
\label{red}
    Suppose $B\in[\omega_n]^{<\omega}$ and $p\in\mathcal{Q}$ satisfy the following:
    \begin{enumerate}
        \item \label{jt1} The set $B\cap \omega$ is an initial segment of $\omega$.
        \item \label{jt2} For all $s\in\Inf_{\precdot}(B)$ we have $\predom(\precdot_s)\cap B=\dom(p_s)$.
        \item \label{jt3} For all $s\in\Inf_{\precdot}(B)$ we have that $\im(p_s)\subseteq\omega$ is an initial segment.
    \end{enumerate}
    Then $p\Vdash B\text{ is $\dot{\precdot^{G}}$-closed}$.
\end{prop}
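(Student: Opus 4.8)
The plan is to work below the condition $p$: fix a generic filter $G \subseteq \mathcal{Q}$ with $p \in G$ and set $g = \bigcup G$, so that $g$ extends $p$ and enjoys the three properties of \Cref{Proposition: Cohen package}. It then suffices to check directly, in $V[G]$, that $B$ is $\precdot^{G}$-closed. Since $\precdot^{G}$ is an $(n+1)$-ordering system, unwinding \cref{def:closed sets} this amounts to: for every $s \in [B]^{n}$ and all $a, b \in \dom(\precdot^{G}_s) = \predom(\precdot_s)$, if $b \in B$ and $a <_s b$, then $a \in B$. I would fix such $s, a, b$ and split according to the two clauses of \Cref{Definition: forced system}, i.e.\ on whether $s \in \Inf_{\precdot}$.

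In the first case, $s \notin \Inf_{\precdot}$, the order $<_s$ is merely the ordinal order on $\predom(\precdot_s)$, which by \Cref{lem: finiteness condition} is an initial segment of $\omega$; in particular $a, b \in \omega$. From $b \in B \cap \omega$, $a < b$, and hypothesis (\ref{jt1}) that $B \cap \omega$ is an initial segment of $\omega$, we conclude $a \in B \cap \omega \subseteq B$. This case uses nothing about the generic.

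The crux is the case $s \in \Inf_{\precdot}$, hence $s \in \Inf_{\precdot}(B)$ as $s \subseteq B$, where $<_s$ is read off $g_s$ by $x <_s y \iff g_s(x) < g_s(y)$. From $b \in B \cap \predom(\precdot_s)$ and hypothesis (\ref{jt2}) we get $b \in \dom(p_s)$, so $g_s(b) = p_s(b) \in \im(p_s)$ since $g$ extends $p$. Now $a <_s b$ gives $g_s(a) < g_s(b)$, and hypothesis (\ref{jt3}), that $\im(p_s)$ is an initial segment of $\omega$, yields $g_s(a) \in \im(p_s)$. Thus $g_s(a) = p_s(a')$ for some $a' \in \dom(p_s)$; as $a' \in \dom(p_s)$ we also have $g_s(a') = p_s(a') = g_s(a)$, and the injectivity of $g \restriction \predom(\precdot_s) \times \{s\}$ from \Cref{Proposition: Cohen package} forces $a = a'$. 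Hence $a \in \dom(p_s) = \predom(\precdot_s) \cap B \subseteq B$, as required.

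The main obstacle — really the only subtle point — is this injectivity-plus-initial-segment argument in the second case: the combination of $\im(p_s)$ being downward closed in $\omega$ with $g_s$ being injective is exactly what forces any point lying $<_s$-below a $p$-decided point $b$ to already be in $\dom(p_s)$, hence in $B$. Everything else is bookkeeping with the definitions of $\predom$, $\Inf_{\precdot}$ and $\precdot^{G}$, together with the standard fact that a forcing statement below $p$ may be verified in a generic extension containing $p$.
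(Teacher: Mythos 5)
Your proof is correct and follows essentially the same route as the paper's: pass to an arbitrary generic $G \ni p$, split on whether $s \in \Inf_{\precdot}$, handle the finite case via hypothesis (\ref{jt1}), and in the infinite case combine (\ref{jt2}), the initial-segment property (\ref{jt3}) of $\im(p_s)$, and the injectivity of $g_s$ from \cref{Proposition: Cohen package} to pull $a$ back into $\dom(p_s)$. The only cosmetic difference is that you argue directly with $g \supseteq p$ where the paper passes through a common extension $r \in G$ of $p$ and a condition deciding $(\beta,t)$; these are the same argument.
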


\begin{proof}
    Let $B,p$ be as above. Let $G\subseteq\mathcal{Q}$ be a generic filter containing $p$ and $\precdot^{G}$ the corresponding $(n+1)$-ordering system as in \Cref{Definition: forced system}. Suppose $t\in[B]^n$, $\alpha,\beta\in\dom(\precdot^G_t)=\predom(\precdot_t)$ and $\alpha\in B$ satisfy $\beta\precdot^G_t\alpha$. Assume first that $t\notin\Inf_{\precdot}$, so $\predom(\precdot_t)\in\omega$ by \Cref{lem: finiteness condition} and the first bullet of \Cref{Definition: forced system}, $\precdot^G_t$ is the usual ordering on this natural number. Thus, by (\ref{jt1}) we have $\beta\in B$. Consider now the case $t\in\Inf_{\precdot}$, meaning $g_t(\beta)<g_t(\alpha)$. Since $\alpha\in \predom(\precdot_t)\cap B$ we have $
    \alpha\in\dom(p_t)$ by (\ref{jt2}), and so (\ref{jt3}) and the injectivity part of \Cref{Proposition: Cohen package} gives us $\beta\in \dom(p_t)$ and hence $\beta\in B$ by (\ref{jt2}). More explicitly, by \cref{Proposition: Cohen package} there is some $q\in G$ such that $(\beta,t) \in \dom(q)$. Additionally, by (\ref{jt2}) there is some $\beta'\in \dom(p_t)$ such that $p_t(\beta')=g_t(\beta)$. Since $G$ is a filter, $p$ and $q$ are compatible, i.e., there is some $r\in G$ containing both. So $p_t(\beta') = g_t(\beta) = r_t(\beta)$ so $\beta = \beta'$ because $r_t$ is injective.
\end{proof}


\begin{prop}[Small edit at end of proof of 3 and change proof of 4]
\label{const}
    Let $A\in[\omega_n]^{<\omega}$ and $p\in\mathcal{Q}$. Then there is some $B\in[\omega_n]^{<\omega}$ and $q\in\mathcal{Q}$ such that the following all hold.
    \begin{enumerate}
        \item \label{J1} $p\leq q$ and $A\subseteq B$.
        \item \label{J2} The set $B\cap\omega$ is an initial segment of $\omega$.
        \item \label{J3} For every $s\in\Inf_{\precdot}$ we have that $\im(q_s)$ is an initial segment of $\omega$.
        \item \label{J4} For every $s\in\Inf_{\precdot}$ we have $\dom(q_s)\subseteq \predom(\precdot_s)\cap B$.
        \item \label{J5} For every $s\in\Inf_{\precdot}(B)$ we have $\dom(q_s)\supseteq \predom(\precdot_s)\cap B$.
    \end{enumerate}
\end{prop}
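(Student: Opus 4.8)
The plan is to decide first what $B$ must contain and then to build $q$ one column at a time. Write $D=\set{x}{(x,s)\in\dom(p)\text{ for some }s}$ for the set of first coordinates occurring in $p$; since $p\le q$ forces $\dom(p)\subseteq\dom(q)$, condition (\ref{J4}) will require $D\subseteq B$. Let $M=\max(\im(p))+1$ (with $M=0$ if $p=\emptyset$) bound all values used by $p$. I would then fix a natural number $m$ large enough that $m\ge M$ and $m>\max((A\cup D)\cap\omega)$, and set $B=A\cup D\cup m$, where the finite ordinal $m$ is the initial segment $\{0,\dots,m-1\}$ of $\omega$. This immediately gives $A\subseteq B$ and $D\subseteq B$, and since every natural number in $A\cup D$ lies below $m$ we get $B\cap\omega=m$, an initial segment of $\omega$, which already settles (\ref{J2}).

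The single ingredient that makes the rest go through is that every infinite predomain is rich in natural numbers: for $s\in\Inf_{\precdot}$ we have $|\predom(\precdot_s)|\ge\aleph_0$, so by \Cref{Corollary: contains omega} $\predom(\precdot_s)\supseteq\omega^{<}=\omega$, whence $\predom(\precdot_s)\cap B\supseteq B\cap\omega=m$. Thus every column that I need to touch has at least $m$ available domain points sitting inside $B\cap\omega$, and these will serve as fillers.

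I would then define $q$ separately on each column $s\in\Inf_{\precdot}$. If $s\in\Inf_{\precdot}(B)$, set $\dom(q_s)=\predom(\precdot_s)\cap B$ (a finite set of some size $k\ge m$) and let $q_s$ be any injection extending $p_s$ onto the initial segment $\{0,\dots,k-1\}$; this is possible because $\im(p_s)\subseteq\{0,\dots,M-1\}\subseteq\{0,\dots,k-1\}$, using $M\le m\le k$. If $s\in\Inf_{\precdot}\setminus\Inf_{\precdot}(B)$ and $p_s\neq\emptyset$, extend $p_s$ to $q_s$ by injecting points of $(B\cap\omega)\setminus\dom(p_s)$ onto the missing values $\{0,\dots,\max(\im(p_s))\}\setminus\im(p_s)$, making $\im(q_s)$ an initial segment; the number of missing values is at most $M\le m$, while at least $m-|\dom(p_s)|$ fillers are available, which suffices. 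For all remaining $s$ (where $p_s=\emptyset$ and $s\notin\Inf_{\precdot}(B)$) set $q_s=\emptyset$. Only finitely many columns are nonempty — those with $p_s\neq\emptyset$, finitely many since $\dom(p)$ is finite, together with the finitely many $s\in[B]^n$ — so $q$ is a finite partial function with injective columns, i.e.\ $q\in\mathcal{Q}$, and by construction $p\le q$.

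Finally I would verify (\ref{J1})--(\ref{J5}): (\ref{J1}) and (\ref{J2}) hold by the choice of $B$ and $q\supseteq p$; (\ref{J3}) because each $\im(q_s)$ was arranged to be an initial segment (or empty); (\ref{J4}) because in every case $\dom(q_s)\subseteq\predom(\precdot_s)\cap B$, as the fillers lie in $B\cap\omega\subseteq\predom(\precdot_s)$; and (\ref{J5}) because for $s\in\Inf_{\precdot}(B)$ we defined $\dom(q_s)$ to be exactly $\predom(\precdot_s)\cap B$. The one point requiring genuine care — and the closest thing to an obstacle — is reconciling the initial-segment requirement (\ref{J3}) with the support restriction (\ref{J4}) while keeping $p\le q$: the values already assigned by $p$ cannot be moved, so the gaps in each $\im(p_s)$ must be filled using only points already inside $B$. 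Choosing $m\ge M$ and exploiting $\predom(\precdot_s)\cap B\supseteq B\cap\omega$ is precisely what guarantees enough such points and makes all the local column extensions simultaneously possible.
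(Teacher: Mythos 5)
Your proof is correct. It relies on exactly the same key fact as the paper's argument, namely \cref{Corollary: contains omega}: every infinite predomain contains $\omega$ as an initial segment, so natural numbers inside $B$ are always available both as fillers for the domain (to plug gaps in $\im(p_s)$ and make it an initial segment) and as guaranteed members of $\predom(\precdot_s)\cap B$. The difference is organizational: the paper builds the pair $(B,q)$ in five successive stages $(B_i,q_i)$, first closing $B\cap\omega$ downward, then repairing images, then enlarging $B$ to absorb the new domain points, and finally saturating the columns indexed by $\Inf_{\precdot}(B)$; your version instead commits to $B$ once and for all by throwing in $A$, all first coordinates of $p$, and a sufficiently long initial segment $m\supseteq\{0,\dots,M-1\}$ of $\omega$, after which each column of $q$ can be completed independently in a single pass. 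This buys a cleaner argument --- in particular it sidesteps the bookkeeping issue that enlarging $B$ at a late stage can enlarge $\Inf_{\precdot}(B)$ and force further column repairs --- at the cost of having to verify up front the counting inequalities ($M\le m\le k$, and at least $m-|\dom(p_s)|$ fillers versus at most $M-|\dom(p_s)|$ missing values) that the staged construction absorbs implicitly. The only loose ends are trivial edge cases (e.g.\ $p=\emptyset$ or $(A\cup D)\cap\omega=\emptyset$, where any $m\ge\max(M,1)$ works), and the tacit use of $\dom(p_s)\subseteq D\subseteq B$ to see that $\predom(\precdot_s)\cap B$ really does contain $\dom(p_s)$ in the $\Inf_{\precdot}(B)$ case; both are immediate.
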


\begin{proof}
    Given a condition $r\in \mathcal{Q}$, write $\Inf_{\precdot}^{r}$ for the set of all $s\in\Inf_{\precdot}$ such that $\im(r_s)\neq\emptyset$. Our proof will, for each $1\leq i\leq 5$, construct $q_i\in \mathcal{Q}$ and $B_i\in[\omega_n]^{<\omega}$ such that the items $(1),...,(i)$ hold, and then $q=q_5$, $B=B_5$ will satisfy the conclusion. Assume without loss of generality that $p$ is not the empty condition. We start by setting $q_1=p$ and $B_1=A$. We now set $q_2=q_1$ and $B_2=B_1\cup \bigcup(B_1\cap \omega)$, so that (\ref{J1}) and (\ref{J2}) both hold.

    To each $s\in\Inf_{\precdot}^{q_2}$ we associate the set $X_s=\max\{\im(q_2)_s\}\setminus\im(q_2)_s$ and write $j_s=|X_s|$. We now write $X_s=\{m^s_0,...,m^s_{j_{s-1}}\}$. Let $\{n^s_0,...n^s_{j_{s-1}}\}$ denote the first $j_s$ elements of $\omega\setminus\dom(q_2)_s$, and define $q_3=q_2\cup\{(n^s_i,s;m^s_i):s\in\Inf_{\precdot}^{q_2},i< j_s\}$ (note that by \Cref{Corollary: contains omega} the elements $n^s_i$ are elements of $\predom(\precdot_s)$) and $B_3=B_2$. By our choice of $m_i$ the functions $q_s$ are still injective, meaning $q_3\in \mathcal{Q}$ and (\ref{J1}) clearly holds. Trivially (\ref{J2}) still holds. By definition of $X_s$ and $q_3$ we have that $\im(q_3)_s=\im(q_2)_s\cup X_s$ is an initial segment of $\omega$, so (\ref{J3}) holds for all $s\in\Inf_{\precdot}^{q_2}=\Inf_{\precdot}^{q_3}$, and so the same holds for all $s\in\Inf_{\precdot}$ (for any $s\notin\Inf_{\precdot}^{q_3}$ we trivially have that $\im(q_3)_s$ is an initial segment of $\omega$).

    To every $s\in\Inf_{\precdot}^{q_3}$ we set $\{a^s_0,...,a^s_{l_s-1}\}=\dom(q_3)_s\setminus B_3$ for $l_s=|\dom(q_3)_s\setminus B_3|$. We now define $q_4=q_3$, $B_4'=B_4\cup\{a^s_i:s\in\Inf_{\precdot}^{q_4},i<l_s\}$ and $B_4=B_4'\cup \bigcup(B_4'\cap\omega)$. We have that (\ref{J2}) holds by definition of $B_4$, and since $q_4=q_3$ we have that (\ref{J1}) and (\ref{J3}) both hold. By choice of $a^s_i$ we have that (\ref{J4}) holds for all $s\in\Inf_{\precdot}^{q_3}$, and so the same holds for all $s\in\Inf_{\precdot}$ (for any $s\notin\Inf_{\precdot}^{q_3}$ we clearly have $\dom(q_3)\subseteq\predom(\precdot_s)\cap B$.

    Finally, for any $s\in\Inf_{\precdot}(B_4)$, write $\{x^s_0,...,x^s_{m_s -1}\}$ for the elements of the set $(\predom(\precdot_s)\cap B_4)\setminus\dom(q_4)_s$ and $\{y^s_0,...,y^s_{m_s -1}\}$ for the first $m_s-1$ elements of $\omega\setminus \im(q_4)_s$, and set $B_5=B_4$ and $q_5=q_4\cup\{(x^s_i,s;y^s_i):s\in\inf_{\precdot}(B),i<m_s-1\}$. We have that (\ref{J1}), (\ref{J2}) hold by induction. For $s\in\Inf_{\precdot}\setminus\Inf_{\precdot}(B_5)$ we have that (\ref{J3}) and (\ref{J4}) hold because $(q_4)_s=(q_5)_s$. and for $s\in\Inf_{\precdot}(B_5)$ by choice of $x^s_i,y^s_i$. To finish, (\ref{J5}) also holds by choice of $x^s_i$.
\end{proof}

\begin{proof}[Proof of \cref{fc}]
    Fix $A\in[\omega_n]^{<\omega}$. We will show that the set $D_A$ of conditions which force that $A$ is contained in a finite $\dot{\precdot^G}$-closed set $B\in[\omega_n]^{<\omega}$ is dense in $\mathcal{Q}$. Fix $p\in\mathcal{Q}$. Let $(B,q)$ be a pair given by applying \cref{const} to $(A,p)$, so in particular it satisfies the conditions of \cref{red}, hence $q$ forces that $B$ is closed. Since $B$ is finite and contains $A$, the result follows.
\end{proof}

\begin{cor}
\label{Corollary: omega_n}
    Given $n<\omega$, it is consistent with ZFC that $\VCcof(\omega,\omega,\omega_n)=n+1$.
\end{cor}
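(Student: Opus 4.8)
The plan is to sandwich $\VCcof(\omega,\omega,\omega_n)$ between a lower bound that already holds in ZFC and a matching upper bound that I will arrange to hold in a suitable forcing extension; since the forcing will be cardinal-preserving and is a set forcing, the extension models ZFC and this establishes the consistency statement. Note that the cases $n=0,1$ are in fact outright theorems of ZFC --- \cref{order} gives $\VCcof(\omega,\omega,\omega)=1$ and \cref{unctbl} gives $\VCcof(\omega,\omega,\omega_1)=2$ --- so the real content lies in the range $2\leq n<\omega$, which is precisely the range for which the forcing of this section has been set up.

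For the lower bound I would observe that no forcing is needed: taking $\theta=\omega$ in \cref{lower1} and using $\omega^{+n}=\omega_n$ gives $\VCcof(n+2,\omega,\omega_n)\geq n+1$, and since $n+2\leq\omega$, \cref{monotonicity} yields $\VCcof(\omega,\omega,\omega_n)\geq\VCcof(n+2,\omega,\omega_n)\geq n+1$. Thus $\VCcof(\omega,\omega,\omega_n)\geq n+1$ holds in every model of ZFC, hence also in the extension.

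For the upper bound, fixing $2\leq n<\omega$, I would start in the ground model from a nice $n$-ordering system $\precdot$ on $(\omega_n,<)$ of order type bounded by $(\omega_n,\ldots,\omega_1)$, which exists by \cref{exists1}. Forcing with $\mathcal{Q}$, which is ccc by \cref{ccc} and hence preserves cardinals (so that $\omega_n$ is computed the same in $V$ and $V[G]$), I pass to $V[G]$ and form the $(n+1)$-ordering system $\precdot^{G}$ of \cref{Definition: forced system}. The crux is \cref{fc}: in $V[G]$ the system $\precdot^{G}$ has finite closures. Granting this, I apply \cref{cor:closures imply bounded VC dimension} with $\lambda=\theta=\kappa=\omega$: since $\precdot^{G}$ is an $(n+1)$-ordering system on $\omega^{+n}=\omega_n$ with $(\omega,\omega)$-closures (i.e.\ finite closures), the family $\Ff$ of its finite $\precdot^{G}$-closed sets is $\omega$-cofinal and has VC-dimension at most $n+1$, witnessing $\VCcof(\omega,\omega,\omega_n)\leq n+1$ in $V[G]$.

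Combining the two bounds gives $\VCcof(\omega,\omega,\omega_n)=n+1$ in $V[G]$, and since $\mathcal{Q}$ is a set forcing (and requires no large cardinal, unlike \cref{prop:measurable}), $V[G]\models\mathrm{ZFC}$, which is exactly the desired consistency. The main obstacle is entirely concentrated in \cref{fc}, the finiteness of closures for the Cohen-generic orders, whose proof rests on the density argument combining \cref{red} and \cref{const}; once that theorem is in hand, the corollary is pure bookkeeping --- matching the lower and upper bounds and verifying that the ccc forcing preserves $\omega_n$.
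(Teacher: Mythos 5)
Your proposal is correct and follows essentially the same route as the paper: force with the ccc poset $\mathcal{Q}$ over a ground model containing a nice $n$-ordering system from \cref{exists1}, invoke \cref{fc} for finite closures of $\precdot^G$, and conclude via \cref{prop:OVC} and \cref{cor:closures imply bounded VC dimension} (which already packages the lower bound from \cref{lower1} and \cref{monotonicity} that you re-derive separately). Your explicit remark that the cases $n=0,1$ are outright ZFC theorems, consistent with the section's standing assumption $2\leq n$, is a small but accurate addition not spelled out in the paper's proof.
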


\begin{proof}
    Let $G\subseteq\mathcal{Q}$ be generic, then $V[G]$ and $V$ have the same cardinals by \cref{ccc} and $\precdot^G$ is an $(n+1)$-ordering system on $\omega_n$ with finite closures by \cref{fc}. The collection of finite $\precdot^G$-closed sets has VC-dimension $n+1$ by \cref{prop:OVC}, so we finish by \Cref{cor:closures imply bounded VC dimension}.
\end{proof}

\begin{que}
    Can the above construction be modified so that the closed $(n+1)$-initial segments of the orderings $\precdot^G$ are themselves $\precdot^G$-closed (as in \Cref{Theorem: closed initial segments})?
\end{que}

\section{Conclusions and questions}

Combining \Cref{monotonicity,lower1,prop:measurable,singular,regular,Corollary: omega_n} gives us the following.

\begin{thm}\label{thm:main}
    Suppose $\lambda\leq\theta\leq\kappa$ are infinite cardinals.
    \begin{enumerate}
        \item \label{enu:infinite VC} For $\theta^{+\omega}\leq \kappa$ we have $\VCcof(\lambda,\theta,\kappa)=\infty$.
        \item \label{enu:regular} For $\theta$ regular uncountable, if $\kappa=\theta^{+n}$ for some $n<\omega$ then $\VCcof(\lambda,\theta,\kappa)=n+1$.
        \item \label{enu:omega} For $\theta=\omega$, if $\kappa=\omega_n$ for some $n<\omega$ then it is consistent that $\VCcof(\lambda,\theta,\kappa)=n+1$ (note that in this case $\lambda=\omega$).
        \item \label{enu:singular} For $\theta$ singular, arbitrary $\kappa$,  and any $\lambda$ satisfying $\cof(\theta)<\lambda$ we have $\VCcof(\lambda,\theta,\kappa)=\infty$.
        \item \label{enu:measurable} Relative to the consistency of a measurable cardinal, it is consistent that there exists an uncountable cardinal $\theta$ of countable cofinality satisfying \\$\VCcof(\omega,\theta,\theta^{+n})=n+1$ 
        for every $n<\omega$. 
    \end{enumerate}
\end{thm}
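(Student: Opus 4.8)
The plan is to obtain all five items purely by bookkeeping: each is a consequence of one previously established result together with the monotonicity of $\VCcof$ (\cref{monotonicity}) and, where a lower bound is needed, \cref{lower1}. No new ordering system or forcing is required. Two elementary observations will be used repeatedly: since every parameter $\lambda$ here is an \emph{infinite} cardinal, the inequality $n+2\leq\lambda$ holds automatically for every $n<\omega$; and since $\lambda$ is a cardinal, the hypothesis $\cof(\theta)<\lambda$ in item~(\ref{enu:singular}) upgrades to $\cof(\theta)^+\leq\lambda$.

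For item~(\ref{enu:infinite VC}), I would fix an arbitrary $n<\omega$. From $\theta^{+\omega}\leq\kappa$ we get $\theta^{+n}<\theta^{+\omega}\leq\kappa$, and from infiniteness of $\lambda$ we get $n+2\leq\lambda$; hence \cref{monotonicity} (applied with $\kappa_0=\theta^{+n}\leq\kappa$, $\lambda_0=n+2\leq\lambda$, $\theta_0=\theta$) gives $\VCcof(n+2,\theta,\theta^{+n})\leq\VCcof(\lambda,\theta,\kappa)$, and \cref{lower1} bounds the left side below by $n+1$. Since $n$ was arbitrary, $\VCcof(\lambda,\theta,\kappa)=\infty$. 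Item~(\ref{enu:regular}) is the same two-sided argument at $\kappa=\theta^{+n}$: the upper bound $\VCcof(\lambda,\theta,\theta^{+n})\leq\VCcof(\theta,\theta,\theta^{+n})=n+1$ comes from \cref{regular} after shrinking the first coordinate via \cref{monotonicity}, and the matching lower bound $\VCcof(\lambda,\theta,\theta^{+n})\geq n+1$ comes exactly as in item~(\ref{enu:infinite VC}).

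The remaining three items are essentially quotations. Item~(\ref{enu:omega}) is \cref{Corollary: omega_n}, once one notes that $\lambda\leq\theta=\omega$ together with $\lambda$ infinite forces $\lambda=\omega$, so the stated $\VCcof(\lambda,\omega,\omega_n)=n+1$ is literally $\VCcof(\omega,\omega,\omega_n)=n+1$. Item~(\ref{enu:measurable}) is verbatim \cref{prop:measurable}. For item~(\ref{enu:singular}), writing $\mu=\cof(\theta)$, \cref{singular} gives $\VCcof(\mu^+,\theta,\theta)=\infty$; since $\mu<\lambda$ yields $\mu^+\leq\lambda$ and $\theta\leq\kappa$, \cref{monotonicity} gives $\VCcof(\mu^+,\theta,\theta)\leq\VCcof(\lambda,\theta,\kappa)$, so the latter is $\infty$ as well.

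There is no genuine obstacle here; the whole theorem is an assembly of earlier results. The only thing requiring care is orienting each application of \cref{monotonicity} correctly (recalling that it makes $\VCcof$ increase when $\lambda$ or $\kappa$ grows and when $\theta$ shrinks) and verifying that infiniteness of the parameters supplies the side conditions $n+2\leq\lambda$ and $\cof(\theta)^+\leq\lambda$ needed to feed into \cref{lower1} and \cref{singular} respectively.
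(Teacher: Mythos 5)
Your proposal is correct and is essentially identical to the paper's own proof, which likewise assembles the theorem from \cref{monotonicity}, \cref{lower1}, \cref{regular}, \cref{singular}, \cref{Corollary: omega_n}, and \cref{prop:measurable} with the same orientations of monotonicity. Your extra care with the side conditions ($n+2\leq\lambda$ and $\cof(\theta)^+\leq\lambda$) is exactly the bookkeeping the paper leaves implicit.
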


\begin{que}
    Is there a model of ZFC and a natural number $n$ for which there is no $n+1$-ordering system on $\omega_n$ with finite closures? More generally, is it consistent that $\VCcof(\omega,\omega,\omega_n)>n+1$ for some $n<\omega$?
\end{que}

\begin{question}
    Can one construct cofinal VC-classes of finite subsets of an infinite set in a way which does not require an ordering system?
\end{question}

\printbibliography

\end{document}